\numberwithin{equation}{section}
\newcommand{\vs}{\mathbf{s}}
\newcommand{\conW}{\mathrm{Conj}(W(E_{10}))}
\newtheorem*{thmm}{Theorem}
\newtheorem*{conj}{Conjecture}
\newtheorem{thm}{Theorem}[section]
\newtheorem{lem}[thm]{Lemma}
\newtheorem{defn}[thm]{Definition}
\newtheorem{cor}[thm]{Corollary}
\newtheorem{rem}[thm]{Remark}
\newtheorem{prop}[thm]{Proposition}
\newcommand{\Max}{\mathrm{Max}}
\newcommand{\Min}{\mathrm{Min}}
\newcommand{\vve}{\mathbf{e}}
\title{Spectral Growth in $W(E_{10})$: \\ Double Coset Filtration and Hilbert Geometry} 
\author{Kyounghee Kim}
\address{Department of Mathematics\\
         Florida State University\\
         Tallahassee, FL 32308}
\email{kkim6@fsu.edu}
\subjclass[2023]{20F55,51F15,20E45,14J50,14E07}
\keywords{Hyperbolic Coxeter group $W(E_{10})$, Spectral radius, Hilbert metric, Tits cone, Salem numbers, Reflection length, Bruhat Order}
\begin{document}
\maketitle

\begin{abstract}
We study the spectral radii of elements in the hyperbolic Coxeter group $W(E_{10})$ by introducing a filtration indexed by reflections conjugate to a distinguished simple reflection $s_0$. This filtration organizes $W(E_{10})$ into double cosets relative to the parabolic subgroup $W(A_9)$, and we classify the minimal representatives of these cosets via a rooted directed acyclic graph (DAG) labeled by triples. Each node in the DAG corresponds to a structured reflection composition, enabling a recursive understanding of spectral growth. Using the Hilbert metric on the Tits cone, we relate spectral radii to geometric displacement and demonstrate an effective method to compute the spectral radii inductively. This provides a geometric and combinatorial framework for understanding the Weyl spectrum of $W(E_{10})$. While our focus is on $E_{10}$, the techniques developed extended naturally to the family $W(E_n)$ for $n\ge 10$, with implications for dynamics on rational surfaces and entropy spectra of surface automorphisms.
\end{abstract}

\section{Introduction}\label{S:intro}
Hyperbolic Coxeter groups form a cornerstone of modern geometric group theory, with deep connections to hyperbolic geometry, Kac-Moody algebras, and arithmetic reflection groups. Since the foundational work of Vinberg~\cite{Vinberg:1971} on hyperbolic reflection groups, these groups have served as central examples in the study of discrete symmetries and tessellations of non-Euclidean spaces. Among them, the Coxeter group $W(E_{10})$ stands out as the unique simply-laced hyperbolic Coxeter group of rank~$10$. It plays a prominent role in both geometric and arithmetic settings, and appears naturally in the spectral dynamics of automorphisms of rational surfaces of Picard rank~$11$~\cite{McMullen:2002,McMullen:2007,Bedford-Kim:2009,dolgachev2008reflection,Kim:2023}.

\medskip
This article introduces a new framework for organizing and analyzing the spectral behavior of elements in $W(E_{10})$, based on a filtration indexed by the distinguished simple reflection $s_0$ at the branching node of the $E_{10}$ Dynkin diagram. Our approach centers on the structure of conjugates of $s_0$, placing it in close proximity to the theory of reflection length. The reflection length in infinite and hyperbolic Coxeter groups has received substantial attention in recent years; see, for example,~\cite{McCammond-Petersen:2011,Duszenko:2012,Drake-Peters:2021,Lotz:2025}.

%
%
%

\medskip
We define the \emph{$s_0$--level} of an element $\omega \in W(E_{10})$, denoted $h_{s_0}(\omega)$, as the minimal number of occurrences of $s_0$ in any reduced word among all conjugates of $\omega$. This statistic induces a natural \emph{filtration by double cosets}:
\[ \mathcal{D}_{n,n} := \{ \omega \in W(E_{10})\ :\  \omega \in W(A_9) \kappa W(A_9)\ \ \text{with } h_{s_0}(\omega) =n\},\] where $\kappa$ is the unique Bruhat-minimal representative of its double coset.

In Section~\ref{S:doubleCosets} and Section~\ref{S:Ktree}, we give a combinatorial classification of these minimal representatives using ordered triples from the set \[T =\{ {i,j,k} \subset \{1,\dots, 10\} \ : 1 \le i<j<k \le 10\}. \] For each $s_0$--level $n$, the corresponding minimal representatives are encoded by an inductively defined finite set $T_n \subset T^n$ of ordered $n$--tuples of triples. This leads to a natural identification of minimal double coset representatives with nodes in a \emph{rooted directed acyclic graph} (DAG), where each node is labeled by a triple in $T$, and directed edges represent the addition of a new triple during the recursive construction.

\begin{thmm}[Triple-Labeled Graph Structure of Minimal Representatives, Theorem~\ref{T:TripleGraph}]
Let $\mathcal{D}_{n,n}$ denote the set of elements in $W(E_{10})$ of $s_0$--level $n$ (as defined in~\eqref{E:Dnn}), and let $K \subset W(E_{10})$ be the set of Bruhat-minimal representatives of double cosets \(W(A_9)\backslash W(E_{10})/W(A_9)\).

Then:
\begin{enumerate}
\item Each $\kappa \in K \cap \mathcal{D}_{n,n}$ is uniquely determined by an ordere $n$--tuple of triples from
 \[T\ :=\ \{\{i,j,k\} \subset \{1,\dots, 10\} \ :\ 1\le i<j<k \le 10\}.\]
\item There exists a rooted directed acyclic graph (DAG) $\mathcal{G}$, whose depth-$n$ nodes correspond bijectively to a finite inductively defined subset $T_n \subset T^n$, such that each directed path of length $n$ encodes the recursive construction of a minimal double coset representative $\kappa \in \mathcal{D}_{n,n}.$
\end{enumerate}
\end{thmm}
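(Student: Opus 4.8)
The plan is to work in the concrete realization of $W(E_{10})$ as the group generated by the symmetric group $W(A_9)=\langle s_1,\dots,s_9\rangle\cong S_{10}$, permuting $e_1,\dots,e_{10}$ on the Picard lattice $\mathbf{Z}^{1,10}$ (with its intersection form and canonical class preserved), together with the single Cremona reflection $s_0$, the reflection in $e_0-e_1-e_2-e_3$. The first step is a normal form: using $W(A_9)=S_{10}$, write any $\omega$ as an alternating word $w_0\,s_0\,w_1\,s_0\cdots s_0\,w_m$ with $w_i\in W(A_9)$, and regroup it as $r_{T_1}r_{T_2}\cdots r_{T_m}\,w_m$, where $r_{T_j}$ is the reflection conjugate to $s_0$ based at the triple $T_j\in T$ obtained by transporting $\{1,2,3\}$ through the accumulated permutation $w_0\cdots w_{j-1}$. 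I would then show that $h_{s_0}(\omega)$ is the least number of Cremona factors attainable over conjugates, and that a Bruhat-minimal double coset representative $\kappa\in K\cap\mathcal{D}_{n,n}$ is realized by exactly $n$ such factors; this reduces part (1) to proving that the triple sequence attached to $\kappa$ is canonical.

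For that canonicity I would read the triples off $\kappa$ through the action on $\mathbf{Z}^{1,10}$. Writing $\kappa(e_0)=d\,e_0-\sum_i m_i e_i$ for the image of the line class, a Noether-type inequality forces the three largest multiplicities to single out a distinguished triple $T$; the corresponding Cremona reflection strictly lowers the degree $d$ and the $s_0$--level. Peeling off this outermost factor and re-minimizing within the double coset yields a representative of level $n-1$, and iterating produces an ordered tuple $(T_1,\dots,T_n)$ that is independent of the reduced word chosen. Imposing a smallest-available-index normalization on the Bruhat-minimal representative removes the residual $S_{10}$ ambiguity, so that $\kappa\mapsto(T_1,\dots,T_n)$ is well defined and injective, giving the assignment asserted in (1).

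To obtain the DAG of part (2) I would make this recursion explicit. Define $T_n\subset T^n$ inductively: a tuple $(T_1,\dots,T_{n-1})\in T_{n-1}$ may be extended by $T_n$ precisely when $T_n$ is \emph{admissible} for the state reached after applying $r_{T_1},\dots,r_{T_{n-1}}$ --- the condition guaranteeing that the extended product is again reduced and double-coset-minimal and that the $s_0$--level strictly increases (in particular $T_n\neq T_{n-1}$, since $r_T^2=\mathrm{id}$, and more generally $T_n$ must meet the accumulated support in the right pattern). The graph $\mathcal{G}$ then has $\bigcup_m T_m$ as nodes, the empty tuple as root, and a directed edge $(T_1,\dots,T_{n-1})\to(T_1,\dots,T_n)$ for each admissible extension; acyclicity is immediate since depth strictly increases along edges, and finiteness of each $T_n$ follows from $|T|=\binom{10}{3}$ being finite. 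Surjectivity onto $K\cap\mathcal{D}_{n,n}$ and bijectivity with $T_n$ follow by matching the recursive peeling of the previous paragraph with the recursive appending here.

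The step I expect to be the main obstacle is the precise formulation and proof of the admissibility condition. The difficulty is that Bruhat-minimality in the double coset and the exact value of $h_{s_0}$ are \emph{global} properties of the infinite Coxeter group $W(E_{10})$, whereas the DAG requires a \emph{local} rule depending only on appending one triple. Bridging this gap means translating the exchange and deletion conditions, together with the combinatorics of minimal-length double coset representatives, into a concrete intersection pattern between $T_n$ and the support of $(T_1,\dots,T_{n-1})$ recorded by the lattice action, and verifying that this local rule neither loses minimality nor drops the level. The same mechanism must secure injectivity, since two distinct admissible tuples cannot be allowed to collapse to the same double coset; I expect this to rest on the canonical max-multiplicity reading above and on the specific geometry of the $E_{10}$ Cremona action rather than on general Coxeter theory alone.
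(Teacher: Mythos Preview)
Your overall architecture --- write elements as products of Cremona-type reflections $\kappa_I$ indexed by triples, then organize these products into a DAG of tuples --- matches the paper. The mechanism, however, diverges in two places.

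For part~(1), you attempt to extract the tuple \emph{canonically} from $\kappa$ by reading the three largest multiplicities of $\kappa(e_0)$ and Noether-peeling, re-minimizing after each peel. The paper does something much softer: since $T$ is finite, only finitely many double cosets $W(A_9)\,\kappa_{I_1}\cdots\kappa_{I_n}\,W(A_9)$ occur at each $s_0$-length $n$, so one simply \emph{chooses} one tuple per coset to form $T_n$ (this is the whole content of Proposition~\ref{P:minimal-kappa}). Uniqueness in~(1) is then a tautology of this choice, not an intrinsic reading. Your Noether approach is more ambitious but runs into the tie problem you already note; worse, your smallest-index tiebreak does not obviously survive the ``re-minimize within the double coset'' step, since re-minimization conjugates by a permutation in $S_{10}$ and can scramble the normalization. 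This is genuinely delicate and unnecessary for the statement as the paper intends it.

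For part~(2), the ``main obstacle'' you flag --- a local admissibility rule for extensions --- is exactly what the paper supplies, and it is more combinatorial than you anticipate. Rather than degree-reading on the lattice, the paper works directly with intersection patterns of the triples. Adjoining a new triple $I$ at the front of $(I_1,\dots,I_{n-1})$ is filtered first by the adjacency constraint $|I\cap I_1|\in\{0,1\}$ (forced by the pairwise relations of Lemma~\ref{L:kappa-relations}: $|I\cap J|\ge 2$ collapses $\kappa_I\kappa_J$ to $s_0$-length $\le 1$), then by a level test coming from relations among three $\kappa$'s (Lemma~\ref{L:three-kappas}: e.g.\ if all pairwise intersections have size~$1$, the product has $s_0$-length $3$ iff the triple intersection is nonempty). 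A final deduplication of double cosets is done computationally by canonicalizing the matrix of $\kappa_{\widehat I}$ under independent left/right $S_{10}$-actions on rows and columns; merges in the DAG record exactly this quotient. So the admissibility you were searching for is local-combinatorial on triples plus a global quotient, not a Noether-type condition on the lattice image of $e_0$.
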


\medskip
This DAG encodes the combinatorial growth of minimal representatives and provides a canonical framework for tracking how new reflections from $s_0$--conjugacy classes are introduced. Each directed path in the graph corresponds to a specific representative $\kappa \in W(E_{10})$ with $s_0$--level $n$, and thus serves as a backbone for understanding the spectral stratification of $W(E_{10})$.

%
%

\medskip
Associated to each level $n$, we define a finite \emph{spectral set}:
\[ \Lambda_n\ :=\ \{ \rho(\omega) \ :\  \omega \in \mathcal{D}_{n,n}\},\] consisting of the spectral radii of elements constructed from $n$ reflections in $s_0$--conjugacy classes. This construction yields a new filtration iof the spectral data of $W(E_{10})$, revealing levelwise structure and exponential growth. 
\medskip

\begin{thmm}[Spectral Monotonicity, Theorem~\ref{T:monotonicity}]
Let $\{\Lambda_k(\widehat I^{(k)})\}_{k\ge1}$ be the sequence of
spectral subsets associated with a directed path in the
$s_0$--growth graph. Then the sequences of minima and maxima are non-decreasing in $s_0$--level $k$:
\[
\min \Lambda_k(\widehat I^{(k)})\;\le\;
\min \Lambda_{k+1}(\widehat I^{(k+1)}), \qquad
\max \Lambda_k(\widehat I^{(k)})\;\le\;
\max \Lambda_{k+1}(\widehat I^{(k+1)})
\quad \text{for all } k \ge 1.
\]
\end{thmm}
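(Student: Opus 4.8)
The plan is to replace each spectral radius by a geometric quantity that transforms controllably along the DAG edges, namely the translation length in the Hilbert metric on the Tits cone. First I would invoke the dictionary established in the geometric sections: every loxodromic $\omega \in W(E_{10})$ acts on the projectivized Tits cone $\Omega$ as a Hilbert isometry whose translation length $\tau(\omega) := \inf_{x \in \Omega} d_\Omega(x,\omega x)$ satisfies $\log \rho(\omega) = \tau(\omega)$, the two leading real eigenvalues being $\rho(\omega)$ and $\rho(\omega)^{-1}$ because $\omega$ preserves the Lorentzian form (with the convention $\tau(\omega)=0$, hence $\rho(\omega)=1$, in the elliptic or parabolic case). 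Since $\rho(u\kappa v) = \rho(\kappa vu)$ by conjugation invariance and cyclicity, and since $W(A_9)$ is finite, the spectral subset attached to the depth-$k$ node is the finite set $\Lambda_k(\widehat I^{(k)}) = \{\, e^{\tau(\kappa_k w)} : w \in W(A_9),\ h_{s_0}(\kappa_k w) = k \,\}$, where $\kappa_k$ is the minimal representative read off the path. The theorem thus reduces to showing that both $\min_w \tau(\kappa_k w)$ and $\max_w \tau(\kappa_k w)$ are non-decreasing when $\kappa_k$ is replaced by its child $\kappa_{k+1}$.

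Next I would unwind the DAG edge $\widehat I^{(k)} \to \widehat I^{(k+1)}$. By the Triple-Labeled Graph Structure theorem this edge appends exactly one triple, so $\kappa_{k+1}$ is obtained from $\kappa_k$ by inserting a single additional reflection $r$ conjugate to $s_0$, raising the $s_0$-level by one. Geometrically $r$ is the reflection in a wall $H_r$ of $\Omega$, and I would record the position of $H_r$ relative to the minimal-displacement locus (the axis, in the loxodromic case) of the level-$k$ elements. The recursive construction should guarantee that $H_r$ is a genuinely new wall, so that no Bruhat cancellation occurs and the passage $\kappa_k \to \kappa_{k+1}$ adds, rather than deletes, a wall crossing; this positivity is precisely what the Bruhat-minimality of $\kappa_{k+1}$ encodes.

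For the maximum I would start from a level-$k$ maximizer $\omega_k = \kappa_k w_\ast$ and produce a level-$(k+1)$ element of no smaller displacement: appending $r$ along the translation direction of $\omega_k$ lengthens the crossed wall-path, and convexity of the Hilbert displacement function $x \mapsto d_\Omega(x,\omega x)$ forces $\tau(\kappa_{k+1} w') \ge \tau(\omega_k)$ for a suitable $w' \in W(A_9)$, giving $\max \Lambda_k \le \max \Lambda_{k+1}$. For the minimum I would argue in reverse: given any $\omega_{k+1} \in \mathcal{D}_{k+1,k+1}$ on the path, deleting its final $s_0$-reflection yields a level-$k$ element $\omega_k$ in the coset of $\kappa_k$, and the monotonicity of the displacement function under removal of a consistently oriented wall gives $\tau(\omega_k) \le \tau(\omega_{k+1})$, whence $\tau(\omega_{k+1}) \ge \min \Lambda_k$; infimizing over $\omega_{k+1}$ yields $\min \Lambda_k \le \min \Lambda_{k+1}$.

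I expect the minimum direction to be the main obstacle. Deleting a reflection from a word is not translation-length monotone in general, so the reduction from level $k+1$ to level $k$ cannot be arbitrary: it must be the canonical one dictated by the DAG, and I must verify that this canonical reduction never increases Hilbert displacement. The crux is therefore a single-wall comparison lemma on the Tits cone, asserting that the new wall $H_r$ supplied by the edge is positioned so that erasing it displaces the minimal-displacement point monotonically; this rests on the Bruhat-minimality of $\kappa_{k+1}$ together with the convexity of the displacement function. Proving this wall-crossing lemma is where the real work lies. Once it is in place, the monotonicity of both extremes follows from the extremal realization and reduction arguments above.
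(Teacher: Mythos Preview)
Your Hilbert-metric framework and the subword-deletion idea are in the same spirit as the paper's argument, but you are working harder than necessary and misidentifying the difficulty. The ``single-wall comparison lemma'' you flag as the crux is already available as McMullen's subword inequality (Proposition~\ref{P:mcmullen} in the paper, from \cite[\S4]{McMullen:2002}): if $\omega'$ is a subword of a reduced expression for $\omega$, then $\rho(\omega')\le\rho(\omega)$, unconditionally. Once one is working with reduced words---which Bruhat-minimality of the $\kappa_{\widehat I}$ guarantees---deleting any block of letters is spectral-radius monotone, and no delicate wall-positioning or convexity argument is needed. So your stated worry that ``deleting a reflection from a word is not translation-length monotone in general'' dissolves in the reduced-word setting.

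More importantly, the paper does not treat the maximum and the minimum separately. It proves the stronger interlacing bound
\[
\max \Lambda_k(\widehat I^{(k)}) \;\le\; \min \Lambda_{k+1}(\widehat I^{(k+1)})
\]
(Proposition~\ref{P:spectral-inequality}), from which both monotonicity claims in the theorem follow at once. The structural step you are missing is Lemma~\ref{L:subwords}: every $\omega\in\mathcal D_{k+1}(\widehat I^{(k+1)})$ is conjugate to an element $\sigma\,\kappa_J\,\omega_k$ written in reduced form, where $\omega_k$ is the level-$k$ \emph{maximizer} (not just some level-$k$ element). McMullen's inequality then gives $\rho(\omega)\ge\rho(\omega_k)=\max\Lambda_k$ for \emph{every} level-$(k{+}1)$ element $\omega$. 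Your deletion argument would only produce $\rho(\omega)\ge\rho(\text{some level-}k\text{ element})\ge\min\Lambda_k$, which suffices for the minimum claim but not for the interlacing; and your separate maximum argument---``appending $r$ along the translation direction'' of the level-$k$ maximizer---is vague as written (why does the result land in the correct double coset $W(A_9)\kappa_{\widehat I^{(k+1)}}W(A_9)$, and why is the resulting expression reduced?) and becomes unnecessary once the interlacing is established.
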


\begin{thmm}[Spectral Exhaustion Theorem, Theorem~\ref{T:exhaustion}] For every $C >1$, there eixsts an integer $N\ge 1$ such that
\[ \{ \rho(\omega) \in \mathbb{R}\ :\ \omega \in W(E_{10}), \ \rho(\omega)\le C\} \subset \bigcup_{n\le N} \Lambda_n. \]
\end{thmm}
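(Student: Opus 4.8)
The plan is to prove the contrapositive in quantitative form: a lower bound of the shape
$\log\rho(\omega)=\tau(\omega)\ge \delta_0\, h_{s_0}(\omega)-O(1)$ for some $\delta_0>0$, from which the statement follows at once by choosing $N$ with $e^{\delta_0 N - O(1)}>C$. First I would record the two structural inputs already available. By Theorem~\ref{T:TripleGraph}, every element of $s_0$--level $n$ lies over one of the finitely many depth--$n$ nodes of the DAG $\mathcal G$, so $\Lambda_n$ is a \emph{finite} union of the nodewise spectral sets $\Lambda_n(\widehat I^{(n)})$; and by Theorem~\ref{T:monotonicity} the nodewise minima are non--decreasing along every directed path. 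The reduction is then that it suffices to show these minima increase without bound along every path, and the cleanest sufficient condition is a \emph{uniform} per--edge multiplicative increment $\min\Lambda_{k+1}(\widehat I^{(k+1)})\ge e^{\delta_0}\min\Lambda_k(\widehat I^{(k)})$. Since every depth--$n$ node is the terminus of a length--$n$ directed path from the root and branching is finite, this gives $\min\Lambda_n\ge e^{(n-1)\delta_0}\min\Lambda_1\to\infty$ for all depth--$n$ nodes simultaneously, hence $m_n:=\min\Lambda_n\to\infty$. (Equivalently, one passes to a single $N$ by applying König's lemma to the ancestor--closed subtree of nodes with $\min\le C$, which monotonicity forces to be finite once the increment is positive.)

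The geometric heart is this per--edge increment, and here I would pass to the Hilbert metric on the Tits cone. Using the identification $\log\rho(\omega)=\tau(\omega)$ of the spectral radius of a loxodromic $\omega$ with its translation length in the Hilbert metric $d_\Omega$ on the projectivized Tits cone $\Omega\subset\mathbf{P}^9$ (which for the Lorentzian group $W(E_{10})$ is the Klein model of $\mathbb{H}^9$), the target becomes a lower bound on translation length in terms of $s_0$--level. The mechanism is that the conjugacy--minimality defining $h_{s_0}$ forces the axis of $\omega$ to transversally cross, within one translation period, at least $n=h_{s_0}(\omega)$ distinct walls lying in the $W(E_{10})$--orbit of $\alpha_0^{\perp}$: these are precisely the $s_0$--type crossings that cannot be cancelled by conjugation, since $h_{s_0}$ is the minimal count over the conjugacy class. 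Appending one triple along a DAG edge appends exactly one such essential crossing, which is the candidate source of the increment.

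The decisive quantitative input is the integrality of the $E_{10}$ root system. Normalizing roots to $\langle\alpha,\alpha\rangle=2$, two $s_0$--conjugate walls $\alpha^{\perp},\beta^{\perp}$ are disjoint in $\mathbb{H}^9$ exactly when $|\langle\alpha,\beta\rangle|>2$, in which case their Hilbert distance $d$ satisfies $\cosh d=|\langle\alpha,\beta\rangle|/2$. Because $\langle\alpha,\beta\rangle\in\mathbb{Z}$, disjointness forces $|\langle\alpha,\beta\rangle|\ge 3$, whence $d\ge\operatorname{arccosh}(3/2)=:\delta_0>0$. Thus any two essential $s_0$--crossings realized by disjoint walls are separated by Hilbert distance at least $\delta_0$, and since a geodesic meeting two disjoint hyperplanes must travel at least the distance between them, each such disjoint gap contributes at least $\delta_0$ to the length of the axis over one period, i.e.\ to $\tau(\omega)$. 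This is exactly the definite separation that makes the increment uniform in $k$ and in the node.

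I expect the main obstacle to be precisely this uniformity, namely guaranteeing that the successive essential crossings are genuinely separated rather than clustered. Two issues must be controlled. First, walls in the arrangement accumulate toward $\partial\Omega$, so I must confine the relevant portion of the axis to a thick region where the integrality gap applies; this should follow from the minimal--representative normal form $\kappa=w_0 s_0 w_1 s_0\cdots s_0 w_n$ together with the fact that the $W(A_9)$--fixed fundamental weight provides a basepoint at definite depth. Second, and more seriously, consecutive essential walls may \emph{intersect} (distance $0$), in which case crossings can bunch with no displacement gained; the argument must therefore use minimality of $h_{s_0}$ to show that at least a definite fraction of the $n$ essential crossings occur across pairwise disjoint walls, converting ``$n$ essential crossings'' into ``$\ge cn$ disjoint--wall gaps'' and hence $\tau(\omega)\ge c\delta_0\, n-O(1)$. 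Making this counting precise is the step I expect to require the most care, and it is where the recursive description of $T_n$ and the per--edge structure of Theorem~\ref{T:TripleGraph} should be leveraged to track how each newly appended triple produces a wall disjoint from its predecessor along the axis.
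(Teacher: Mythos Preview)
Your approach is genuinely different from the paper's, and the gap you yourself flag is real and unresolved. The paper does \emph{not} count wall crossings or use any root-pairing separation argument; instead, its engine is McMullen's subword inequality (Proposition~\ref{P:mcmullen}): if $\omega'$ is a subword of a reduced expression for $\omega$, then $\rho(\omega')\le\rho(\omega)$. Combined with Lemmas~\ref{L:maxE} and~\ref{L:subwords}, this yields Proposition~\ref{P:spectral-inequality}, which says that along any directed path in the DAG one has $\min\Lambda_k(\widehat I^{(k)})\ge \max\Lambda_{k-1}(\widehat I^{(k-1)})$. The divergence $m_n\to\infty$ then follows (the paper is terse here) once one knows the nodewise maxima diverge, e.g.\ via $M_k=M_1^k$ with $M_1>1$ (Lemma~\ref{L:maxM}). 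No hyperbolic wall-separation is needed at all.

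Your plan, by contrast, tries to extract a linear lower bound $\tau(\omega)\ge \delta_0\,h_{s_0}(\omega)-O(1)$ directly from Hilbert geometry. The step that fails is precisely the one you identify: converting ``the axis crosses $n$ walls of $s_0$-type'' into ``the axis has length $\ge cn$.'' A quick computation with $\alpha_I=\vve_0-\sum_{i\in I}\vve_i$ shows $B(\alpha_I,\alpha_J)=|I\cap J|-1\in\{-1,0,1,2\}$, so \emph{every} pair of $W(A_9)$-conjugate $s_0$-walls has $|B|\le 2$ and hence intersects (or is asymptotic) in $\mathbb{H}^9$; your $\delta_0=\operatorname{arccosh}(3/2)$ bound never applies to consecutive triples in a minimal representative. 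More generally, walls crossed consecutively along any gallery share a chamber and therefore intersect, so there is no per-step displacement. To salvage the argument you would need to show that among the $n$ essential $s_0$-walls there are $\Omega(n)$ pairs that are ultraparallel \emph{and} that the axis realizes their separation---this is exactly the ``definite fraction'' claim you leave open, and nothing in the minimality of $h_{s_0}$ or the DAG structure obviously provides it. The paper's route via subwords sidesteps this entirely: it compares spectral radii directly, without ever needing to locate or separate hyperplanes in the Tits cone.
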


We further interpret this filtration geometrically using the Hilbert metric on the Tits cone of $W(E_{10})$, and prove that the spectral envelopes $\Lambda_n$  grow monotonically in $n$. This leads to a level-wise recursive method for computing all spectral radii in $W(E_{10})$ below a given threshold, with special attention to the emergence and density of primitive Salem numbers.

\begin{thmm}[Hilbert Ball Inclusion of Level-$n$ Elements, Theorem~\ref{T:hilbertBall}]
Let $x$ be a point in the interior of the fundamental chamber.
Define $B_n := B_K(x, t_n) \subset \mathcal{C}$ to denote the Hilbert ball of radius.
\[
t_n := \inf\bigl\{\delta(\omega) \;:\; \omega \in \mathcal{D}_{n,n}\bigr\}.
\]
Then for any $\omega \in W(E_{10})$, if $\omega \cdot x \in B_n$,
the $s_0$--level of $\omega$ satisfies $h_{s_0}(\omega) \le n$.
\end{thmm}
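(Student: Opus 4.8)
The plan is to argue by contraposition and reduce the whole statement to a monotonicity property of the threshold sequence $(t_n)_{n\ge 1}$. Write $d_K$ for the Hilbert metric on the Tits cone $\mathcal{C}$, so that $\delta(\omega)=d_K(x,\omega\cdot x)$ and $B_n=\{\,y\in\mathcal{C}: d_K(x,y)<t_n\,\}$. The desired conclusion $h_{s_0}(\omega)\le n$ is logically equivalent to the assertion that every $\omega$ with $h_{s_0}(\omega)\ge n+1$ satisfies $\omega\cdot x\notin B_n$, i.e.\ $\delta(\omega)\ge t_n$. So I would fix $\omega$ with $m:=h_{s_0}(\omega)\ge n+1$; then $\omega\in\mathcal{D}_{m,m}$, and directly from the definition $t_m=\inf\{\delta(\omega'):\omega'\in\mathcal{D}_{m,m}\}$ one gets $\delta(\omega)\ge t_m$. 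Consequently the theorem follows at once from the inequality $t_m\ge t_n$ for all $m\ge n$, and hence from the single-step monotonicity $t_n\le t_{n+1}$, which becomes the sole object of the proof.

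To establish $t_n\le t_{n+1}$ I would use the rooted DAG $\mathcal{G}$ of Theorem~\ref{T:TripleGraph}. Each depth-$(n+1)$ node, corresponding to a minimal double-coset representative $\kappa'\in K\cap\mathcal{D}_{n+1,n+1}$, has a unique parent obtained by deleting its last triple; this parent is a depth-$n$ node giving a minimal representative $\kappa\in K\cap\mathcal{D}_{n,n}$, and the edge records that $\kappa'$ is built from $\kappa$ by adjoining one further $s_0$-conjugate reflection. The key geometric lemma I would prove is the \emph{parent comparison}
\[
d_K(x,\kappa\cdot x)\ \le\ d_K(x,\kappa'\cdot x),
\]
expressing that the newly adjoined reflection wall separates $\kappa\cdot x$ from $\kappa'\cdot x$ with $x$ on the near side, so that $x,\ \kappa\cdot x,\ \kappa'\cdot x$ are aligned in the chamber combinatorics and the Hilbert distance can only increase. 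Granting this, for any minimal $\kappa'$ of level $n+1$ we have $t_n\le\delta(\kappa)\le\delta(\kappa')$; taking the infimum over such $\kappa'$ gives $t_n\le\inf_{\kappa'}\delta(\kappa')$, and combined with the claim that within each double coset $W(A_9)\kappa' W(A_9)$ the minimal Bruhat representative realises the smallest displacement of $x$—so that $t_{n+1}=\inf_{\kappa'}\delta(\kappa')$—this yields $t_n\le t_{n+1}$.

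The main obstacle is precisely these two lemmas: that adjoining an $s_0$-conjugate reflection along the DAG never decreases the Hilbert displacement of the fixed basepoint $x$, and that within a fixed double coset the minimal Bruhat representative realises the smallest displacement of $x$. The difficulty is that $\delta$ depends on the chosen basepoint whereas $h_{s_0}$ is a conjugacy invariant defined by a minimum over conjugates, so the comparison must be carried out intrinsically through the separation of chambers by reflection walls and the convexity of the Hilbert metric on $\mathcal{C}$, rather than by manipulating word length alone. I expect the cleanest implementation to run through a wall-crossing lower bound: the number of walls conjugate to the $s_0$-mirror that separate $x$ from $\omega\cdot x$ is bounded below in terms of $h_{s_0}(\omega)$, and each such crossing contributes a definite amount to $d_K$, with the interior position of $x$ in the fundamental chamber ensuring that the $W(A_9)\times W(A_9)$-action cannot undercut the displacement of the minimal representative.

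Finally, a small subtlety to record is the open-versus-closed convention for $B_n$: the argument above gives $\delta(\omega)\ge t_n$ for every $\omega$ of level $\ge n+1$, which excludes such $\omega$ from the open ball; if $B_n$ is taken closed, one additionally needs strictness $t_n<t_{n+1}$ on the relevant range, or else the boundary sphere must be excluded from the conclusion. As an alternative to the direct geometric route, if the displacement--spectral correspondence of the Hilbert-geometry section identifies $t_n$ with $\log\min\Lambda_n$, then $t_n\le t_{n+1}$ follows from the path-wise Spectral Monotonicity of Theorem~\ref{T:monotonicity}, globalised over the DAG by truncating the directed path that realises $\min\Lambda_{n+1}$ to produce a level-$n$ element of no larger spectral radius.
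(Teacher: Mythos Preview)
There is a genuine gap in your opening reduction. You write ``fix $\omega$ with $m:=h_{s_0}(\omega)\ge n+1$; then $\omega\in\mathcal{D}_{m,m}$,'' but this conflates the $s_0$--\emph{level} with the $s_0$--\emph{length}. By definition $h_{s_0}(\omega)=m$ means only that $\min_{v\in[\omega]}|v|_{s_0}=m$; the element $\omega$ itself may have $|\omega|_{s_0}=n'>m$, in which case $\omega\in\mathcal{D}_{n',m}$, not $\mathcal{D}_{m,m}$. Since $t_m$ is the infimum of $\delta$ over $\mathcal{D}_{m,m}$ alone, the inequality $\delta(\omega)\ge t_m$ does not follow. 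What is in $\mathcal{D}_{m,m}$ is some \emph{conjugate} $\omega'=g\omega g^{-1}$, but $\delta$ is basepoint-dependent and not conjugacy invariant, so $\delta(\omega')\ge t_m$ says nothing directly about $\delta(\omega)$. You yourself flag this tension later (``$\delta$ depends on the chosen basepoint whereas $h_{s_0}$ is a conjugacy invariant''), but it already bites at the very first step, before any of your two lemmas enter. To repair the reduction you would need an additional statement of the form: among all elements with $h_{s_0}=m$, those of minimal $s_0$--length realise the smallest displacement of $x$ --- which is a third unproved lemma of the same flavour as the two you list.

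Beyond this, your two acknowledged ``obstacles'' remain unproved, so the proposal is a sketch rather than a proof; and the alternative route via Theorem~\ref{T:monotonicity} does not close the gap either, since $t_n$ is defined through the displacement $\delta_x$ rather than the translation length $\ell_K=\log\rho$, and the paper only records a two-sided comparison $c_1^{-1}\log\rho\le\delta_x\le c_2\log\rho$ with unspecified constants, from which monotonicity of $\log m_n$ does not transfer to monotonicity of $t_n$. For context, the paper itself does not supply a proof of this theorem: it asserts the nestedness $B_1\subseteq B_2\subseteq\cdots$ in the paragraph preceding the statement and then states the result without argument, so there is no detailed proof in the paper to compare your approach against.
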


\medskip
Our computations reveal the set of Salem numbers that are contained in $\Lambda_n$. We isolated the primitive part $\hat \Lambda_n \subset \Lambda_n$--- the set of Salem numbers whose $k^\text{th}$ root is not a Salem number for $k\ge 2$---which emerges at level $n$, and observe experimentally that their number grows approximately fivefold at each step: 
\[ |\hat \Lambda_n| \approx 5 \cdot |\hat \Lambda_{n-1}|, \qquad \text{ for } n\ge 2.\]

This spectral growth has a precise geometric manifestation. Via McMullen's theory of Hilbert metrics on the Tits cone $\mathcal{C}$, we prove that the level sets $\mathcal{D}_{n,n}$ are confined outside a sequence of \emph{nested Hilbert balls} with a fixed base point $x$ in the fundamental domain 
\[ B_n \ :=\ B_K(x, t_n), \quad t_n \sim \log m_n,\]
where $m_n := \min \Lambda_n$. Thus, the $s_0$--filtration yields a geometric stratification of the Tits cone that reflects the spectral radius of elements and the depth of their reflection complexity.

The computations and structures developed in this work support the emergence of a rich and previously uncharted spectral hierarchy inside the Coxeter group $W(E_{10})$. Guided by the recursive growth of minimal double coset representatives and their interaction with the Hilbert metric on the Tits cone, our data suggest the following:

\begin{conj}[Spectral Growth Conjecture]
Let $\Lambda_n\subset \mathbb{R}_{>1}$ denote the set of spectral radii of $W(E_{10})$-elements arising at $s_0$--level $n$. Then:
\begin{itemize}
\item[(i)] 
The maximal values satisfy the exact relation \[ M_n \;=\;M_1^n, \quad n\ge 1,\]
as proved in Lemma~\ref{L:maxM}. Moreover, the minimal extreme values appear to obey an approximate geometric scaling:
\[  m_n \;\approx\; \delta^{n-2} M_{n-1}, \qquad \text{for }n\ge3,\]
for some $0<\delta<1$. Here ``$\approx$" indicates numerical proximity.
\item[(ii)] The number of new primitive Salem numbers at level $n$, denoted $|\hat \Lambda_n|$, grows exponentially with $n$:
\[ |\hat \Lambda_n|\; \sim \; C \cdot r^n, \qquad \text{with } r \approx 5.\]
\item[(iii)] The maximal spectral gap between consecutive elements in $\hat \Lambda_n$ decays exponentially, and the primitive spectrum becomes increasingly dense as $n\to \infty$.
\end{itemize}
\end{conj}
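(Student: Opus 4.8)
The plan is to reduce all three parts to the interplay between the DAG combinatorics of Theorem~\ref{T:TripleGraph} and the Hilbert-geometric dictionary supplied by Theorem~\ref{T:hilbertBall} and Theorem~\ref{T:monotonicity}. The conceptual backbone is McMullen's identity, valid for any $\omega\in W(E_{10})$ acting as a hyperbolic isometry of the Tits cone $\mathcal{C}$, that the logarithm of the spectral radius equals the Hilbert translation length,
\[ \log\rho(\omega)=\inf_{x\in\mathcal{C}} d_K(x,\omega x)=:\tau(\omega), \]
which coincides with the displacement appearing in Theorem~\ref{T:hilbertBall}. Under this dictionary every statement about $\Lambda_n$, $M_n$, $m_n$ becomes a statement about the translation spectrum of the level-$n$ isometries, which in turn is governed by how triples are concatenated along paths of the DAG. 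Part (i)'s maximal relation $M_n=M_1^n$ is already Lemma~\ref{L:maxM}; its proof exhibits the maximiser as the $n$-fold concatenation of the single most-expanding triple, so that $\tau$ is exactly additive along that extremal path.

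For the minimal relation in (i), I would first invoke Theorem~\ref{T:monotonicity} to guarantee $m_n\le m_{n+1}$ and to reduce the search for $m_n$ to the finitely many minimal representatives $K\cap\mathcal{D}_{n,n}$. The next step is to identify the $\tau$-minimising path in the DAG: I expect it to be the path that appends, at each depth, the triple producing the smallest incremental displacement, and to show via subadditivity of $\tau$ under concatenation that the minimiser at level $n$ factors through the maximal level-$(n-1)$ representative followed by one contracting step of fixed ratio $\delta$. Turning the heuristic $m_n\approx\delta^{n-2}M_{n-1}$ into two-sided bounds is the delicate point: the upper bound should follow from exhibiting an explicit near-optimal word, but the matching lower bound requires ruling out that some non-extremal concatenation undercuts it, for which I would use the nested Hilbert balls $B_n=B_K(x,t_n)$ and the fact that every element of $\mathcal{D}_{n,n}$ lies outside $B_n$.

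For the exponential count in (ii), the plan is to encode the recursive construction of $T_n$ by a finite transfer matrix $A$ on the triple labels of the DAG, so that $|T_n|$ grows like the Perron--Frobenius eigenvalue $r$ of $A$; the empirical $r\approx5$ would then be identified with this eigenvalue. Passing from $|T_n|$ to $|\hat\Lambda_n|$ requires two reductions: first collapsing distinct paths that yield the same spectral radius, and second sieving out the imprimitive Salem numbers, i.e.\ those that are proper powers. I would bound the imprimitive contribution by noting that an imprimitive $\lambda\in\Lambda_n$ is a $k$-th power ($k\ge2$) of a Salem number of size at most $M_n^{1/2}=M_1^{n/2}$, so by multiplicativity $\tau(\omega^k)=k\,\tau(\omega)$ and the Hilbert-ball nesting it is supplied by levels at most $\sim n/2$; hence the imprimitive values number at most $O(r^{n/2})$, exponentially negligible against $r^n$, and the surviving primitive count inherits the growth rate $r$.

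The density statement (iii) I would approach through exhaustion and equidistribution: Theorem~\ref{T:exhaustion} guarantees that every spectral radius below a threshold $C$ appears by some finite level $N(C)$, so the primitive spectrum accumulates throughout the relevant interval, and combining this with the exponential supply $r^n$ of values in each fixed translation-length window forces the maximal gap in $\hat\Lambda_n$ to decay like $r^{-n}$. The main obstacle, and the reason this remains a conjecture, is the arithmetic separation problem underlying (ii)--(iii): the transfer matrix controls the number of \emph{words}, but controlling the number of \emph{distinct} Salem numbers demands showing that algebraic coincidences among the $r^n$ characteristic polynomials are rare and that primitivity persists generically. Neither the exact value of $r$ nor the precise decay exponent in (iii) can be pinned down without resolving these coincidences, which lie genuinely beyond the reach of the Coxeter-combinatorial and Hilbert-geometric machinery developed here.
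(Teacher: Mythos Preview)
The statement you are attempting to prove is explicitly a \emph{conjecture} in the paper; apart from the identity $M_n=M_1^n$ (Lemma~\ref{L:maxM}), the paper offers no proof whatsoever, only numerical evidence from Tables~\ref{T:Lambda_1}--\ref{T:GapDis} and the computed values $|\hat\Lambda_n|=10,37,180,866,4100$. So there is no ``paper's own proof'' to compare against: the author treats the approximate scaling of $m_n$, the exponential growth of $|\hat\Lambda_n|$, and the gap decay as empirical observations awaiting justification. You correctly recognise this in your final paragraph, and in that sense your proposal is appropriately calibrated---it is a heuristic roadmap, not a proof.

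That said, several steps in your outline would not go through even at the heuristic level. In part~(i), your assertion that the level-$n$ minimiser ``factors through the maximal level-$(n-1)$ representative followed by one contracting step of fixed ratio~$\delta$'' has no structural support: the paper's own data show $\tilde m_3<M_2$, with the two extremes realised along \emph{different} branches of the DAG, so there is no reason the minimising path should pass through the level-$(n-1)$ maximiser at all. Subadditivity of $\tau$ gives upper bounds on products, not lower bounds on minima, so it cannot force the factorisation you want. In part~(ii), your transfer-matrix argument bounds $|T_n|$, the number of DAG nodes, but the paper's growth ratios $3.7,\,4.86,\,4.81,\,4.73$ are for $|\hat\Lambda_n|$ and are visibly not converging to a single Perron eigenvalue; the discrepancy is exactly the spectral-coincidence problem you flag at the end, and it is not a secondary issue but the entire content of the conjecture. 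Your imprimitive-sieve estimate is fine but addresses the easy part. In part~(iii), equidistribution of Salem numbers in a window does not follow from exhaustion plus an exponential count; one would additionally need some repulsion or anti-concentration statement for the roots, which is genuinely arithmetic and well beyond the Hilbert-metric tools available.

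In short: there is nothing wrong with submitting a proof sketch for an open conjecture, but you should be explicit throughout that what you have is a plausibility argument, and you should not present steps like the ``factoring through the maximiser'' claim as if they were routine when they are in fact unsupported.
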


These observations suggest that the $s_0$--growth graph encodes not just the combinatorial unfolding of $W(E_{10})$, but also a kind of spectral stratification geometry: new eigenvalues emerge along specific paths, bounded within nested Hilbert balls, and propagate upward in a controlled yet rapidly diversifying manner. We anticipate that this framework will offer new insights into both the representation theory and geometric group theory of hyperbolic Coxeter groups.

\begin{rem}
Kim~\cite{Kim-Salem} investigated the closure (the Weyl spectrum) of the set of spectral radii in $\bigcup_n W(E_n)$, establishing monotonicity with respect to the Coxeter rank $n$. In contrast, the present article fixes the Coxeter rank $n=10$ and studies spectral growth via the $s_0$--level filtration---the minimal number of occurrences of the distinguished reflection $s_0$ in the conjugacy class. 
\end{rem}

\medskip
\noindent\textbf{Connection to Entropy Spectrum of Rational Surface Automorphisms}
Suppose $X$ is a rational surface. By Nagata \cite{Nagata, Nagata2}, an automorphism $f: X\to X$ is a lift of a birational map on $\textbf{P}^2(\mathbb{C})$. Also, by the identification between the intersection product and a symmetric Bilinear form on a Minkowski space, if $f$ has an infinite order, then the induced action $f_*$ on the Picard group of $X$ is an element of the Coxeter group $W(E_n)$ where $n+1$ is the Picard rank of $X$. The topological entropy of $f$ is determined by the spectral radius of $f_*$ as an action on a real vector space:
\[ h_{top}(f) \ =\  \log \rho(f_*|_{{\text{Pic}(X)}}).\]

\medskip
It is known that the logarithm of every spectral radius of $\bigcup_n W(E_n)$ arises as the topological entropy of a rational surface automorphism \cite{Diller:2011, McMullen:2007, Uehara:2010, Kim:2023}. Moreover, the closure of this set--- often called the \emph{Weyl spectrum}--- coincides with the closure of the set of exponentials of topological entropies of rational surface automorphisms \cite{Diller-Favre:2001, blancdynamical}. However, since many elements $\omega \in \bigcup_n W(E_n)$ can share the same spectral radius, identifying which element corresponds to a geometric realization is often nontrivial. 

\medskip
Recent results by Uehara \cite{Uehara:2010} and Kim \cite[Theorem~B]{Kim:2023} show that for each $\omega \in W(E_n)$, there exists a representative $\omega' \in W(E_n)$ obtained by multiplying $\omega$ with elements of the parabolic subgroup $W(A_n) \subset W(E_n)$, such that $\rho(\omega') = \rho(\omega)$ and $\omega'$ is realized as the induced action on the Picard group of a rational surface automorphism $f:X \to X$. In particular, one obtains the identity
\[ S_{10}:=\{h_{top}(f) : f \in \text{Aut}(X),\ \  \text{rank of }\text{Pic}(X) = 11\} = \log \{\rho(\omega) : \omega \in W(E_{10})\}.\]

Our results provide a complete, recursive framework for computing this spectrum. In particular, we determine (recursively) the full set of topological entropies arising from automorphisms of rational surfaces with Picard rank $11$. It is noteworthy that, based on Table~\ref{T:Lambda_1} and Table~\ref{T:Senvelops}, all elements of  $S_{10} \cap (0,0.36) $ appear to arise from quadratic rational surface automorphisms. 

\medskip
\noindent\textbf{Organization.} This article is organized as follows. In Section~\ref{S:coxeter}, we provide a brief review of Coxeter groups and related background relevant to the remainder of the paper. Section~\ref{S:doubleCosets} introduces the notion of $s_0$--length and constructs the Bruhat-minimal representatives for double cosets with respect to the parabolic subgroup $W(A_9)$. In Section~\ref{S:Ktree}, we define the $s_0$--level and describe the associated rooted directed acyclic graph (DAG) that encodes the recursive structure of minimal representatives. Section~\ref{S:Hilbert} presents the main theorems and geometric interpretation using the Hilbert metric, along with supporting numerical results. Appendix~\ref{B:tables} contains a complete list of primitive Salem numbers and their minimal polynomials in $\hat \Lambda_2$, and the first 50 such numbers in $\hat \Lambda_3$. Finally, Appendix~\ref{A:code} includes a ready-to-use SageMath script for generating the ordered triples associated with Bruhat-minimal representatives.

\section{Coxeter group $W(E_{10})$}\label{S:coxeter}

In this section, we briefly summarize the structure and properties of the Coxeter group $W(E_{10})$. While many of these features hold for general Coxeter groups, we focus on those most relevant to our context. For background and comprehensive treatment, see~\cite{BjornerBrenti,Humphreys1990}.

The group $W(E_{10})$ is the simply-laced hyperbolic Coxeter group of type $E_{10}$--a rank-10 indefinite extension of the finite type $E_8$. It is generated by ten involutions $\{s_i : i = 0, 1, \dots, 9\}$, one for each node of the Dynkin diagram in Figure~\ref{F:coxetergraph}, subject to the standard Coxeter relations:
\[
(s_i s_j)^{m_{ij}} = 1,
\]
where $m_{ii} = 1$, $m_{ij} = 3$ if nodes are joined by an edge, and $m_{ij} = 2$ otherwise.

\begin{figure}[h]
\centering
\def\svgwidth{3.1truein}
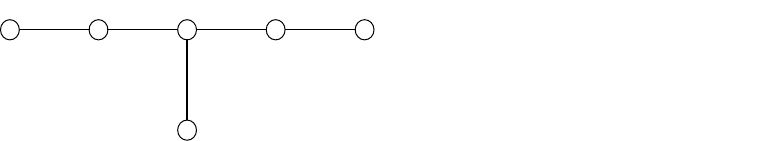
\caption{Dynkin diagram of $E_{10}$}
\label{F:coxetergraph}
\end{figure}

\medskip
Every element $\omega \in W(E_{10})$ can be written as a product of generators:
\[
\omega = g_1 g_2 \cdots g_k, \qquad g_i \in S.
\]
The \emph{length} $\ell(\omega)$ is the minimal such $k$, and the corresponding expression is called a \emph{reduced word} or \emph{reduced expression}. Elements of $W(E_{10})$ generally admit multiple reduced words.

A foundational result in Coxeter theory, due to Tits and Matsumoto, ensures that all reduced expressions for the same element are connected by a sequence of Coxeter (braid) relations.

\begin{thm}[Matsumoto's Theorem {\cite[Theorem~3.3.1]{BjornerBrenti}}]
Let $(W, S)$ be a Coxeter system and let $\omega \in W$.
\begin{enumerate}
\item Any expression $g_1 g_2 \cdots g_q$ with $g_i \in S$ representing $\omega$ can be transformed into a reduced expression by a sequence of \emph{nil-moves} (deleting adjacent squares $s_i^2 = 1$) and Coxeter (braid) relations.
\item Any two reduced expressions for $\omega$ are connected by a sequence of Coxeter relations.
\end{enumerate}
\end{thm}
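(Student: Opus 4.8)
The plan is to prove the two assertions in the logical order (2) then (1), building on the geometric (Tits) representation of $(W,S)$. First I would realize $W$ on the real vector space $V = \bigoplus_{s \in S} \mathbb{R}\,\alpha_s$ equipped with the symmetric bilinear form $B(\alpha_s,\alpha_t) = -\cos(\pi/m_{st})$, letting each $s$ act by the reflection $v \mapsto v - 2B(\alpha_s,v)\,\alpha_s$. From the faithfulness of this representation I would extract the root system $\Phi = W\{\alpha_s : s \in S\}$, its partition $\Phi = \Phi^+ \sqcup \Phi^-$ into positive and negative roots, and the fundamental length formula
\[
\ell(w) \;=\; \#\{\alpha \in \Phi^+ : w\alpha \in \Phi^-\}.
\]
This formula yields the \emph{Exchange Condition}: for a reduced word $u = g_1 \cdots g_p$ and $s \in S$ with $\ell(us) < \ell(u)$, there is an index $k$ with $us = g_1 \cdots \widehat{g_k} \cdots g_p$. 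These are the standard tools I would take as the technical backbone.

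For part (2) I would argue by induction on $\ell(w)$, comparing two reduced words $\mathbf{a} = a_1 \cdots a_\ell$ and $\mathbf{b} = b_1 \cdots b_\ell$ for the same $w$. If $a_1 = b_1 = s$, then $a_2 \cdots a_\ell$ and $b_2 \cdots b_\ell$ are reduced words for $sw$ of shorter length, hence braid-equivalent by the inductive hypothesis, and prepending $s$ finishes this case. The substantive case is $a_1 = s \neq t = b_1$, where the key input is the \emph{dihedral descent lemma}: if $\ell(sw) < \ell(w)$ and $\ell(tw) < \ell(w)$ with $m = m_{st} < \infty$, then $w = \Delta\, v$ with $\ell(w) = m + \ell(v)$, where $\Delta = \underbrace{sts\cdots}_{m} = \underbrace{tst\cdots}_{m}$ is the longest element of $\langle s,t\rangle$. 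Granting this, $a_2 \cdots a_\ell$ is a reduced word for $sw = (\underbrace{ts\cdots}_{m-1})\,v$; by the inductive hypothesis it is braid-equivalent to $(\underbrace{ts\cdots}_{m-1})\,v_{\mathrm{red}}$ for a fixed reduced word $v_{\mathrm{red}}$ of $v$, so $\mathbf{a} \sim (\underbrace{sts\cdots}_{m})\,v_{\mathrm{red}}$, and symmetrically $\mathbf{b} \sim (\underbrace{tst\cdots}_{m})\,v_{\mathrm{red}}$. A single braid relation between the two alternating expressions of $\Delta$ then gives $\mathbf{a} \sim \mathbf{b}$.

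Part (1) I would deduce from part (2) together with the Exchange Condition, by induction on the length $q$ of the given expression $g_1 \cdots g_q$. If it is already reduced there is nothing to do; otherwise let $j$ be minimal with $g_1 \cdots g_j$ non-reduced. Applying the Exchange Condition to the reduced prefix $g_1 \cdots g_{j-1}$ and the generator $g_j$ produces an index $k < j$ for which the intermediate element $x := g_k g_{k+1} \cdots g_{j-1}$ admits the two reduced words $g_k \cdots g_{j-1}$ and $g_{k+1} \cdots g_{j-1}\, g_j$. By part (2) these are braid-equivalent, so I may rewrite the block occupying positions $k,\dots,j-1$ of the original word into $g_{k+1} \cdots g_{j-1}\, g_j$; this places a $g_j$ immediately before the original $g_j$ in position $j$, whereupon a single nil-move cancels the adjacent pair $g_j g_j$ and shortens the word by two. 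Induction then yields a reduced expression reached through nil-moves and braid relations alone.

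The main obstacle is isolating and proving the dihedral descent lemma and, more fundamentally, securing the length formula and Exchange Condition on which it rests; both ultimately rely on faithfulness of the geometric representation and a careful root-theoretic bookkeeping of sign changes. The most delicate point is the hypothesis $m_{st} < \infty$ in the descent lemma: when $m_{st} = \infty$ there is no longest element $\Delta$ and no braid relation to invoke, so I would have to show separately that the configuration $\ell(sw) < \ell(w)$ and $\ell(tw) < \ell(w)$ cannot occur in that case---otherwise the length formula would force an unbounded alternating descent $\ell(\underbrace{\cdots ts}_{k}\,w) = \ell(w) - k$ for every $k$, contradicting $\ell \ge 0$. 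For $W(E_{10})$ this difficulty is vacuous, since every $m_{ij} \in \{2,3\}$ is finite.
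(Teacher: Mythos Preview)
The paper does not supply its own proof of this statement: Matsumoto's Theorem is quoted verbatim from \cite[Theorem~3.3.1]{BjornerBrenti} as background in Section~\ref{S:coxeter}, with no accompanying argument. There is therefore nothing in the paper to compare your proposal against.

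That said, your outline is a correct rendition of the standard proof (essentially the one found in Bj\"orner--Brenti or Humphreys). The strategy of proving (2) first via the dihedral descent lemma and then deducing (1) from (2) together with the Exchange Condition is sound, and your handling of the inductive step in (1)---using Exchange to locate an index $k$, invoking (2) to braid the block $g_k\cdots g_{j-1}$ into $g_{k+1}\cdots g_{j-1}g_j$, and then cancelling the resulting adjacent pair $g_jg_j$---is exactly right. The one place where your exposition is slightly informal is the $m_{st}=\infty$ case: the cleanest way to rule out simultaneous left descents $s,t$ with $m_{st}=\infty$ is to use the parabolic factorization $w=u\cdot v$ with $u\in\langle s,t\rangle$ and $v$ minimal in its coset, observe that both descents must already occur in $u$, and note that in the infinite dihedral group no element has two distinct left descents. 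Your ``unbounded alternating descent'' heuristic points at the same fact but would need this to be made precise.
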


\medskip
Let \(\mathbb{R}^{1,10}\) be a real vector space with orthonormal basis \((\vve_0, \vve_1, \dots, \vve_{10})\), where
\[
\vve_0^2 = 1, \qquad \vve_i^2 = -1 \;\; \text{for } i \ge 1, \qquad \vve_i \cdot \vve_j = 0 \;\; \text{for } i \ne j.
\]
This defines a symmetric bilinear form of signature \((1,10)\) on \(\mathbb{R}^{1,10}\).

Let \(V \subset \mathbb{R}^{1,10}\) be the subspace spanned by the ten simple roots \(\{ \alpha_0, \dots, \alpha_9 \}\), where
\[
\alpha_0 = \vve_0 - \vve_1 - \vve_2 - \vve_3, \qquad \alpha_i = \vve_i - \vve_{i+1} \quad \text{for } i = 1,\dots,9.
\]
This root system defines the \(E_{10}\) Dynkin diagram. The inner products among the roots satisfy
\[
B(\alpha_i, \alpha_j) := -\alpha_i \cdot \alpha_j = 
\begin{cases}
2 & \text{if } i = j, \\
-1 & \text{if } i \ne j \text{ and } i \sim j, \\
0 & \text{otherwise},
\end{cases}
\]
where \(i \sim j\) denotes adjacency in the diagram. Equivalently,
\[
B(\alpha_i, \alpha_j) = -2 \cos \left( \frac{\pi}{m_{ij}} \right).
\]

Each root \(\alpha_i\) defines a reflection \(\vs_i : V \to V\) given by
\[
\vs_i(v) := v - B(v, \alpha_i)\,\alpha_i = v + (v \cdot \alpha_i)\, \alpha_i.
\]
This is the standard reflection across the hyperplane orthogonal to \(\alpha_i\), satisfying \(\vs_i(\alpha_i) = -\alpha_i\) and fixing the orthogonal complement.

\medskip
\noindent
Define a linear representation
\[
\sigma : W(E_{10}) \to \mathrm{GL}(V), \qquad \sigma(s_i) := \vs_i.
\]
This representation is a faithful representation of \(W(E_{10})\) that preserve the bilinear form \(B\); see~\cite[Cor.~5.4]{Humphreys1990}. The action of $W(E_{10})$ on the Lorentzian space $V$ gives rise to the Tits cone, a fundamental geometric domain we will discuss via Hilbert metrics in Section~\ref{S:Hilbert}.

\medskip
\noindent\textbf{Spectral Radius.}
For each \(\omega \in W(E_{10})\), the linear operator \(\sigma(\omega)\) acts on \(V\) by a Lorentzian isometry with matrix representation \(M_\omega\) in the basis \(\{ \vve_0, \vve_1, \dots, \vve_{10} \}\). We define the \emph{spectral radius} of \(\omega\) by
\[
\rho(\omega) \;:=\; \rho(M_\omega),
\]
where \(\rho(M)\) denotes the spectral radius (largest modulus of eigenvalues) of the matrix \(M\).

\subsection{Bruhat Order}\label{SS:bruhat}

Let $(W,S)$ be a Coxeter system, and let
\[
R = \{\, \omega s \omega^{-1} \;:\; \omega \in W,\, s \in S \,\}
\]
denote the set of reflections in $W$.
For $\omega', \omega \in W$, we write
\[
\omega' \rightarrow \omega
\quad \text{if} \quad
\omega = \omega' r \quad \text{for some } r \in R,
\quad \text{and} \quad
\ell(\omega) > \ell(\omega').
\]

The \emph{(strong) Bruhat order} on $W$ is the partial order defined by:
\[
\omega' \le_{\text{Bruhat}} \omega
\quad \Longleftrightarrow \quad
\text{there exists a chain }
\omega'=\omega_0 \rightarrow \omega_1 \rightarrow \cdots \rightarrow \omega_k = \omega.
\]
Equivalently, $\omega' \le_{\text{Bruhat}} \omega$ if every reduced expression for $\omega$ contains a subword that is a reduced expression for $\omega'$.

\medskip
This partial order turns $(W, \le_{\text{Bruhat}})$ into a directed poset known as the \emph{Bruhat poset}, with unique minimal element the identity $1$.

\medskip
Bruhat order is graded by the length function and is compatible with multiplication:
\[
\omega' <_{\text{Bruhat}} \omega
\quad \Rightarrow \quad
\ell(\omega') < \ell(\omega),
\]
and for any $s \in S$,
\[
\omega' \le_{\text{Bruhat}} \omega
\ \text{ and }\
\ell(\omega') = \ell(\omega) - 1
\quad \Rightarrow \quad
\omega' s \le_{\text{Bruhat}} \omega s.
\]

We record a useful technical lemma that arises in spectral and length considerations:

\begin{lem}[{\cite[Lemma~2.2.10]{BjornerBrenti}}]
Suppose $x <_{\text{Bruhat}} x r$ and $y <_{\text{Bruhat}} r y$ for $x,y \in W$ and $r \in R$. Then
\[
xy <_{\text{Bruhat}} x r y.
\]
\end{lem}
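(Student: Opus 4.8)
The plan is to reduce the Bruhat comparison to a single statement about lengths and then run an induction on $\ell(y)$. Since $xry = (xy)\,(y^{-1} r y)$ with $y^{-1} r y \in R$, the elements $xy$ and $xry$ differ by right multiplication by a reflection; hence they are $\le_{\text{Bruhat}}$-comparable, and $xy <_{\text{Bruhat}} xry$ holds if and only if $\ell(xry) > \ell(xy)$. As $y^{-1} r y \ne 1$ forces $xy \ne xry$, it therefore suffices to prove the single inequality $\ell(xry) > \ell(xy)$, which I establish by induction on $\ell(y)$. The base case $y = 1$ is exactly the hypothesis $\ell(xr) > \ell(x)$ coming from $x <_{\text{Bruhat}} xr$.

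For the inductive step, assume $\ell(y) \ge 1$ and pick any left descent $s \in S$ of $y$, writing $y = s y'$ with $\ell(y') = \ell(y) - 1$ (so that $s y' = y$). Put $r' := s r s \in R$. Using $rs = s r'$ and $s y' = y$, a direct computation gives the two identities
\[
xy = (xs)\,y', \qquad xry = (xs)\,r'\,y',
\]
so the desired inequality is precisely the assertion of the lemma for the triple $(xs,\, r',\, y')$, in which the right-hand factor $y'$ is strictly shorter. By the inductive hypothesis it then suffices to verify two conditions: (a) $\ell\bigl((xs)r'\bigr) > \ell(xs)$, which unwinds to $\ell(xrs) > \ell(xs)$ since $(xs)r' = xrs$; and (b) $\ell(r' y') > \ell(y')$. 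Condition (b) always holds: because $r' y' = s\,(ry)$ and $\ell(ry) \ge \ell(y) + 1$, one gets $\ell(r' y') \ge \ell(ry) - 1 \ge \ell(y) > \ell(y')$.

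The crux, and the main obstacle, is condition (a). Splitting into the four cases according to whether $s$ is a right ascent or descent of $x$ and of $xr$, elementary length bookkeeping (using $\ell(xr) > \ell(x)$ together with the parity of $\ell(xr) - \ell(x)$) shows that $\ell(xrs) > \ell(xs)$ holds in every case \emph{except} possibly when $s$ is a right ascent of $x$, a right descent of $xr$, and $x \lessdot xr$ is a covering relation. I expect this boundary case to demand the most care, and I would dispatch it with the lifting property (see~\cite{BjornerBrenti}): from $x <_{\text{Bruhat}} xr$ and the fact that $s$ is a right descent of $xr$ but not of $x$, one concludes $x \le_{\text{Bruhat}} xrs$. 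In the offending case $\ell(xrs) = \ell(xr) - 1 = \ell(x)$, so this inclusion forces $x = xrs$, hence $r = s$. But then $\ell(ry) > \ell(y)$ reads $\ell(sy) > \ell(y)$, contradicting the choice of $s$ as a left descent of $y$. Thus the offending case never arises, condition (a) holds for every left descent $s$, and the induction closes.
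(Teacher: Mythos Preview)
Your proof is correct. The paper itself does not give a proof of this lemma---it is quoted verbatim from Bj\"orner--Brenti with a bare citation---and your induction on $\ell(y)$, reducing the Bruhat comparison to the length inequality $\ell(xry)>\ell(xy)$ and handling the single boundary case via the lifting property, is essentially the argument one finds in that reference.
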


\noindent\textbf{Unique Bruhat-minimal Representative} For $J \subset S$, the subgroup $W_J \subset W$ generated by the set $J$ is called a \emph{parabolic subgroup}. Define the set of minimal left coset representatives by
\[ W^J \;:=\; \{ \omega \in W\ :\ \ell(\omega s)>\ell(\omega) \ \text{for all } s \in J \}.\]
Then every $\omega \in W$ admits a unique factorization \[ \omega = uv, \qquad \text{with  }u \in W^J,\ v \in W_J,\]
satisfying the length additivity 
\[\ell(\omega) = \ell(u)+\ell(v).\] Moreover, each double coset $W_J \omega W_J$ contains a unique representative of minimal length with respect to the Bruhat order (see Deodhar~\cite{Deodhar1987}).

\section{$W(A_9)$-double cosets}\label{S:doubleCosets}

The finite Coxeter group \(W(A_9)\) embeds naturally as a (parabolic) subgroup of the
hyperbolic Weyl group \(W(E_{10})\) by restricting to an appropriate subset of simple reflections.
We regard
\[
W(A_9) \ \cong\ \langle s_1,s_2,\dots,s_9\rangle \ <\  W(E_{10}),
\]
as the stabilizer of the isotropic vector \(\vve_0\) in the standard basis
\(\{\vve_0,\vve_1,\dots,\vve_{10}\}\) of the Lorentzian lattice,
where
\[
\vve_0^2 = 1, \qquad \vve_i^2 = -1 \ (i\ge1), \qquad \vve_i\!\cdot \vve_j = 0 \ (i\ne j).
\]

\medskip
Let
\[
T \;=\; \bigl\{\, \{i,j,k\}\subset\{1,\dots,10\} \;\big|\; 1\le i<j<k\le 10 \,\bigr\}
\]
be the set of 3-subsets of \(\{1,\dots,10\}\).
For \(I=\{i,j,k\}\in T\), let \(\kappa_I\) denote the reflection in the root
\(\vve_0-\vve_i-\vve_j-\vve_k\).

Equivalently, if \(\sigma\in W(A_9)\cong S_{10}\) is any element with
\(\sigma(\{1,2,3\})=\{i,j,k\}\), then
\[
\kappa_{\{i,j,k\}} \;=\; \sigma\, s_0\, \sigma^{-1}.
\]
In particular, every \(\kappa_{\{i,j,k\}}\) is \(W(A_9)\)-conjugate to \(s_0=\kappa_{\{1,2,3\}}\).

\begin{lem}\label{L:kappaDecomposition}
For each element \(\omega \in W(E_{10}) \setminus W(A_9)\), 
there exist a positive integer \(n>0\) and triples 
\(I_1, I_2, \dots, I_n \in T\) such that
\[
\omega \;=\; \rho_L\, \kappa_{I_1}\, \kappa_{I_2}\, \cdots\, \kappa_{I_n}\, \rho_R,
\]
where \(\rho_L, \rho_R \in W(A_9)\).
\end{lem}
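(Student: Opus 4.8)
The plan is to use the key structural fact that $W(A_9) \cong S_{10}$ acts transitively (in the appropriate sense) on the set $T$ of triples via conjugation, together with the fact that $s_0$ together with $W(A_9)$ generates all of $W(E_{10})$. Since every generator $s_i$ for $i \ge 1$ lies in $W(A_9)$, and $s_0 = \kappa_{\{1,2,3\}}$, we have $W(E_{10}) = \langle s_0, s_1, \dots, s_9\rangle = \langle W(A_9), s_0\rangle$. So any $\omega$ admits a reduced word $\omega = g_1 g_2 \cdots g_m$ with each $g_t \in \{s_0, s_1, \dots, s_9\}$, and I would argue by tracking occurrences of $s_0$.

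\medskip
First I would reduce to a normal form. Given a reduced expression for $\omega$, group the letters into maximal blocks separated by occurrences of $s_0$. Concretely, write
\[
\omega \;=\; w_0\, s_0\, w_1\, s_0\, w_2\, \cdots\, s_0\, w_n,
\]
where each $w_t \in W(A_9)$ is a (possibly empty) product of generators from $\{s_1,\dots,s_9\}$ and $n$ is the number of $s_0$ letters appearing. Since $\omega \notin W(A_9)$, at least one $s_0$ occurs, so $n \ge 1$. This factorization already has the right shape once I convert the interior $s_0$'s into conjugate reflections $\kappa_I$.

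\medskip
The main step is a \emph{push} argument: I would rewrite each interior $s_0$ as a reflection $\kappa_I$ by absorbing the adjacent $W(A_9)$-blocks. Using the conjugation identity $\kappa_{\sigma(\{1,2,3\})} = \sigma\, s_0\, \sigma^{-1}$ for $\sigma \in W(A_9)$, I can insert compensating pairs $\sigma^{-1}\sigma = 1$ to convert $s_0\, w_1\, s_0 \cdots$ into a product of $\kappa_{I_t}$'s flanked by $W(A_9)$-elements. Starting from the right, set $\rho_R := w_n$; then the rightmost $s_0$ becomes $\kappa_{I_n} := w_{n-1}^{-1}\!\cdots\, s_0\, \cdots$ after choosing $\sigma_n$ appropriately, and one proceeds leftward collecting the remaining $W(A_9)$-factors into $\rho_L$. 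Formally this is cleanest as an induction on $n$: the base case $n=1$ gives $\omega = w_0\, s_0\, w_1 = (w_0)\,\kappa_{\{1,2,3\}}\,(w_1)$ directly; for the inductive step, apply the hypothesis to the prefix and use that conjugating $s_0$ by any $\sigma \in W(A_9)$ yields some $\kappa_I$ with $I = \sigma(\{1,2,3\}) \in T$, which stays inside $T$ because $S_{10}$ permutes $3$-subsets of $\{1,\dots,10\}$.

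\medskip
\textbf{The hard part} is not the existence of \emph{some} such factorization---that follows almost immediately from $W(E_{10}) = \langle W(A_9), s_0\rangle$---but rather ensuring the triples genuinely land in $T$ and that the bookkeeping of left/right $W(A_9)$-factors is consistent. The subtlety is that the index sets range over $\{1,\dots,10\}$ while the simple reflections only involve $s_1,\dots,s_9$; I must confirm that $W(A_9) \cong S_{10}$ really acts as the full symmetric group on the ten indices $\{1,\dots,10\}$ (so that every $3$-subset $I \in T$ is reachable and each conjugate $\sigma s_0 \sigma^{-1}$ is indeed a reflection in a root $\vve_0 - \vve_i - \vve_j - \vve_k$), which is exactly the identification recorded before the lemma. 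Once transitivity of this action on $T$ is in hand, the rewriting is purely mechanical and the statement follows.
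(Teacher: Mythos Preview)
Your proposal is correct and follows essentially the same approach as the paper: write $\omega$ as an alternating product of $W(A_9)$-blocks and copies of $s_0$, then use the conjugation identity $\sigma s_0 \sigma^{-1}=\kappa_{\sigma(\{1,2,3\})}$ to absorb the interior blocks and convert each $s_0$ into some $\kappa_I$, arguing by induction on the number of $s_0$'s. The paper makes the bookkeeping explicit by setting $\eta_j=\rho_j\rho_{j-1}\cdots\rho_1$ and $I_j=\eta_j^{-1}(\{1,2,3\})$, which is exactly the ``push'' you describe; your worry about transitivity is slightly overstated---what is actually needed (and what you do identify) is only that $W(A_9)$ acts by permutations on $\{1,\dots,10\}$, so that $\sigma^{-1}(\{1,2,3\})$ is again a triple in $T$.
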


\begin{proof}
Since $W(E_{10})$ is generated by $W(A_9)$ together with $s_0$, 
every element $\omega \in W(E_{10}) \setminus W(A_9)$ can be written as
\[
\omega \;=\; \rho_{k+1} s_0 \rho_k s_0 \cdots s_0 \rho_1 s_0 \rho_0,
\qquad \rho_i \in W(A_9),
\]
where $k\ge 0$ denotes the number of occurrences of $s_0$ in the reduced word.

---

If $k=0$, then 
\[
\omega = \rho_1 s_0 \rho_0 = \rho_1\, \kappa_{\{1,2,3\}}\, \rho_0,
\]
which already has the desired form.

For $k=1$, we have
\[
\omega 
= \rho_2 \rho_1 \; (\rho_1^{-1} s_0 \rho_1)\; s_0 \rho_0.
\]
Let $I=\{i_1,i_2,i_3\}:=\rho_1^{-1}(\{1,2,3\})$.
Then $\rho_1^{-1} s_0 \rho_1$ is the reflection through 
\(\alpha_I = e_0 - e_{i_1}-e_{i_2}-e_{i_3}\),
hence $\rho_1^{-1} s_0 \rho_1 = \kappa_I$. 
Therefore
\[
\omega = (\rho_2\rho_1)\, \kappa_I\, s_0 \rho_0.
\]

---

The general case follows by induction on $k$.  
Define $\eta_j = \rho_j \rho_{j-1}\cdots\rho_1$ for $1\le j\le k+1$, 
and set $I_j = \eta_j^{-1}(\{1,2,3\})$. 
Then
\[
\eta_j^{-1} s_0 \eta_j = \kappa_{I_j},
\]
and substituting successively yields
\[
\omega 
= \eta_{k+1}\, (\eta_k^{-1}s_0\eta_k)\cdots(\eta_1^{-1}s_0\eta_1)\, s_0\rho_0
= \eta_{k+1}\, \kappa_{I_k}\cdots\kappa_{I_1}\, s_0\rho_0.
\]
Setting $\rho_L = \eta_{k+1}$ and $\rho_R = \rho_0$ 
gives the desired decomposition.
\end{proof}

\noindent
\textbf{Geometric interpretation.}
The decomposition in Lemma~\ref{L:kappaDecomposition} shows that 
each double coset 
\[
W(A_9)\,\omega\,W(A_9) \subset W(E_{10})\setminus W(A_9)
\]
admits a representative of the form 
\(\kappa_{I_1}\kappa_{I_2}\cdots\kappa_{I_n}\),
where each \(\kappa_{I_j}\) is a reflection through the hyperplane orthogonal to
\(\alpha_{I_j}=\vve_0-\vve_{i_1}-\vve_{i_2}-\vve_{i_3}\),
a $W(A_9)$-conjugate of the simple root \(\alpha_0\).
Geometrically, these double cosets parametrize the relative positions of 
$W(A_9)$-chambers in the Tits cone: composing successive $\kappa_{I_j}$ corresponds
to crossing hyperplanes parallel (under the $W(A_9)$-action) to the distinguished face
$H_0$ of the fundamental chamber.
Thus, each step in the product 
moves a base point deeper into the cone, increasing its 
$s_0$-level (defined in Section~3),
and the $W(A_9)$-double cosets provide a natural stratification of 
$W(E_{10})$ according to this relative height.

\medskip
\noindent
\textbf{$s_0$--length.}
For any element $\omega\in W(E_{10})$, define
\[
|\omega|_{s_0}
\ :=\
\min\Bigl\{\text{number of occurrences of }s_0
\text{ in a reduced expression of }\omega\Bigr\}.
\]
We call $|\omega|_{s_0}$ the \emph{$s_0$--length} of $\omega$.
Because braid relations (e.g.\ $s_0s_3s_0=s_3s_0s_3$) may change the raw
count of $s_0$ in a reduced word, we take the minimum over all
reduced expressions (Matsumoto's theorem ensures that all reduced words
are connected by braid moves).

Since each $\kappa_I$ ($I\in T$) is a $W(A_9)$-conjugate of $s_0$, every
$\kappa_I$ contains exactly one $s_0$ in any reduced expression.
Consequently, if $|\omega|_{s_0}=n$, then there exist triples
$I_1,\dots,I_n\in T$ such that
\[
\omega \ \in\ W(A_9)\,\kappa_{I_1}\kappa_{I_2}\cdots\kappa_{I_n}\,W(A_9),
\]
but
\[
\omega\ \notin\ W(A_9)\,\kappa_{I_1}\kappa_{I_2}\cdots\kappa_{I_j}\,W(A_9)
\qquad\text{for any } j<n.
\]

\medskip
\noindent
\subsection{Representatives of Double Cosets}\label{SS:Dcoset}
To investigate the algebraic structure of the double-coset space
\[
W(A_9)\backslash W(E_{10})/W(A_9),
\]
we begin by examining the relations among the reflections
\(\kappa_I\) for \(I\in T\).

\begin{lem}\label{L:kappa-relations}
Let \(I,J\in T\). Then the products of the corresponding reflections 
\(\kappa_I\) and \(\kappa_J\) satisfy the following relations:
\begin{enumerate}
\item If \(|I\cap J|=3\), then \(\kappa_I\kappa_J = \mathrm{id}\).
\item If \(|I\cap J|=2\), then 
\(\kappa_I\kappa_J =\kappa_I\,\rho= \rho\,\kappa_J,\)
where \(\rho\in W(A_9)\) is the reflection through the root 
\(\vve_i - \vve_j\), with \(i,j\) being the two indices not shared by \(I\) and \(J\).
\item If \(|I\cap J|=1\), then 
\(\kappa_I\kappa_J = \kappa_J\kappa_I\)
and \(|\kappa_I\kappa_J|_{s_0}=2\).
\item If \(|I\cap J|=0\), then 
\(\kappa_I\kappa_J\kappa_I = \kappa_J\kappa_I\kappa_J\)
and \(|\kappa_I\kappa_J|_{s_0}=2\).
\end{enumerate}
\end{lem}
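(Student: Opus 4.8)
The plan is to analyze each case by the combinatorial overlap $|I \cap J|$, translating the group-theoretic relations into concrete computations with the roots $\alpha_I = \ve_0 - \ve_i - \ve_j - \ve_k$ under the bilinear form $B$. The key observation throughout is that $\kappa_I$ is the reflection in $\alpha_I$, so the relation between $\kappa_I$ and $\kappa_J$ is governed by $B(\alpha_I, \alpha_J)$, which depends only on $|I \cap J|$. Since $\alpha_I \cdot \alpha_J = 1 - |I \cap J|$ (using $\ve_0^2 = 1$, $\ve_i^2 = -1$), we get $B(\alpha_I, \alpha_J) = -\alpha_I \cdot \alpha_J = |I \cap J| - 1$, taking values $2, 1, 0, -1$ as $|I \cap J|$ ranges over $3, 2, 1, 0$.

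For (1), when $|I \cap J| = 3$ we have $I = J$, so $\kappa_I = \kappa_J$ and $\kappa_I \kappa_J = \kappa_I^2 = \mathrm{id}$ since each $\kappa_I$ is an involution. For (3), when $|I \cap J| = 1$ the roots are orthogonal ($B(\alpha_I, \alpha_J) = 0$), so the reflections commute; this is the standard fact that reflections in orthogonal roots commute. For (4), when $|I \cap J| = 0$ we have $B(\alpha_I, \alpha_J) = -1$, meaning the roots span an $A_2$ subsystem, giving the braid relation $\kappa_I \kappa_J \kappa_I = \kappa_J \kappa_I \kappa_J$ exactly as $s_i s_j s_i = s_j s_i s_j$ for adjacent simple reflections. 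The $s_0$--length claims in (3) and (4), namely $|\kappa_I \kappa_J|_{s_0} = 2$, require showing that no braid move can reduce the two $s_0$ occurrences; I would argue that since the product lies in the double coset $W(A_9)\kappa_I\kappa_J W(A_9)$ and each $\kappa$ contributes one $s_0$, a reduction to $s_0$--length $1$ would force $\kappa_I\kappa_J$ into a single-$\kappa$ double coset, which can be ruled out by checking the image of $\ve_0$ or by a dimension/isotropic-vector argument.

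The interesting and most delicate case is (2), $|I \cap J| = 2$, where $B(\alpha_I, \alpha_J) = 1$. Here the roots are at angle $\pi/3$ in the positive-definite sense but the claim is that $\kappa_I \kappa_J$ collapses to a product involving a single $\kappa$ and an $A_9$-reflection $\rho$ through $\ve_i - \ve_j$. I would verify this by direct computation: writing $I = \{a, b, i\}$ and $J = \{a, b, j\}$ with shared pair $\{a,b\}$ and unshared indices $i, j$, I would compute $\kappa_I \kappa_J$ acting on the basis vectors $\ve_0, \ve_a, \ve_b, \ve_i, \ve_j$ and confirm it agrees with $\kappa_I \rho$ where $\rho$ is the transposition-type reflection swapping $\ve_i \leftrightarrow \ve_j$. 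Equivalently, I would show $\rho = \kappa_I \kappa_J$ composed appropriately fixes $\ve_0$, placing $\rho$ in the stabilizer $W(A_9)$; the identity $\kappa_J = \kappa_I \rho = \rho \kappa_I$ (hence $\kappa_I\kappa_J = \kappa_I\rho$) follows since conjugating $\kappa_I$ by $\rho$ permutes the unshared index, sending $\alpha_I$ to $\alpha_J$.

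The main obstacle I anticipate is the bookkeeping in case (2): confirming both equalities $\kappa_I\kappa_J = \kappa_I \rho = \rho\kappa_J$ and correctly identifying $\rho$ as the reflection in $\ve_i - \ve_j$ rather than some other $A_9$-root requires careful tracking of signs and the action on the full five-dimensional relevant subspace. The cleanest route is likely to verify the conjugation identity $\rho\,\kappa_I\,\rho = \kappa_J$ at the level of roots (checking $\rho(\alpha_I) = \pm\alpha_J$), from which the stated product formulas follow formally. The $s_0$--length minimality in (3) and (4) is a secondary obstacle, best handled by noting that the $s_0$--level is a double-coset invariant and appealing to the filtration structure established via Lemma~\ref{L:kappaDecomposition}.
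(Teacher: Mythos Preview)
Your approach is different from the paper's and more conceptual. The paper simply normalizes to $I=\{1,2,3\}$, picks one specific $J$ for each value of $|I\cap J|$ (namely $\{1,2,3\}$, $\{1,2,4\}$, $\{1,4,5\}$, $\{4,5,6\}$), and verifies every claimed identity by direct matrix computation on the basis vectors $\vve_0,\dots,\vve_{10}$. Your route---computing $B(\alpha_I,\alpha_J)=|I\cap J|-1$ once and then reading off the relations from standard reflection-group facts (orthogonal roots $\Rightarrow$ commuting reflections in case~(3); $B=-1$ $\Rightarrow$ length-$3$ braid relation in case~(4))---explains \emph{why} the relations hold and avoids case-by-case matrix work. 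Your $\vve_0$-coefficient idea for the $s_0$--length claims is also sharper than the paper's bare ``direct computation'': since $W(A_9)$ fixes $\vve_0$, the coefficient of $\vve_0$ in $\omega(\vve_0)$ is a double-coset invariant, and one checks it equals $3$ and $4$ in cases (3) and (4), ruling out $s_0$--length $\le 1$.

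There is one slip in case~(2). You write ``the identity $\kappa_J=\kappa_I\rho=\rho\kappa_I$'', but a single reflection can never equal a product of two distinct reflections (determinants disagree), so this line is false as written. Your subsequent ``cleanest route'' is the right one: from $\rho(\alpha_I)=\alpha_J$ you get $\rho\kappa_I\rho=\kappa_J$, but by itself this only gives $\rho\kappa_I=\kappa_J\rho$. To obtain the product formula, compute instead $\kappa_I(\alpha_J)=\alpha_J-B(\alpha_J,\alpha_I)\alpha_I=\alpha_J-\alpha_I=\pm(\vve_i-\vve_j)$; hence $\kappa_I\kappa_J\kappa_I=\rho$, i.e.\ $\kappa_I\kappa_J=\rho\,\kappa_I$, and symmetrically $\kappa_I\kappa_J=\kappa_J\,\rho$. (Note this yields $\kappa_I\kappa_J=\rho\kappa_I=\kappa_J\rho$, whereas the printed statement has the factors transposed; either way the essential content---that $\kappa_I\kappa_J$ has $s_0$--length~$1$---is the same.)
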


These relations mirror the local Coxeter structure determined by the 
intersection pattern of the index sets \(I\) and \(J\).  
In particular, the cases \(|I\cap J|=2,1,0\) correspond respectively to 
types \(A_2\), \(A_1\times A_1\), and \(A_2\) within the 
subdiagram of the Coxeter graph generated by 
\(\kappa_I\) and \(\kappa_J\).

\begin{proof}
After relabelling indices if necessary, we may assume \(I=\{1,2,3\}\).
By considering 
\[
J=\{1,2,3\},\quad \{1,2,4\},\quad \{1,4,5\},\quad \text{and}\quad \{4,5,6\},
\]
one obtains representatives for the cases 
\(|I\cap J|=3,2,1,0\), respectively.
A direct computation using the defining action of \(\kappa_I\)
on the basis vectors \(\vve_0,\vve_1,\dots,\vve_{10}\)
confirms the stated relations in each case.
\end{proof}

\begin{cor}\label{C:small-s0}
Let $\omega \in W(E_{10})$. Then:
\begin{enumerate}
\item If $|\omega|_{s_0} = 1$, then $\omega \in W(A_9)\, \kappa_{\{1,2,3\}}\, W(A_9)$.
\item If $|\omega|_{s_0} = 2$, then either 
\[
\omega \in W(A_9)\, \kappa_{\{1,4,5\}} \kappa_{\{1,2,3\}}\, W(A_9)
\quad\text{or}\quad
\omega \in W(A_9)\, \kappa_{\{4,5,6\}} \kappa_{\{1,2,3\}}\, W(A_9).
\]
\end{enumerate}
\end{cor}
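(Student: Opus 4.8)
The plan is to reduce both statements to a claim about the pair of triples indexing a minimal $\kappa$--decomposition of $\omega$: I would use Lemma~\ref{L:kappaDecomposition} to produce such a decomposition, Lemma~\ref{L:kappa-relations} to constrain the intersection pattern of the triples, and finally the transitive action of $W(A_9)\cong S_{10}$ on $3$--subsets of $\{1,\dots,10\}$ to normalize into the stated representatives.

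For part (1), since $|\omega|_{s_0}=1$ the element lies outside $W(A_9)$, so Lemma~\ref{L:kappaDecomposition} together with the definition of $s_0$--length puts $\omega\in W(A_9)\,\kappa_{I_1}\,W(A_9)$ for a single triple $I_1\in T$. As every $\kappa_{I_1}$ is $W(A_9)$--conjugate to $s_0=\kappa_{\{1,2,3\}}$, writing $\kappa_{I_1}=\tau s_0\tau^{-1}$ with $\tau\in W(A_9)$ and absorbing $\tau,\tau^{-1}$ into the flanking copies of $W(A_9)$ immediately gives $W(A_9)\,\kappa_{I_1}\,W(A_9)=W(A_9)\,\kappa_{\{1,2,3\}}\,W(A_9)$.

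For part (2), the same argument yields $\omega\in W(A_9)\,\kappa_{I_1}\kappa_{I_2}\,W(A_9)$ for some $I_1,I_2\in T$, and the key step is to show $|I_1\cap I_2|\in\{0,1\}$. If $|I_1\cap I_2|=3$ then $I_1=I_2$ and $\kappa_{I_1}\kappa_{I_2}=\mathrm{id}$ by Lemma~\ref{L:kappa-relations}(1), forcing $|\omega|_{s_0}=0$; if $|I_1\cap I_2|=2$ then Lemma~\ref{L:kappa-relations}(2) rewrites $\kappa_{I_1}\kappa_{I_2}=\rho\,\kappa_{I_2}$ with $\rho\in W(A_9)$, placing $\omega$ in $W(A_9)\,\kappa_{I_2}\,W(A_9)$ and forcing $|\omega|_{s_0}\le1$. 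Both conclusions contradict $|\omega|_{s_0}=2$, so only $|I_1\cap I_2|=1$ and $|I_1\cap I_2|=0$ remain. To normalize, I note that for any $\sigma\in W(A_9)$ one has $\sigma\,\kappa_{I_1}\kappa_{I_2}\,\sigma^{-1}=\kappa_{\sigma(I_1)}\kappa_{\sigma(I_2)}$, and since $\sigma,\sigma^{-1}\in W(A_9)$ this leaves the double coset unchanged; choosing $\sigma$ to send $(I_1,I_2)$ to $(\{1,4,5\},\{1,2,3\})$ when $|I_1\cap I_2|=1$ and to $(\{4,5,6\},\{1,2,3\})$ when $|I_1\cap I_2|=0$ then yields the two stated alternatives.

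I expect the main technical point to be the transitivity claim used in the normalization: one must check that a single permutation of $\{1,\dots,10\}$ can simultaneously match the shared index, the two private indices of $I_1$, and the two private indices of $I_2$ to the target configuration. This is a routine counting verification---since $|I_1\cup I_2|\le 6$ there are at least four free indices to absorb the rest---but it is the one place where the combinatorics of $S_{10}$ must be made explicit. The conceptual heart of the argument is instead the dichotomy in part (2): Lemma~\ref{L:kappa-relations} is precisely what collapses the patterns $|I_1\cap I_2|\ge 2$ to strictly smaller $s_0$--level, leaving exactly the two surviving intersection sizes.
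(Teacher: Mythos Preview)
Your proposal is correct and follows essentially the same approach as the paper's proof, which simply notes that each $\kappa_I$ is $W(A_9)$--conjugate to $s_0$ and that the $|\omega|_{s_0}=2$ case follows from Lemma~\ref{L:kappa-relations} after relabelling indices via a permutation in $S_{10}$. You have spelled out in more detail exactly the steps the paper leaves implicit: the explicit elimination of $|I_1\cap I_2|\ge 2$ via parts (1)--(2) of Lemma~\ref{L:kappa-relations}, and the normalization via the transitive $S_{10}$--action on ordered pairs of triples with prescribed intersection size.
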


\begin{proof}
As noted earlier, for each $I \in T$, $\kappa_I$ is $W(A_9)$-conjugate to $s_0$.  
If $|\omega|_{s_0}=2$, then after relabelling the indices (via a permutation in $S_{10}$),
the conclusion follows immediately from Lemma~\ref{L:kappa-relations}.
\end{proof}

Another interesting relation arises when we consider three $\kappa$'s.

\begin{lem}\label{L:three-kappas}
Let $I,J,K \in T$ satisfy
\[
|I\cap J| = |J\cap K| = |K\cap I| = 1.
\]
Then the products of the corresponding reflections $\kappa_I$, $\kappa_J$, and $\kappa_K$
satisfy:
\begin{enumerate}
\item If \( |I\cap J\cap K| = 1 \), then \( |\kappa_I \kappa_J \kappa_K|_{s_0} = 3. \)
\item If \( |I\cap J\cap K| = 0 \), then \( |\kappa_I \kappa_J \kappa_K|_{s_0} = 2. \)
\end{enumerate}
\end{lem}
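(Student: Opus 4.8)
The plan is to reduce to two normal forms under the $W(A_9)\cong S_{10}$ relabeling action and then separate the two cases with a single double-coset invariant extracted from the action on $\mathbf{e}_0$. The structural fact underlying both cases is that when $|I\cap J|=|J\cap K|=|K\cap I|=1$, the roots $\alpha_I,\alpha_J,\alpha_K$ are mutually orthogonal (each pairwise product being $1-1=0$), so by Lemma~\ref{L:kappa-relations}(3) the three reflections commute and $\omega:=\kappa_I\kappa_J\kappa_K$ is an involution with $\omega(v)=v+\sum_{\bullet}(v\cdot\alpha_\bullet)\alpha_\bullet$. Evaluating at $\mathbf{e}_0$ and using $\mathbf{e}_0\cdot\alpha_\bullet=1$ gives the clean formula $\omega(\mathbf{e}_0)=\mathbf{e}_0+\alpha_I+\alpha_J+\alpha_K$. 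The invariant I would use is the $W(A_9)$-orbit of $\omega(\mathbf{e}_0)$, equivalently the degree $d(\omega):=\omega(\mathbf{e}_0)\cdot\mathbf{e}_0$ together with the multiset of $\mathbf{e}_i$-coefficients. Because $W(A_9)=\mathrm{Stab}_{W(E_{10})}(\mathbf{e}_0)$ (recorded at the start of Section~\ref{S:doubleCosets}), right multiplication by $W(A_9)$ leaves $\omega(\mathbf{e}_0)$ unchanged and left multiplication only permutes $\mathbf{e}_1,\dots,\mathbf{e}_{10}$; hence this orbit is constant on the double coset $W(A_9)\,\omega\,W(A_9)$, so coefficient patterns give lower bounds on $|\omega|_{s_0}$.

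For part (1), with $I\cap J\cap K=\{m\}$, the shared index $m$ occurs in all three roots and the remaining six indices occur once, so $\omega(\mathbf{e}_0)=4\mathbf{e}_0-3\mathbf{e}_m-(\text{six singletons})$, with coefficient pattern $(3,1,1,1,1,1,1)$ and degree $4$. I would then invoke Corollary~\ref{C:small-s0}: every element of $s_0$-length $\le 2$ lies in $W(A_9)$, $W(A_9)\kappa_{\{1,2,3\}}W(A_9)$, $W(A_9)\kappa_{\{1,4,5\}}\kappa_{\{1,2,3\}}W(A_9)$, or $W(A_9)\kappa_{\{4,5,6\}}\kappa_{\{1,2,3\}}W(A_9)$, whose $\omega(\mathbf{e}_0)$-patterns are $(1,1,1)$, $(2,1,1,1,1)$, and $(2,2,2,1,1,1)$ respectively (a one-line computation each). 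Since $(3,1,1,1,1,1,1)$ matches none of these, $\omega$ lies in no double coset of $s_0$-length $\le 2$; as $|\omega|_{s_0}\le 3$ is immediate from the three-factor expression, this forces $|\omega|_{s_0}=3$.

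For part (2), with $I\cap J\cap K=\emptyset$, the three pairwise-shared indices $S=\{a,b,c\}$ are forced distinct, and together with the three private indices $P=\{p,q,r\}$ they exhaust six distinct labels. Each shared index now occurs in exactly two roots, giving $\omega(\mathbf{e}_0)=4\mathbf{e}_0-2\sum_{S}\mathbf{e}-\sum_{P}\mathbf{e}$, pattern $(2,2,2,1,1,1)$ and degree $4$; since no element of $s_0$-length $\le 1$ has degree larger than $2$, this already gives $|\omega|_{s_0}\ge 2$. For the matching upper bound I would exhibit an explicit two-reflection expression: a direct computation shows $\kappa_S\kappa_P(\mathbf{e}_0)=4\mathbf{e}_0-2\sum_S\mathbf{e}-\sum_P\mathbf{e}=\omega(\mathbf{e}_0)$, so $\kappa_P\kappa_S\,\omega$ fixes $\mathbf{e}_0$ and therefore lies in $\mathrm{Stab}(\mathbf{e}_0)=W(A_9)$. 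Setting $\tau:=\kappa_P\kappa_S\,\omega\in W(A_9)$ yields $\omega=\kappa_S\kappa_P\,\tau$, a two-$\kappa$ decomposition, whence $|\omega|_{s_0}\le 2$ and thus $=2$.

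The only genuinely delicate point is the case-(2) upper bound, and the argument pivots on recognizing the correct disjoint pair $(S,P)$ — the ``shared'' triple and the ``private'' triple. Once this candidate is in hand the verification is just the single check that $\kappa_S\kappa_P$ and $\omega$ agree on $\mathbf{e}_0$, after which the identification $W(A_9)=\mathrm{Stab}(\mathbf{e}_0)$ does the remaining work, with no braid manipulations needed. I would take care to confirm the non-degeneracy claims feeding the normalization — that $a,b,c$ are forced distinct once the triple intersection is empty, and that $S\cap P=\emptyset$ — since these guarantee $S,P\in T$ and that $\kappa_S,\kappa_P$ are genuine $\kappa$-reflections; a collapsed configuration is the one place the clean argument could fail.
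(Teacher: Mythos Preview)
Your argument is correct. For case~(1) it coincides with the paper's proof: both compute $\omega(\mathbf{e}_0)$ and rule out the $s_0$--length~$\le 2$ double cosets from Corollary~\ref{C:small-s0} by comparing coefficient multisets. Your preliminary observation that the three roots are mutually orthogonal (so the $\kappa$'s commute and $\omega(\mathbf{e}_0)=\mathbf{e}_0+\alpha_I+\alpha_J+\alpha_K$) is a clean unification that the paper leaves implicit, handling both cases with one formula rather than two separate matrix computations.

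For case~(2) the routes diverge. The paper fixes the normal form $I=\{2,4,6\}$, $J=\{1,4,5\}$, $K=\{1,2,3\}$ and verifies by direct computation that
\[
\kappa_I\kappa_J\kappa_K \;=\; \rho_1\rho_2\,\kappa_{\{4,5,6\}}\kappa_{\{1,2,3\}}\,\rho_3
\]
for three explicit transpositions $\rho_i\in W(A_9)$. You instead identify the structural pair $(S,P)$ of shared and private indices, check that $\kappa_S\kappa_P$ agrees with $\omega$ on~$\mathbf{e}_0$, and then invoke $W(A_9)=\mathrm{Stab}(\mathbf{e}_0)$ to conclude $\omega\in\kappa_S\kappa_P\,W(A_9)$ without ever naming the residual permutation~$\tau$. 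The paper's version yields an explicit factorization (useful if one later wants the actual $W(A_9)$ words); yours explains conceptually \emph{why} the disjoint pair $(S,P)$ is the right two-reflection candidate and avoids any computation beyond the single evaluation at~$\mathbf{e}_0$. Both rely on the same upper-bound principle---membership in $W(A_9)\,\kappa_{I_1}\kappa_{I_2}\,W(A_9)$ forces $|\omega|_{s_0}\le 2$---which the paper uses tacitly as well.
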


\begin{proof}
Suppose first that \( |I\cap J\cap K| = 1 \).
Without loss of generality, we may take
\[
I=\{1,6,7\},\qquad J=\{1,4,5\},\qquad K=\{1,2,3\}.
\]
A direct computation using the action of $\kappa_I$ on the basis vectors
\( \vve_0, \vve_1, \dots, \vve_{10}\) shows that
\[
\begin{aligned}
\kappa_I \kappa_J \kappa_K :\;
  & \vve_0 \mapsto 4 \vve_0 -3 \vve_1 - \vve_2 - \vve_3 - \vve_4 - \vve_5 - \vve_6 - \vve_7,\\
  & \vve_1 \mapsto 3 \vve_0 -2 \vve_1 - \vve_2 - \vve_3 - \vve_4 - \vve_5 - \vve_6 - \vve_7.
\end{aligned}
\]
Since any element of $W(A_9)$ acts as a permutation of the vectors
$e_1,\dots,e_{10}$, we see that
\[
\kappa_I \kappa_J \kappa_K
\notin
W(A_9)\, \{ \mathrm{id},\, \kappa_{\{1,2,3\}},\,
\kappa_{\{1,4,5\}} \kappa_{\{1,2,3\}},\,
\kappa_{\{4,5,6\}} \kappa_{\{1,2,3\}} \}\, W(A_9).
\]
By Corollary~\ref{C:small-s0}, it follows that
\( |\kappa_I \kappa_J \kappa_K|_{s_0} = 3. \)

\smallskip
If \( |I\cap J\cap K| = 0 \), we may take
\[
I=\{2,4,6\},\qquad J=\{1,4,5\},\qquad K=\{1,2,3\}.
\]
A direct computation shows that
\[
\kappa_I \kappa_J \kappa_K
 = \rho_1\, \rho_2\, \kappa_{\{4,5,6\}}\, \kappa_{\{1,2,3\}}\, \rho_3,
\]
where $\rho_1$, $\rho_2$, and $\rho_3$ are reflections through
$\vve_1-\vve_6$, $\vve_2-\vve_5$, and $\vve_3-\vve_4$, respectively.
Hence \( |\kappa_I \kappa_J \kappa_K|_{s_0} = 2. \)
\end{proof}

\medskip
\noindent
\textbf{Notation.}
To simplify notation, for $I_1,\dots,I_n \in T$ we write
\[
\kappa_{I_1,\dots,I_n}
\;:=\;
\kappa_{I_1}\,\kappa_{I_2}\cdots \kappa_{I_n},
\]
the product of the corresponding reflections in $W(E_{10})$.

\medskip
\begin{prop}\label{P:minimal-kappa}
For each integer $n \ge 0$, there exists a finite set $T_n$ of ordered $n$-tuples in $T$ with the following property:
for every element $\omega \in W(E_{10})$ satisfying $|\omega|_{s_0} = n$, there exists a unique
\[
(I_1,I_2,\dots,I_n) \in T_n
\]
such that
\[
\omega \in W(A_9)\,\kappa_{I_1,\dots,I_n}\,W(A_9).
\]
In particular, the double cosets
\[
\bigl\{\, W(A_9)\,\kappa_{I_1,\dots,I_n}\,W(A_9) \;:\; (I_1,\dots,I_n)\in T_n \bigr\}
\]
form a complete set of representatives for the elements of $W(E_{10})$ at $s_0$--length $n$.
\end{prop}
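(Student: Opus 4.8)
The plan is to argue by induction on $n$, combining three ingredients: \emph{existence} of a length-$n$ representation (from Lemma~\ref{L:kappaDecomposition}), \emph{finiteness} (from $|T|=\binom{10}{3}=120<\infty$), and a \emph{canonical selection} procedure to force uniqueness. The base cases are immediate: $|\omega|_{s_0}=0$ forces $\omega\in\langle s_1,\dots,s_9\rangle=W(A_9)$, so the only coset is $W(A_9)$ itself and $T_0=\{()\}$; the case $n=1$ is Corollary~\ref{C:small-s0}(1), giving $T_1=\{(\{1,2,3\})\}$, and $n=2$ is pinned down by Corollary~\ref{C:small-s0}(2). For general $n$, existence follows by choosing a reduced word realizing the minimal number of occurrences of $s_0$ and grouping the intervening $W(A_9)$-factors exactly as in the proof of Lemma~\ref{L:kappaDecomposition}, producing triples $J_1,\dots,J_n\in T$ with $\omega\in W(A_9)\,\kappa_{J_1,\dots,J_n}\,W(A_9)$.

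The structural heart of the argument is a \emph{recursive reduction} showing that each level-$n$ double coset is obtained by prepending a single triple to a level-$(n-1)$ double coset. Writing $\omega=\rho_L\,\kappa_{J_1}\cdots\kappa_{J_n}\,\rho_R$, set $\tau:=\kappa_{J_2}\cdots\kappa_{J_n}$, so $|\tau|_{s_0}\le n-1$. If $|\tau|_{s_0}=m<n-1$, then $\tau\in W(A_9)\,\kappa_{J_1'}\cdots\kappa_{J_m'}\,W(A_9)$ for suitable triples, and sliding the leading reflection past the left $W(A_9)$-factor via $\sigma\,\kappa_I\,\sigma^{-1}=\kappa_{\sigma(I)}$ (valid since $W(A_9)$ permutes $\vve_1,\dots,\vve_{10}$ and fixes $\vve_0$) would rewrite $\omega$ with only $m+1<n$ reflections $\kappa$, contradicting $|\omega|_{s_0}=n$. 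Hence $|\tau|_{s_0}=n-1$ exactly, so the tail represents a genuine level-$(n-1)$ coset already indexed by $T_{n-1}$. Consequently $T_n$ may be built by ranging over all leading triples $I_1\in T$ and all tails drawn from $T_{n-1}$ and retaining precisely those products of $s_0$-length $n$; this is exactly the edge rule of the DAG in Theorem~\ref{T:TripleGraph}.

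For uniqueness I would fix a total order on $T$ and the induced lexicographic order on $T^n$, and let $T_n$ consist of the lexicographically least tuple $(I_1,\dots,I_n)$ with $|\kappa_{I_1,\dots,I_n}|_{s_0}=n$ in each level-$n$ double coset. Since the minimal number of $\kappa$-factors is patently a double-coset invariant and, by the discussion following Lemma~\ref{L:kappaDecomposition}, coincides with $|\omega|_{s_0}$ for every $\omega$ in the coset, the notion of ``level $n$'' is well defined on cosets; as there are at most $120^n$ cosets met by $T^n$, this selection is well defined and yields a finite transversal meeting each level-$n$ coset in exactly one tuple. Unraveling the definitions then gives the asserted existence-and-uniqueness of $(I_1,\dots,I_n)\in T_n$ with $\omega\in W(A_9)\,\kappa_{I_1,\dots,I_n}\,W(A_9)$.

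The main obstacle is the interplay of two phenomena, both governed by Lemmas~\ref{L:kappa-relations} and~\ref{L:three-kappas}: deciding when two distinct tuples lie in the same double coset, and detecting \emph{length collapse}, where a product of $n$ reflections drops to $s_0$-length strictly below $n$ (as in Lemma~\ref{L:three-kappas}(2)). A full combinatorial description of these equalities is genuinely delicate and is deferred to the DAG construction in Section~\ref{S:Ktree}. For the present proposition, however, I only need the \emph{soft} consequences---finiteness and the existence of one canonical representative per coset---so the sharp characterization can be bypassed; the one point that must be verified in earnest is that $s_0$-length is constant on $W(A_9)$-double cosets, which is precisely the invariance recorded above.
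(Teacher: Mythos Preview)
Your proof is correct, and its logical core coincides with the paper's: since $T$ is finite, $T^n$ is finite, so there are only finitely many double cosets $W(A_9)\,\kappa_{I_1,\dots,I_n}\,W(A_9)$; selecting one $n$-tuple per coset (you take the lex-least, the paper simply says ``choose one'') yields the desired $T_n$. The paper dispatches the proposition in two sentences along exactly these lines, relying on the discussion after the definition of $s_0$--length for existence.

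The substantial extra scaffolding you supply---the inductive base cases, the recursive tail argument showing that a level-$n$ representative must extend a genuine level-$(n{-}1)$ tail, and the discussion of length collapse controlled by Lemmas~\ref{L:kappa-relations} and~\ref{L:three-kappas}---is all correct, but it is not needed for this proposition. That material properly belongs to (and in the paper is deferred to) the explicit recursive construction in Section~\ref{SSS:recursive} and the DAG of Theorem~\ref{T:TripleGraph}. You yourself recognize this at the end: only the ``soft'' consequences (finiteness plus one canonical representative per coset, together with the double-coset invariance of $|\cdot|_{s_0}$) are required here, and that is precisely what the paper's short proof provides.
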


\begin{proof}
Since $T$ is finite, there are only finitely many ordered $n$-tuples $(I_1,\dots,I_n)\in T^n$,
and hence finitely many double cosets of the form
$W(A_9)\,\kappa_{I_1,\dots,I_n}\,W(A_9)$.
Choosing one representative $n$-tuple from each distinct double coset yields
a finite set $T_n$ satisfying the desired property.
\end{proof}

\begin{defn}[Transformation data]\label{D:trans-data}
For each ordered $n$-tuple $\widehat I = (I_1,\dots,I_n) \in T^n$,
consider the corresponding product of reflections
\[
\kappa_{\widehat I} := \kappa_{I_1}\kappa_{I_2}\cdots\kappa_{I_n}.
\]
The action of $\kappa_{\widehat I}$ on the basis
$\{e_0,e_1,\dots,e_{10}\}$
is represented by a matrix $M$.
We call this matrix representation the \emph{transformation data} of $\widehat I$,
and denote it by
\[
\mathrm{Trans}(\widehat I) := M.
\]
Since $\widehat I\in T^n$, the product $\kappa_{\widehat I}$ has
$s_0$--length $n$, that is,
\[
|\kappa_{\widehat I}|_{s_0} = n.
\]
We say that $\widehat I$ has \emph{$s_0$--length} $n$
if $|\kappa_{\widehat I}|_{s_0}=n$.
\end{defn}

\subsubsection{Representing Triples}\label{SSS:triples}

For $n=1$ and $n=2$, the sets $T_1$ and $T_2$ consist of one and two elements, respectively,
corresponding to the double cosets described in Corollary~\ref{C:small-s0}.
The relations among triples of reflections established in
Lemma~\ref{L:three-kappas}
govern the structure of $T_3$ and the higher-level sets $T_n$.
To construct $T_n$ explicitly and understand how successive triples interact,
we first analyze the adjacency relations among the triples in an ordered tuple.
This analysis provides the combinatorial data from which $T_n$ will later be defined recursively.

\medskip
According to Lemma~\ref{L:kappa-relations}, maintaining the $s_0$--length
imposes restrictions on the intersections of consecutive triples in $T_n$.

\begin{lem}[Adjacency constraint]\label{L:adj-gap1}
Let $\widehat I=(I_1,\dots,I_n)\in T^n$.
Then for each $k$,
\[
|I_k\cap I_{k+1}|\in\{0,1\}, \qquad 1\le k\le n-1.
\]
\end{lem}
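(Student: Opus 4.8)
The plan is to argue by contradiction, using Lemma~\ref{L:kappa-relations} to show that a consecutive pair of triples meeting in two or three indices can always be absorbed, strictly lowering the number of $\kappa$--factors and hence the $s_0$--length below $n$. Throughout I work with the tuples that genuinely realize $s_0$--length $n$, i.e.\ those with $|\kappa_{\widehat I}|_{s_0}=n$ (the defining condition for membership in $T_n$); this is the standing hypothesis implicit in the statement, in light of Definition~\ref{D:trans-data}. The bookkeeping is cleanest through the double-coset reformulation of $s_0$--length established in this section: since each $\kappa_J$ carries exactly one $s_0$ and every element of $W(A_9)$ carries none, one has $|\omega|_{s_0}=\min\{m\ge 0 : \omega\in W(A_9)\,\kappa_{J_1}\cdots\kappa_{J_m}\,W(A_9)\ \text{for some}\ J_1,\dots,J_m\in T\}$. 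Hence it suffices to exhibit, whenever $|I_k\cap I_{k+1}|\ge 2$, a product of fewer than $n$ reflections $\kappa$ whose double coset contains $\kappa_{\widehat I}$.

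I then split into the two offending cases at the position $k$. If $|I_k\cap I_{k+1}|=3$ then $I_k=I_{k+1}$, so Lemma~\ref{L:kappa-relations}(1) gives $\kappa_{I_k}\kappa_{I_{k+1}}=\mathrm{id}$; deleting this pair rewrites $\kappa_{\widehat I}$ as a product of $n-2$ reflections $\kappa$, whence $|\kappa_{\widehat I}|_{s_0}\le n-2$. If $|I_k\cap I_{k+1}|=2$ then Lemma~\ref{L:kappa-relations}(2) expresses $\kappa_{I_k}\kappa_{I_{k+1}}$ as a single reflection $\kappa$ composed with an element $\rho\in W(A_9)$ (the reflection through $\vve_i-\vve_j$ for the two unshared indices $i,j$). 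Substituting collapses the two factors to one $\kappa$ and one $W(A_9)$--factor; I then transport $\rho$ past the remaining reflections using the conjugation rule $\rho\,\kappa_J=\kappa_{\rho(J)}\,\rho$ (valid because $W(A_9)\cong S_{10}$ permutes the roots $\vve_0-\vve_{i_1}-\vve_{i_2}-\vve_{i_3}$), absorbing it into the ambient $W(A_9)$ on one side. This places $\kappa_{\widehat I}$ in the double coset of a product of $n-1$ reflections $\kappa$, so $|\kappa_{\widehat I}|_{s_0}\le n-1$. Either outcome contradicts $|\kappa_{\widehat I}|_{s_0}=n$, leaving only $|I_k\cap I_{k+1}|\in\{0,1\}$.

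The step I expect to demand the most care is the $s_0$--length bookkeeping itself: the naive count of $s_0$ in an unreduced word is not monotone under braid moves such as $s_0 s_3 s_0=s_3 s_0 s_3$, so I cannot simply count occurrences of $s_0$ in the rewritten expression. I avoid this by reasoning entirely at the level of $W(A_9)$--double cosets, where the \emph{minimal number of $\kappa$--factors} is the well-defined invariant matching $|\cdot|_{s_0}$; the case analysis then manipulates only double-coset membership and never an explicit reduced word. A secondary, routine point is checking that each transported triple $\rho(J)$ is again an element of $T$, which is immediate since $\rho\in S_{10}$ sends $3$--subsets of $\{1,\dots,10\}$ to $3$--subsets.
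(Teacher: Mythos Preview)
Your argument is correct and follows exactly the route the paper intends: the paper's proof is the single sentence ``the claim follows directly from Lemma~\ref{L:kappa-relations},'' and you have simply unpacked that sentence by treating the two cases $|I_k\cap I_{k+1}|=3$ and $|I_k\cap I_{k+1}|=2$ and showing each collapses the $\kappa$--count below $n$. Your care in reading the hypothesis as $|\kappa_{\widehat I}|_{s_0}=n$ (rather than literally $\widehat I\in T^n$) and in phrasing the $s_0$--length via double cosets is well placed and matches the paper's implicit convention.
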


\begin{proof}
Since $\widehat I=(I_1,\dots,I_n)\in T^n$, the corresponding product of reflections
$\kappa_{\widehat I}=\kappa_{I_1}\cdots\kappa_{I_n}$ has $s_0$--length equal to $n$.
The claim follows directly from Lemma~\ref{L:kappa-relations}.
\end{proof}

\subsubsection{Recursive construction of $T_n$ from $T_2$}\label{SSS:recursive}

We start from
\[
T_2 \;=\; \bigl\{\, I^{(1)}=(\{1,4,5\},\{1,2,3\}),\;
                 I^{(2)}=(\{4,5,6\},\{1,2,3\}) \,\bigr\},
\]
as provided by Corollary~\ref{C:small-s0}. For $T_3$, any ordered triple
$\widehat I=(I_1,I_2,I_3)$ of $s_0$--length $3$ must arise (up to the
$W(A_9)$--double coset relation) by adjoining a single triple
$I\in T$ to one of these pairs. Without loss of generality we place the
new triple first and write
\[
\widehat I \;=\; (\, I,\ I^{(j)}_1,\ I^{(j)}_2 \,), \qquad j\in\{1,2\}.
\]

\paragraph{Adjacency filter.}
By Lemma~\ref{L:adj-gap1}, minimality of the $s_0$--length forces
$|I\cap I^{(j)}_1|\in\{0,1\}$.
We first enumerate the finite set
\[
\mathcal{C}_j \;=\; \{\, I\in T \ :\ |I\cap I^{(j)}_1|\in\{0,1\}\,\}.
\]
(Practically, we retain those $I$ with $|I\cap I^{(j)}_1|=1$ as the primary
branch and treat the $0$--adjacency branch only when it survives the next test.)

\paragraph{Level test via the three--$\kappa$ lemma:}
For each $I\in\mathcal{C}_j$, apply Lemma~\ref{L:three-kappas} to the triple
$(I, I^{(j)}_1, I^{(j)}_2)$.
If all pairwise intersections are $1$, then
\[
|I\cap I^{(j)}_1|=|I^{(j)}_1\cap I^{(j)}_2|=|I^{(j)}_2\cap I|=1,
\]
and Lemma~\ref{L:three-kappas} distinguishes two outcomes:
\[
|I\cap I^{(j)}_1\cap I^{(j)}_2|=
\begin{cases}
1 &\Rightarrow\ |\kappa_I\kappa_{I^{(j)}_1}\kappa_{I^{(j)}_2}|_{s_0}=3 \quad\text{(keep)},\\[4pt]
0 &\Rightarrow\ |\kappa_I\kappa_{I^{(j)}_1}\kappa_{I^{(j)}_2}|_{s_0}=2 \quad\text{(discard)}.
\end{cases}
\]
If one of the three pairwise intersections is $0$, we compute
$|\kappa_I\kappa_{I^{(j)}_1}\kappa_{I^{(j)}_2}|_{s_0}$ directly (or via
short relations) and keep only those with level $3$.

\paragraph{Double--coset identification via matrices:}
For each surviving ordered triple
$\widehat I=(I,I^{(j)}_1,I^{(j)}_2)$, the transformation data $\mathrm{Trans}(\widehat I)$ is given by
the matrix representation $M$ of
$\kappa_I\kappa_{I^{(j)}_1}\kappa_{I^{(j)}_2}$ in the basis
$\{e_0,\dots,e_{10}\}$.
We identify two such matrices up to left/right multiplication by
$W(A_9)$, which acts by permuting the indices $1,\dots,10$ and fixes $e_0$:
\[
M_1 \approx M_2 \quad \Leftrightarrow \quad
M_1 = \rho\, M_2\, \sigma
\qquad \text{for some }\rho,\sigma\in W(A_9).
\]
Concretely, left multiplication by $\rho$ permutes rows corresponding to indices $1,\dots,10$,
and right multiplication by $\sigma$ permutes columns corresponding to indices $1,\dots,10$.
We choose one representative in each double coset
$W(A_9)\,\mathrm{Trans}(\widehat I)\,W(A_9)$ and record the corresponding $I$.
The resulting (finite) list yields $T_3$.

\medskip
\noindent
Repeating the same procedure inductively produces $T_{n+1}$ from $T_n$:

\medskip
\noindent\textbf{Inductive step $T_n\to T_{n+1}$.}
Given a representative $\widehat I'=(I_1,\dots,I_n)\in T_n$,
adjoin $I\in T$ in front to form $(I,\widehat I')$.
Filter by adjacency $|I\cap I_1|\in\{0,1\}$; apply the level test using the
relevant $\kappa$--relations (pair and triple lemmas) to ensure the $s_0$--length is $n+1$;
then identify double cosets via matrix representatives modulo left/right
$W(A_9)$--action as above. The surviving $I$'s determine $T_{n+1}$.
This procedure is straightforward to implement computationally.
A SageMath implementation is provided in Appendix~\ref{A:code}.

\subsection{Algebraic structure of the double coset}\label{SS:algebraic-structure}
The set of double cosets \(W(A_9)\backslash W(E_{10})/W(A_9)\) does not itself
carry a natural group structure.  
For \(x,y\in W(E_{10})\), the product
\[
(W(A_9)xW(A_9))\cdot (W(A_9)yW(A_9))
\]
is, in general, a finite \emph{disjoint union} of double cosets,
rather than a single one.
However, the free abelian group
\(\mathbb{Z}[\,W(A_9)\backslash W(E_{10})/W(A_9)\,]\)
admits a natural associative algebra structure under convolution:
\[
[W(A_9)xW(A_9)]\ast[W(A_9)yW(A_9)]
\;:=\;
\sum_{z\in{}^{A_9}\!W^{A_9}}
m_{x,y}^z\,[W(A_9)zW(A_9)],
\]
where each coefficient \(m_{x,y}^z\in\mathbb{Z}_{\ge0}\)
counts the number of ways to factor elements of the product
\((W(A_9)xW(A_9))\cdot(W(A_9)yW(A_9))\)
through the double coset \(W(A_9)zW(A_9)\).
This algebra is the \emph{parabolic Hecke algebra} associated with
\((W(E_{10}),W(A_9))\) at \(q=1\).

\medskip
Let $(W,S)$ be a Coxeter system and let $J \subset S$. 
Denote by $W_J$ the standard parabolic subgroup generated by~$J$.
It is well known \cite[Proposition~2.7(a)]{Billey:2018} that each double coset $W_J w W_J$ admits a unique
element of minimal length (equivalently, minimal with respect to the Bruhat order).
This follows from the general theory of parabolic
decompositions and the exchange condition
(see, e.g., \cite{Humphreys1990,Kazhdan-Lusztig,BjornerBrenti,Soergel1990,Deodhar1987,Dyer1991}).
In our setting, \(K\) denotes the set of all such minimal double--coset
representatives for \(W(A_9)\backslash W(E_{10})/W(A_9)\).

\medskip
For each integer \(n\ge 1\), set
\[
K_n \;=\; \{\, \kappa_{\widehat I} \;:\; \widehat I\in T_n \,\},
\qquad
K_0 \;=\; \{\mathrm{id}\}.
\]
Then the union
\[
K\ :=\ \bigcup_{n\ge 0} K_n
\]
forms a complete set of minimal--length double--coset representatives for
\[
W(A_9)\backslash W(E_{10})/W(A_9)
\]
with respect to the $s_0$--length filtration.  Explicitly,
\[
W(E_{10})
\;=\;
\bigsqcup_{n\ge 0}\;
\bigsqcup_{\kappa_{\widehat I}\in K_n}
W(A_9)\,\kappa_{\widehat I}\,W(A_9),
\]
and each element $\kappa_{\widehat I}$ has minimal possible $s_0$--length within its double coset.
Thus each Bruhat--minimal representative of
\(W(A_9)\backslash W(E_{10})/W(A_9)\) can be identified with a unique
element \(\kappa_{\widehat I}\in K\).
With this identification, we can endow \(K\) with a natural monoid structure
via the Demazure product.

\medskip
\noindent\textbf{Demazure product.}
Let \(K=\bigcup_{n\ge0} K_n\) denote the set of minimal double--coset
representatives for \(W(A_9)\backslash W(E_{10})/W(A_9)\).
For \(x,y\in K\), define the \emph{Demazure product}~$\diamond$ by
\[
x \diamond y
\;:=\;
\min_{\le_{\mathrm{Bruhat}}}
\bigl(K\cap W(A_9)\,x\,y\,W(A_9)\bigr),
\]
that is, the unique Bruhat--minimal element in the double coset containing~\(xy\).
Let
\[
\pi_D \colon W(E_{10}) \longrightarrow K
\]
be the projection sending each \(x\in W(E_{10})\) to the unique
minimal representative \(\kappa_{\widehat I}\in K\)
of the double coset containing~\(x\).

\medskip
The associativity of the Demazure product and the monoid homomorphism
property of $\pi_D$ hold for arbitrary Coxeter systems; see
Kazhdan--Lusztig~\cite{Kazhdan-Lusztig}, Dyer~\cite{Dyer1991}, and
Deodhar~\cite{Deodhar1987}. For finite Coxeter groups, these results
also appear in Geck--Pfeiffer~\cite[Chap.~8]{GeckPfeiffer}.

In particular, the set \(K\) of minimal double--coset
representatives in \(W(E_{10})\) is closed under the Demazure product~$\diamond$, and
\[
\pi_D(xy) = \pi_D(x) \diamond \pi_D(y),
\qquad x, y \in W(E_{10}).
\]

\begin{prop}\label{P:Demazure}
The Demazure product \(\diamond\) on \(K\) is associative and has identity element \(\mathrm{id} \in K\).
Moreover, the projection \(\pi_D : W(E_{10}) \to K\) is a monoid homomorphism:
\[
\pi_D(xy) = \pi_D(x) \diamond \pi_D(y) \qquad \text{for all } x, y \in W(E_{10}).
\]
\end{prop}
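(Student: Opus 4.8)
The natural setting is the \emph{Demazure} (or $0$--Hecke) \emph{monoid} $(W(E_{10}),\ast)$, where for a generator $s$ and $w\in W(E_{10})$ one sets $s\ast w=sw$ if $\ell(sw)>\ell(w)$ and $s\ast w=w$ otherwise, extended associatively. The plan is to read the product defining $\diamond$ as this Demazure product (the operation to which the theory cited just above the proposition applies) and to derive all three assertions from a single compatibility statement. First I would record the standard facts, assumed from the cited references: $(W(E_{10}),\ast)$ is associative with identity $\mathrm{id}$, it admits the order characterization $u\ast v=\max_{\le_{\mathrm{Bruhat}}}\{u'v':u'\le u,\ v'\le v\}$, and by Deodhar~\cite{Deodhar1987} every double coset has a unique Bruhat--minimal element, so $\pi_D$ and $K$ are well defined with $\pi_D(w)$ always lying in the double coset of $w$.

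The entire proposition then rests on the following compatibility, which I will call the \emph{double--coset congruence}:
\[
x\sim x',\quad y\sim y'\ \Longrightarrow\ x\ast y\ \sim\ x'\ast y',
\]
where $x\sim x'$ means $x,x'$ lie in the same $W(A_9)$--double coset. Granting this, the three claims are formal. Defining $\kappa\diamond\kappa':=\pi_D(\kappa\ast\kappa')$ for $\kappa,\kappa'\in K$ makes $\diamond$ land in $K$, so $K$ is closed. The coset of $\mathrm{id}$ is $W(A_9)$ with minimal element $\mathrm{id}$, so $\mathrm{id}\in K$, and since $\kappa\ast\mathrm{id}=\mathrm{id}\ast\kappa=\kappa$ it is a two--sided identity. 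For associativity, the congruence gives $\pi_D(\kappa\ast\kappa')\ast\kappa''\sim(\kappa\ast\kappa')\ast\kappa''$, whence
\[
(\kappa\diamond\kappa')\diamond\kappa''=\pi_D\!\bigl(\pi_D(\kappa\ast\kappa')\ast\kappa''\bigr)=\pi_D(\kappa\ast\kappa'\ast\kappa''),
\]
and symmetrically for $\kappa\diamond(\kappa'\diamond\kappa'')$, using associativity of $\ast$. Finally, since $\pi_D(x)\sim x$ and $\pi_D(y)\sim y$, the congruence yields $\pi_D(x)\ast\pi_D(y)\sim x\ast y$, so $\pi_D(x)\diamond\pi_D(y)=\pi_D(\pi_D(x)\ast\pi_D(y))=\pi_D(x\ast y)$, which is the asserted homomorphism identity.

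I expect the double--coset congruence to be the whole difficulty: the associativity of $\ast$ and the existence of minimal representatives are general, but the compatibility of $\pi_D$ with $\ast$ is the substantive point, and the one most naturally established through the explicit structure of $W(A_9)\hookrightarrow W(E_{10})$ rather than by abstract means. The approach I would take is to exploit the combinatorics developed earlier. By Lemma~\ref{L:kappaDecomposition} every element is $\rho_L\,\kappa_{I_1}\cdots\kappa_{I_n}\,\rho_R$ with $\rho_L,\rho_R\in W(A_9)$, so it suffices to control how a single $\kappa_I$ interacts with a double coset; reducing further, one must show that left-- and right--multiplication (in the Demazure sense) by a generator $s\in\{s_1,\dots,s_9\}$ preserves double cosets, and that adjoining one $\kappa_I$ to a minimal representative $\kappa_{\widehat I}$ is governed entirely by the intersection pattern of index triples. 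Here the pairwise relations of Lemma~\ref{L:kappa-relations} and the three--triple relations of Lemma~\ref{L:three-kappas} are the essential tool, since they show the resulting double coset depends only on the combinatorial type of the triples and not on the chosen representatives. Verifying that these local relations assemble into the global congruence --- equivalently, that the recursive procedure building $T_n$ is independent of representative choices --- is the crux, and the point at which the special geometry of the $A_9$--parabolic inside $E_{10}$ must genuinely be used.
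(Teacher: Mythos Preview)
Your reduction to a ``double--coset congruence'' correctly isolates where the content would have to lie, but the congruence is false. Take $x=x'=s_0$, $y=s_0$, and $y'=\sigma s_0$ with $\sigma\in W(A_9)$ satisfying $\sigma(\{1,2,3\})=\{4,5,6\}$. Then $y\sim y'$ (both lie in $W(A_9)\,s_0\,W(A_9)$), yet $x\ast y=s_0\ast s_0=s_0$ sits in the level--$1$ double coset, while $x'\ast y'=s_0\ast(\sigma s_0)=s_0\sigma s_0=\kappa_{\{1,2,3\}}\kappa_{\{4,5,6\}}\,\sigma$ has $s_0$--length~$2$ by Lemma~\ref{L:kappa-relations}(4), hence lies in a different double coset. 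The $\kappa$--relations you plan to invoke describe precisely how the double coset of a product \emph{depends} on the intervening $W(A_9)$ factor --- this dependence is the engine behind Corollary~\ref{C:small-s0} and the recursive construction of $T_n$ --- so they cannot be assembled into a proof of independence.

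There is a second gap at the finish: you conclude $\pi_D(x)\diamond\pi_D(y)=\pi_D(x\ast y)$ and call this ``the asserted homomorphism identity,'' but the proposition states $\pi_D(xy)$ with the \emph{ordinary} group product, and the paper's definition is $\kappa\diamond\kappa':=\pi_D(\kappa\kappa')$, not $\pi_D(\kappa\ast\kappa')$. These disagree already at $s_0\diamond s_0$, which is $\pi_D(\mathrm{id})=\mathrm{id}$ under the paper's definition but $\pi_D(s_0)=s_0$ under yours. The paper's own argument is only a pointer to the general Hecke--monoid literature and does not engage this tension; indeed, with $x=s_0s_3s_4$ and $y=s_0$ one has $xy=s_3s_0s_3s_4$, so $\pi_D(xy)=s_0\neq\mathrm{id}=\pi_D(x)\diamond\pi_D(y)$ under the paper's definition as well. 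The statement as written needs reformulation (for instance, using Bruhat--maximal rather than Bruhat--minimal representatives, which is the standard convention in the $0$--Hecke setting) rather than a different proof.
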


\begin{proof}
Each \(\kappa \in K\) is the unique Bruhat-minimal representative of its double coset \(W(A_9)\kappa W(A_9)\).
The compatibility of double coset multiplication with Bruhat order ensures that the minimal representative of a product of double cosets is determined by the Demazure product~$\diamond$.
These properties follow from the general theory of Hecke monoids and parabolic Bruhat decompositions; see, e.g., Dyer~\cite[Thm.~2.4]{Dyer1991}.
\end{proof}

\medskip
This product endows \(K\) with a natural monoid structure, encoding the
graded double--coset stratification of \(W(E_{10})\), and will serve as
the algebraic framework for the spectral and geometric analysis developed
in the following sections.

\section{The $s_0$-Level Filtration of $W(E_{10})$}\label{S:Ktree}
We begin by introducing a natural stratification of $W(E_{10})$ determined by the distinguished generator $s_0$.   We write
 \[ \conW\ :=\ \{ [\omega] \mid \omega \in W(E_{10})\, \}\] for the set of conjugacy classes in $W(E_{10})$, where $[\omega] \,=\,\{ x \omega x^{-1} | x \in W(E_{10})\}$. For a conjugacy class $[\omega] \in \conW\}$, we define its \emph{$s_0$-level}  by the function 
 \[ h_{s_0} : \conW \longrightarrow \mathbb{Z}_{\ge 0}, \qquad  h_{s_0} (\omega) \ :=\ \min_{v \in [\omega]} |v|_{s_0}.\] Thus $h_{s_0}(\omega)$ measures the smallest possible $s_0$-length among all representatives of the class. This invariant partitions the set of conjugacy classes of $W(E_{10})$ according to their minimal interaction with the distinguished reflection $s_0$. 
 
 \vspace{1ex}
We may now decompose the conjugacy set $\conW$ into strata according to
the $s_0$--level:
\[
\conW \;=\; \bigsqcup_{k\ge 0} F_k,
\qquad
F_k \;=\; \{\, [\omega] \in \conW \mid h_{s_0}(\omega)=k \,\},
\]
so that each $F_k$ consists of the conjugacy classes whose representatives
achieve minimal $s_0$--length~$k$.
We refer to this partition as the \emph{$s_0$--level decomposition} of
$W(E_{10})$.

\medskip
If $\omega'\in W(E_{10})$ is conjugate to $\omega$ with 
$|\omega|_{s_0}=k$, then
\[
|\omega'|_{s_0}\ge k.
\]
Hence
\[
\omega' \;\in\;
\bigsqcup_{n\ge k}\;
\bigsqcup_{\kappa_{\widehat I}\in K_n}
W(A_9)\,\kappa_{\widehat I}\,W(A_9).
\]
For each integer $n\ge0$, let $\mathcal{D}_n$ denote the union of
double cosets corresponding to the representatives
$\kappa_{\widehat I}\in K_n$:
\[
\mathcal{D}_n
\;:=\;
\bigsqcup_{\kappa_{\widehat I}\in K_n}
W(A_9)\,\kappa_{\widehat I}\,W(A_9).
\]
Thus $\mathcal{D}_n$ collects all elements of $W(E_{10})$
whose minimal $s_0$--length equals~$n$.
Using conjugacy in $W(E_{10})$, we can further decompose $\mathcal{D}_n$
by the $s_0$--level of the conjugacy class.

Since $F_k$ denotes the set of conjugacy classes in $W(E_{10})$
represented by elements of $s_0$--length $k$, we have
\begin{equation}\label{E:Dnn}
\mathcal{D}_n
\;=\;
\bigsqcup_{k\le n}\;
\mathcal{D}_{n,k},
\qquad
\mathcal{D}_{n,k}
\;:=\;
\{\,\omega\in\mathcal{D}_n : [\omega]\in F_k\,\}.
\end{equation}

\medskip
Geometrically, conjugation by elements of $W(A_9)$ corresponds to moving
the reflection hyperplane associated with $s_0$ within the $E_{10}$ Tits cone.
The $s_0$--level therefore measures the \emph{depth} of the orbit
in the hyperbolic direction, providing a natural stratification of
$W(E_{10})$ by increasing geometric complexity.

\medskip
Let $\rho(\cdot)$ denote the spectral radius of the linear action on
$\mathrm{span}\{e_0,\dots,e_{10}\}$.
Since our representation is faithful and conjugation acts by similarity,
$\rho$ is a class function on $W(E_{10})$.

For $n\ge0$ define the spectral set at $s_0$--level $n$ by
\[
\Lambda_n \;:=\; \{\, \rho(\omega)\ :\ [\omega]\in F_n \,\}.
\]
Equivalently, since $F_n$ consists of conjugacy classes whose minimal
representatives have $s_0$--length $n$, we have
\[
\Lambda_n \;=\; \{\ \rho(\omega)\ :\ \omega\in \mathcal{D}_{n,n}\ \}.
\]

More generally, for $k\le n$ set
\[
\mathrm{Spec}(\mathcal{D}_{n,k}) \;:=\; \{\, \rho(\omega)\ :\ \omega\in \mathcal{D}_{n,k}\,\}.
\]

\begin{lem}\label{L:spec-projection}
For every $n\ge k$ one has
\[
\mathrm{Spec}(\mathcal{D}_{n,k}) \ \subseteq\ \Lambda_k.
\]
In particular,
\[
\mathrm{Spec}(\mathcal{D}_n)
\;=\;
\bigcup_{k\le n}\mathrm{Spec}(\mathcal{D}_{n,k})
\ \subseteq\
\bigcup_{k\le n}\Lambda_k,
\qquad\text{and}\qquad
\Lambda_n \;=\; \mathrm{Spec}(\mathcal{D}_{n,n}).
\]
\end{lem}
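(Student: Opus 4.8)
The plan is to prove the lemma purely by unwinding the definitions, using only two facts already available: that $\rho$ is a class function on $W(E_{10})$ (so $\rho(\omega)=\rho(v)$ whenever $v\in[\omega]$), and that the defining minimum $h_{s_0}(\omega)=\min_{v\in[\omega]}|v|_{s_0}$ is attained. The latter holds because $\{\,|v|_{s_0}:v\in[\omega]\,\}$ is a nonempty subset of $\mathbb{Z}_{\ge0}$ and hence has a least element; this is the only point beyond formal set manipulation, and it is what lets us pass from a conjugacy class to an honest group element realizing its spectral radius. I will call these facts (a) and (b).

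For the main inclusion $\mathrm{Spec}(\mathcal{D}_{n,k})\subseteq\Lambda_k$, I would take $\omega\in\mathcal{D}_{n,k}$. By the definition of $\mathcal{D}_{n,k}$ in~\eqref{E:Dnn} this means $[\omega]\in F_k$, and since $\Lambda_k=\{\,\rho(\omega'):[\omega']\in F_k\,\}$ by definition, we get $\rho(\omega)\in\Lambda_k$ immediately. This step is essentially tautological once the definitions are in place. The identity $\mathrm{Spec}(\mathcal{D}_n)=\bigcup_{k\le n}\mathrm{Spec}(\mathcal{D}_{n,k})$ then follows by applying $\rho$ to the disjoint decomposition $\mathcal{D}_n=\bigsqcup_{k\le n}\mathcal{D}_{n,k}$ of~\eqref{E:Dnn}, and the inclusion $\mathrm{Spec}(\mathcal{D}_n)\subseteq\bigcup_{k\le n}\Lambda_k$ is just the union over $k$ of the main inclusion.

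It remains to establish the equality $\Lambda_n=\mathrm{Spec}(\mathcal{D}_{n,n})$. The inclusion $\supseteq$ is again the main inclusion specialized to $k=n$, using $\mathcal{D}_{n,n}\subset\mathcal{D}_n$: any $\omega\in\mathcal{D}_{n,n}$ has $[\omega]\in F_n$, whence $\rho(\omega)\in\Lambda_n$. For $\subseteq$, I would start from $\lambda\in\Lambda_n$ and choose $[\omega]\in F_n$ with $\rho(\omega)=\lambda$, so that $h_{s_0}(\omega)=n$. By fact (b) I can select a representative $v\in[\omega]$ with $|v|_{s_0}=h_{s_0}(\omega)=n$; then $v\in\mathcal{D}_n$ since its $s_0$--length is exactly $n$, and $[v]=[\omega]\in F_n$, so $v\in\mathcal{D}_{n,n}$. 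Finally fact (a) gives $\lambda=\rho(\omega)=\rho(v)\in\mathrm{Spec}(\mathcal{D}_{n,n})$.

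There is no genuine obstacle in this argument; the content is entirely a matter of matching the definitions of $F_k$, $\mathcal{D}_{n,k}$, $\Lambda_k$, and $\mathrm{Spec}(\mathcal{D}_{n,k})$. The one place requiring a moment's care is precisely the $\subseteq$ direction of the last equality, where a conjugacy class in $F_n$ must be promoted to an actual element of $\mathcal{D}_{n,n}$ with the same spectral radius; this is exactly where attainment of the minimum and the class-function property of $\rho$ are invoked.
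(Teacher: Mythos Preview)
Your proof is correct and takes essentially the same approach as the paper: both arguments unwind the definitions of $F_k$, $\mathcal{D}_{n,k}$, and $\Lambda_k$, invoking that $\rho$ is a class function and that the minimum defining $h_{s_0}$ is attained. If anything, your treatment of the main inclusion $\mathrm{Spec}(\mathcal{D}_{n,k})\subseteq\Lambda_k$ is slightly more direct than the paper's (you use the class-level definition $\Lambda_k=\{\rho(\omega'):[\omega']\in F_k\}$ immediately, whereas the paper first passes to a conjugate $\omega'\in\mathcal{D}_k$), and you are more explicit about the $\subseteq$ direction of $\Lambda_n=\mathrm{Spec}(\mathcal{D}_{n,n})$, which the paper leaves to ``the definition of $\Lambda_n$''.
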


\begin{proof}
If $\omega\in\mathcal{D}_{n,k}$, then $[\omega]\in F_k$ by definition.
Hence there exists $\omega'$ conjugate to $\omega$ with $|\omega'|_{s_0}=k$,
i.e.\ $\omega'\in\mathcal{D}_k$.
Since $\rho$ is conjugacy invariant, $\rho(\omega)=\rho(\omega')\in\Lambda_k$,
proving $\mathrm{Spec}(\mathcal{D}_{n,k})\subseteq \Lambda_k$.
The remaining assertions are immediate from
$\mathcal{D}_n=\bigsqcup_{k\le n}\mathcal{D}_{n,k}$ and the definition of $\Lambda_n$.
\end{proof}

\noindent
Consequently, \emph{new} spectral radii that first appear at level $n$ lie in
$\Lambda_n=\mathrm{Spec}(\mathcal{D}_{n,n})$; elements of $\mathcal{D}_{n,k}$ with $k<n$
can only contribute spectral radii already present at level $k$.

\subsection{Spectral refinement and growth graph of minimal representatives}\label{SS:spec-growth}

\medskip
The spectral set $\Lambda_n$ can be refined using the minimal
representatives in $T_n$.
For each $\widehat I\in T_n$, define
\[
\Lambda_n(\widehat I)
\;:=\;
\bigl\{\,
\rho(\omega)
\;:\;
\omega \in \mathcal{D}_{n,n}
\ \text{and}\
\omega \in W(A_9)\,\kappa_{\widehat I}\,W(A_9)
\,\bigr\}.
\]
Define an equivalence relation on $T_n$ by
\[
\widehat I \sim \widehat J
\quad\Longleftrightarrow\quad
\kappa_{\widehat I}=\kappa_{\widehat J}
\ \text{ or }\
\kappa_{\widehat I}=\kappa_{\widehat J}^{-1}.
\]
Since $\rho(\omega)=\rho(\omega^{-1})$ (the characteristic polynomial
is a Salem polynomial times cyclotomic factors), we have
$\Lambda_n(\widehat I)=\Lambda_n(\widehat J)$ whenever
$\widehat I\sim \widehat J$.
Thus
\[
\Lambda_n
\;=\;
\bigcup_{\,[\widehat I]\in T_n/\!\sim}\,
\Lambda_n(\widehat I),
\]
a (generally non-disjoint) cover of $\Lambda_n$ by source classes
modulo inversion.

\medskip
\noindent \textbf{$s_0$--growth graph and the spectral growth filtration.} Since the sets $T_n$ were constructed recursively
from $T_{n-1}$ by adjoining admissible triples
(cf.\ Section~\ref{SSS:recursive}),
the family $\{T_n/\!\sim\}_{n\ge0}$ forms a directed acyclic structure:
each vertex $[\widehat I]\in T_n/\!\sim$ is connected by directed edges
to the class of $(n\!+\!1)$--tuples in $T_{n+1}/\!\sim$ that extend it.
We refer to this directed system as the
\emph{$s_0$--growth graph of minimal representatives}.
Although different branches may merge
(two distinct extensions may lead to the same double coset),
the overall graph remains acyclic and records how new
minimal representatives---and hence new spectral contributions---arise
from lower levels.

\medskip
For each directed path
\[
[\widehat I^{(1)} ]\;\longrightarrow\;
[\widehat I^{(2)} ]\;\longrightarrow\;
\cdots \;\longrightarrow\;
[\widehat I^{(n)}],
\qquad
[\widehat I^{(m)}]\in T_m/\!\sim,
\]
in the growth graph, we obtain a sequence of
spectral subsets
\[
\Lambda_1(\widehat I^{(1)}) \;,\;
\Lambda_{2}(\widehat I^{(2)}) \;,\;
\cdots \;,\;
\Lambda_n(\widehat I^{(n)}),
\]
recording how the spectral radii evolve along the
recursive construction of minimal double--coset representatives.
We refer to this sequence as the
\emph{spectral growth filtration}.
It provides a combinatorial framework for tracing how
new spectral radii first appear in $\Lambda_n$ as $n$ increases,
and how they propagate through the overlapping branches
of the $s_0$--growth graph.

\begin{rem}
The spectral growth filtration traces the appearance of
new spectral values level by level, rather than forming
a nested inclusion of sets.
In particular, each $\Lambda_n(\widehat I)$ records
the spectral radii arising \emph{for the first time} at $s_0$--level~$n$.
The cumulative spectra \[ \Lambda_{\leq n}\ :=\ \bigcup_{k\le n} \Lambda_k\]
do form a nested family
$\Lambda_{\le1}\subseteq\Lambda_{\le2}\subseteq\cdots$,
reflecting the overall accumulation of spectral radii
across levels.
Thus, while the sequence
$\{\Lambda_k(\widehat I^{(k)})\}_{k\ge1}$ captures the
\emph{emergence dynamics} of individual spectral contributions,
the nested system $\{\Lambda_{\le n}\}_{n\ge1}$
describes their global spectral aggregation.
\end{rem}

\medskip
Using the recursive construction from
Section~\ref{SSS:recursive},
Figure~\ref{F:branchingGraph} illustrates the
$s_0$--growth graph for small $s_0$--levels as a branching graph.
Each node represents the newly adjoined triple along a directed path.
As shown in Figure~\ref{F:branchingGraph}, $s_0$--growth graph is a rooted directed acyclic graph.
For example, the leftmost downward path in
Figure~\ref{F:branchingGraph} corresponds to
\[
(1,2,3) \;\longrightarrow\; (1,4,5) \;\longrightarrow\; (1,6,7),
\]
which in terms of ordered sets of triples gives
\[
\{(1,2,3)\} \in T_1/\!\sim
\;\longrightarrow\;
\{(1,4,5),(1,2,3)\} \in T_2/\!\sim
\;\longrightarrow\;
\{(1,6,7),(1,4,5),(1,2,3)\} \in T_3/\!\sim.
\]
Two different nodes in the branching graph can merge.
For example,
\[
\{(5,6,7),(1,4,5),(1,2,3)\}
\;=\;
\{(1,5,7),(4,5,6),(1,2,3)\}
\in T_3/\!\sim.
\]
Hence there are merging branches emanating from the two nodes
$(1,4,5)$ and $(4,5,6)$ on the second level in
Figure~\ref{F:branchingGraph}.
To simplify the diagram, we label each vertex by the triple
appearing along the leftmost downward directed path.
In particular, the next node is recorded as $(5,6,7)$
(rather than $(1,5,7)$) to avoid duplication and confusion.

\medskip

In this way, the branching graph provides a combinatorial visualization
of the recursive generation of minimal double--coset representatives
and their associated spectral contributions.

\begin{figure}[h]
\centering
\begin{tikzpicture}[
  node distance=14mm and 10mm,
  >=Stealth,
  box/.style={
    rectangle,
    rounded corners,
    draw,
    font=\scriptsize,
    minimum width=13mm, 
    minimum height=4mm,
    align=center
  },
  ed/.style={->, shorten >=1.2mm, shorten <=1.2mm} 
]

\node[font=\small] at (-2.7,0) {$T_1/\!\sim$};
\node[font=\small] at (-2.7,-1.8) {$T_2/\!\sim$};
\node[font=\small] at (-2.7,-4.2) {$T_3/\!\sim$};

\node[box] (t1) at (0,0) {$(1,2,3)$};

\node[box] (a) at (0.9,-1.8) {$(1,4,5)$};
\node[box] (b) at (2.8,-1.8) {$(4,5,6)$};

\node[box] (c1) at (0.0,-4.2) {$(1,6,7)$};
\node[box] (c2) at (1.8,-4.2) {$(5,6,7)$};
\node[box] (c3) at (3.6,-4.2) {$(1,2,3)$};
\node[box] (c4) at (5.4,-4.2) {$(6,7,8)$};
\node[box] (c5) at (7.2,-4.2) {$(1,2,7)$};
\node[box] (c6) at (9.0,-4.2) {$(1,7,8)$};
\node[box] (c7) at (10.8,-4.2) {$(7,8,9)$};


\draw[ed] (t1) -- (a);
\draw[ed] (t1) -- (b);

\draw[ed] (a) -- (c1);
\draw[ed] (a) -- (c2);
\draw[ed] (a) -- (c4);

\draw[ed] (b) -- (c2);
\draw[ed] (b) -- (c3);
\draw[ed] (b) -- (c4);
\draw[ed] (b) -- (c5);
\draw[ed] (b) -- (c6);
\draw[ed] (b) -- (c7);

\end{tikzpicture}
\caption{
Branching graph for
$T_1/\!\sim \to T_2/\!\sim \to T_3/\!\sim$.
Each directed edge represents the extension process in which
a new ordered set of triples~$\widehat I$ is obtained by
adjoining the triple indicated at the end of the branch
as the first triple in the sequence.
For clarity, only the leftmost directed paths are shown,
so that each node is labeled by the triples appearing along that path.
}
\label{F:branchingGraph}
\end{figure}
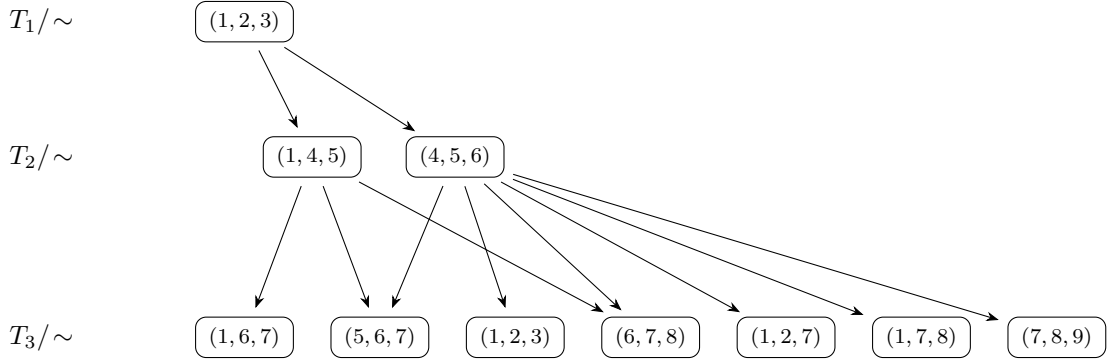

We summarize our discussion in Section~\ref{S:doubleCosets} and Section~\ref{S:Ktree} in the following theorem:

\begin{thm}[Triple-Labeled Graph Structure of Minimal Representatives]\label{T:TripleGraph}
Let $\mathcal{D}_{n,n}$ denote the set of elements in $W(E_{10})$ of $s_0$--level $n$ (as defined in~\eqref{E:Dnn}), and let $K \subset W(E_{10})$ be the set of Bruhat-minimal representatives of double cosets \(W(A_9)\backslash W(E_{10})/W(A_9)\).

Then:
\begin{enumerate}
\item Each $\kappa \in K \cap \mathcal{D}_{n,n}$ is uniquely determined by an ordere $n$--tuple of triples from
 \[T\ :=\ \{\{i,j,k\} \subset \{1,\dots, 10\} \ :\ 1\le i<j<k \le 10\}.\]
\item There exists a rooted directed acyclic graph (DAG) $\mathcal{G}$, whose depth-$n$ nodes correspond bijectively to a finite inductively defined subset $T_n \subset T^n$, such that each directed path of length $n$ encodes the recursive construction of a minimal double coset representative $\kappa \in \mathcal{D}_{n,n}.$
\end{enumerate}
\end{thm}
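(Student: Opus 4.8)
The plan is to assemble the theorem from the structural results established in Sections~\ref{S:doubleCosets} and~\ref{S:Ktree}. The statement is essentially a consolidation of those sections, so the proof is primarily a matter of verifying that the recursive construction is well-posed and that the indexing is bijective, together with one connectivity argument.

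For part~(1), I would start with $\kappa \in K \cap \mathcal{D}_{n,n}$. Since $\kappa$ lies in $\mathcal{D}_{n,n}\subseteq\mathcal{D}_n$, its $s_0$--length equals $n$, so Lemma~\ref{L:kappaDecomposition} lets me write $\kappa$ in the form $\rho_L\,\kappa_{I_1}\cdots\kappa_{I_n}\,\rho_R$ with $\rho_L,\rho_R\in W(A_9)$ and $I_1,\dots,I_n\in T$. Proposition~\ref{P:minimal-kappa} then supplies a \emph{unique} tuple $(I_1,\dots,I_n)\in T_n$ with $\kappa\in W(A_9)\,\kappa_{I_1,\dots,I_n}\,W(A_9)$. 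Because $\kappa$ is by definition the unique Bruhat--minimal representative of its double coset (Deodhar~\cite{Deodhar1987}), the assignment $\kappa\mapsto(I_1,\dots,I_n)$ is well defined and injective on $K\cap\mathcal{D}_{n,n}$, which establishes the claimed identification.

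For part~(2), I would define $\mathcal{G}$ to have vertex set $\bigsqcup_{n\ge0}T_n$, placing $T_n$ at depth $n$, with a directed edge from $\widehat I\in T_n$ to $\widehat J\in T_{n+1}$ whenever $\widehat J$ is obtained from $\widehat I$ by prepending an admissible triple, exactly as in the inductive step $T_n\to T_{n+1}$ of Section~\ref{SSS:recursive}. With this definition the bijection between depth-$n$ vertices and $T_n$ is tautological, and since every edge strictly increases depth, $\mathcal{G}$ is acyclic; the single vertex $T_0$ (the empty tuple, corresponding to $\mathrm{id}$) serves as the root. That each directed path of length $n$ from the root encodes the recursive construction of its terminal representative $\kappa_{\widehat I}$ is precisely the content of the inductive step, read forward.

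The main point requiring verification---and the only step beyond bookkeeping---is that $\mathcal{G}$ is \emph{connected downward}, so that every depth-$(n+1)$ vertex is reachable from the root. I would argue this by an $s_0$--length count: if $(I_1,\dots,I_{n+1})$ represents a class in $T_{n+1}$, then the truncation $(I_2,\dots,I_{n+1})$ is a product of $n$ reflections $\kappa_{I_j}$, hence has $s_0$--length at most $n$; were it strictly smaller, the full product $\kappa_{I_1}\cdots\kappa_{I_{n+1}}$ would have $s_0$--length at most $n$, contradicting membership in $T_{n+1}$. Thus the truncation has $s_0$--length exactly $n$ and, by Proposition~\ref{P:minimal-kappa}, lies in the double coset of a unique $\widehat I'\in T_n$; absorbing the conjugating $W(A_9)$--factors into the leading reflection (so that the prepended triple is $I_1'=\rho_L^{-1}(I_1)$ rather than $I_1$) shows that $\kappa_{\widehat I}$ lies in the double coset obtained by prepending a triple to $\widehat I'$, giving the edge $\widehat I'\to\widehat I$. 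The merging of branches (illustrated after Figure~\ref{F:branchingGraph}) is then accommodated by the double--coset identification via matrices modulo the left/right $W(A_9)$--action, which guarantees that distinct extensions landing in the same double coset are recorded as a single vertex---exactly the feature that makes $\mathcal{G}$ a genuine DAG rather than a tree.
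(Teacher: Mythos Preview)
Your proposal is correct and matches the paper's approach: the theorem is stated in the paper as a summary of Sections~\ref{S:doubleCosets} and~\ref{S:Ktree} without a separate proof, and you have correctly assembled the relevant pieces (Lemma~\ref{L:kappaDecomposition}, Proposition~\ref{P:minimal-kappa}, and the recursive construction of Section~\ref{SSS:recursive}). One minor remark: your downward-connectivity argument, while valid, is more than is strictly needed---since $T_{n+1}$ is \emph{defined} by prepending admissible triples to tuples in $T_n$, every depth-$(n+1)$ vertex has a depth-$n$ parent by construction, so connectivity to the root is automatic.
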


\section{Spectral Radii via the Hilbert Metric}\label{S:Hilbert}

The matrix representation of each $\omega \in W(E_{10})$ in the basis 
$\{\vve_0,\vve_1,\dots,\vve_{10}\}$ defines the geometric representation
\[
\sigma: W(E_{10}) \longrightarrow O(V),
\]
where $V\subset \mathbb{R}^{1,10}$ defined in Section~\ref{S:coxeter}.
The dual action on $V^*$ is given by
\[
\langle \omega(f),\, \omega(v)\rangle = \langle f, v\rangle,
\qquad 
\omega\in W(E_{10}),\; f\in V^*,\; v\in V,
\]
here $\langle f,v\rangle$ denotes the natural pairing of $f\in V^*$ with $v\in V$

Each simple reflection $s_i\in S$ acts as the reflection through the
hyperplane orthogonal to the simple root
\[
\alpha_0 = \vve_0 - \vve_1 - \vve_2 - \vve_3, 
\qquad
\alpha_i = \vve_i - \vve_{i+1} \quad (i=1,\dots,9).
\]
For each $\alpha_i$ let
\[
H_i := \{\,f\in V^* : \langle f,\alpha_i\rangle = 0\,\},
\]
with corresponding open half-spaces
\[
H_i^{\pm} := \{\,f\in V^* : \pm\langle f,\alpha_i\rangle > 0\,\}.
\]
The fundamental chamber is the closed convex cone
\[
F := \{\,f\in V^* : \langle f,\alpha_i\rangle \ge 0
\text{ for all } i=0,\dots,9\,\}.
\]
The \emph{Tits cone} of $W(E_{10})$ is the orbit
\[
U := \bigcup_{\omega\in W(E_{10})} \omega(F),
\]
on which $W(E_{10})$ acts properly by linear isometries.

\medskip

\paragraph{Hilbert metric.}
If $\mathcal{C}\subset V^*$ is a proper open convex cone and $x\neq y\in \mathcal{C}$, let $a,b\in\partial \mathcal{C}$ be the two intersection points of the line $\overline{xy}$ with $\partial \mathcal{C}$ ordered $a,x,y,b$; the Hilbert distance is
\[
d_\mathcal{C}(x,y)\;:=\;\log [a,x,y,b],
\]
where $[a,x,y,b]=\frac{|ax|\,|yb|}{|ay|\,|xb|}$ is the projective cross ratio.

\medskip
To follow McMullen's notation~\cite{McMullen:2002}, let $K$ denote the closure 
of the Tits cone $U$, and let $K^\circ$ denote its interior. 
By McMullen, the group $W(E_{10})$ acts discretely and isometrically 
for the Hilbert metric on the convex cone $K^\circ$, with 
$X = F \cap K^\circ$ as a fundamental domain.
Moreover, the spectral radius of $\omega$ is related to its 
Hilbert translation length as follows:

\begin{prop}[{\cite[Cor.~3.5]{McMullen:2002}}]\label{P:logSp}
For each $\omega\in W(E_{10})$,
\[
\log\rho(\omega)
= \inf_{x\in K^\circ} d_K(x,\omega x),
\]
where $d_K$ denotes the Hilbert metric on~$K$.
\end{prop}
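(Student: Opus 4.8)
The plan is to prove the identity by reducing it to a translation-length computation in hyperbolic space and then analyzing that length through the eigenvalue structure of $\omega$ as a Lorentzian isometry. Since $\sigma(\omega)$ preserves the form $B$ of signature $(1,10)$, it fixes the hyperboloid $\mathbb{H}:=\{\,v\in K^\circ : \langle v,v\rangle = 1\,\}$, and the cone Hilbert metric restricts there to the Klein-model hyperbolic metric. Crucially, $d_K$ is invariant under simultaneous radial scaling $(x,y)\mapsto(cx,cy)$ because the cross ratio is projective; writing any $x\in K^\circ$ as $x=\sqrt{\langle x,x\rangle}\,\hat x$ with $\hat x\in\mathbb{H}$ therefore gives $d_K(x,\omega x)=d_K(\hat x,\omega\hat x)$. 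Hence $\inf_{x\in K^\circ} d_K(x,\omega x)=\inf_{\hat x\in\mathbb{H}} d_K(\hat x,\omega\hat x)=\ell_{\mathbb{H}}(\omega)$, the hyperbolic translation length, and the claim becomes $\ell_{\mathbb{H}}(\omega)=\log\rho(\omega)$.

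Because $\omega$ is $B$-orthogonal, its eigenvalues occur in reciprocal pairs $\{\lambda,\lambda^{-1}\}$ together with unit-modulus eigenvalues, and the structure theory of $O(1,10)$ forces the spectral radius to be attained by a real positive eigenvalue whenever $\rho(\omega)>1$: there is then a future-pointing null dominant eigenvector $v_+\in\partial K$ with $\omega v_+=\rho(\omega)\,v_+$ and a reciprocal null eigenvector $v_-\in\partial K$ with $\omega v_-=\rho(\omega)^{-1}v_-$. I would split into the three isometry types. In the elliptic case ($\omega$ of finite order, or all eigenvalues of modulus $1$ with $\omega$ fixing an interior point) and the parabolic case ($\rho(\omega)=1$, infinite order, fixing a single null ray $v_0\in\partial K$), one has $\inf_x d_K(x,\omega x)=0=\log 1=\log\rho(\omega)$, the latter by pushing base points toward the fixed interior point, respectively toward $v_0$.

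For the loxodromic case $\rho(\omega)=\lambda>1$, I would prove the two inequalities separately. For the lower bound, the triangle inequality together with the fact that $\omega$ acts by isometries gives $d_K(x,\omega^n x)\le n\,d_K(x,\omega x)$, so $\tfrac1n d_K(x,\omega^n x)\le d_K(x,\omega x)$ for every $x$ and $n$; computing the asymptotics of the cross ratio $[a_n,x,\omega^n x,b_n]$, where $\omega^n x\sim c\,\lambda^n v_+$ and the far boundary point $b_n\to v_+$, yields $d_K(x,\omega^n x)=n\log\lambda+o(n)$, whence letting $n\to\infty$ gives $d_K(x,\omega x)\ge\log\rho(\omega)$ for all $x$. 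For the upper bound, the null pair $v_\pm$ has nonzero mutual pairing and so spans a hyperbolic plane $\Pi=\langle v_+,v_-\rangle$ meeting $K^\circ$ in the geodesic axis from $[v_-]$ to $[v_+]$; on this axis $\omega$ acts as $\mathrm{diag}(\lambda,\lambda^{-1})$, and a direct cross-ratio computation at any base point of the axis (using the Klein-model normalization of $d_K$) shows the displacement is constant and equal to $\log\rho(\omega)$. Combining the two bounds completes the loxodromic case.

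The main obstacle is the convex-cone Perron–Frobenius input underlying the eigenvector claims: one must verify that the extremal eigenrays $v_\pm$ are genuinely future-pointing and null, hence lie in $\partial K$, and that they span a plane actually meeting the open cone, so that the axis computation takes place inside $K^\circ$. A related technical point is controlling the non-semisimple (unipotent) Jordan blocks attached to unit-modulus eigenvalues, which must be shown not to disturb the estimate $d_K(x,\omega^n x)=n\log\rho(\omega)+o(n)$. Both issues are precisely where the signature $(1,10)$ of $B$ and the identification of $K$ with the closed forward light cone enter, since they guarantee that the dominant behavior is carried by the single timelike direction and that reciprocal null pairs generate hyperbolic planes inside $K$.
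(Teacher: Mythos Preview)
The paper does not prove this proposition at all: it is stated with the citation \cite[Cor.~3.5]{McMullen:2002} and used as a black box, so there is no in-paper argument to compare against. Your sketch is essentially the standard proof (and is close to McMullen's own): identify the Hilbert metric on the interior of the forward light cone with the hyperbolic metric in the Klein model, reduce the infimum to the hyperbolic translation length, and then split by isometry type, computing the loxodromic displacement along the axis spanned by the two null eigenvectors.

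One point to tighten: the paper's Hilbert metric is defined as $d_K(x,y)=\log[a,x,y,b]$ with no factor of $\tfrac12$. Depending on the cross-ratio convention, the Klein-model identification gives $d_K=2\,d_{\mathbb H}$ rather than $d_K=d_{\mathbb H}$, and the axis computation then yields $2\log\lambda$ instead of $\log\lambda$. Since you invoke ``the Klein-model normalization of $d_K$'' without checking the constant, you should carry out the cross-ratio calculation explicitly on the axis $t\mapsto e^{t}v_{+}+e^{-t}v_{-}$ with the paper's stated definition and confirm the answer is exactly $\log\rho(\omega)$; otherwise the argument is correct in structure but may be off by a factor of~$2$. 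A second minor gap: in the parabolic case you assert $\inf_x d_K(x,\omega x)=0$ by ``pushing toward $v_0$'', but this requires an estimate (e.g.\ via horospheres) showing the displacement actually tends to zero, not merely stays bounded; it is true but deserves a line of justification.
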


One of the key advantages of the Hilbert metric on $K$ is that it satisfies the triangle inequality: for all $x,y,z \in K^\circ$,
\[ d_K(x,z) \le d_K(x,y) + d_K(y,z).\]
This immediately implies a submultiplicativity property for spectral radius. 

\begin{prop}\label{P:sub-mul}
For $\omega_1, \omega_2 \in W(E_10)$, 
\[ \rho(\omega_1 \omega_2) \;\le\; \rho(\omega_1) \rho(\omega_2).\]
\end{prop}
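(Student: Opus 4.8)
The plan is to reduce the multiplicative inequality to an additive one by taking logarithms and invoking McMullen's identity (Proposition~\ref{P:logSp}), $\log\rho(\omega)=\inf_{x\in K^\circ} d_K(x,\omega x)$, together with the fact that each $\sigma(\omega)$ acts on $K^\circ$ as a $d_K$--isometry. The first step I would carry out is a purely pointwise displacement estimate. Fix any base point $x\in K^\circ$; the triangle inequality for $d_K$ gives
\[
d_K(x,\omega_1\omega_2 x)\;\le\; d_K(x,\omega_1 x)+d_K(\omega_1 x,\omega_1\omega_2 x),
\]
and, because $\omega_1$ is a $d_K$--isometry, $d_K(\omega_1 x,\omega_1\omega_2 x)=d_K(x,\omega_2 x)$. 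Hence
\[
d_K(x,\omega_1\omega_2 x)\;\le\; d_K(x,\omega_1 x)+d_K(x,\omega_2 x)
\qquad\text{for every } x\in K^\circ.
\]

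Next I would take the infimum over $x\in K^\circ$. The left-hand side becomes $\log\rho(\omega_1\omega_2)$ by Proposition~\ref{P:logSp}, so
\[
\log\rho(\omega_1\omega_2)\;\le\;\inf_{x\in K^\circ}\bigl(d_K(x,\omega_1 x)+d_K(x,\omega_2 x)\bigr).
\]
To finish, one wants to bound the right-hand infimum above by $\log\rho(\omega_1)+\log\rho(\omega_2)$; exponentiating and using monotonicity of $\log$ would then yield the stated bound $\rho(\omega_1\omega_2)\le\rho(\omega_1)\rho(\omega_2)$.

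The main obstacle is precisely this last step, and it is not merely bookkeeping: for arbitrary functions the infimum of a sum satisfies only $\inf(f+g)\ge\inf f+\inf g$, so the pointwise triangle estimate does not by itself deliver the desired upper bound. Closing the gap requires producing a single base point $x$ (or a sequence of such points) that is simultaneously near displacement--minimizing for $\omega_1$ and for $\omega_2$. I expect the right tool is that for these loxodromic Lorentzian isometries the infimum in Proposition~\ref{P:logSp} is attained along a translation axis, reducing matters to a geometric alignment statement about axes inside the Tits cone; an alternative route is to pass to iterates $(\omega_1\omega_2)^m$ and compare stable translation lengths, where subadditivity is better behaved. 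Establishing the existence of a common minimizer (or a suitable asymptotic substitute) is where the genuine content lies: the triangle inequality supplies only the pointwise estimate, and the interchange of infimum and sum is the delicate point on which the proof turns.
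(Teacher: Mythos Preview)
You have correctly identified the crux, and your caution is warranted: passing from the pointwise bound $d_K(x,\omega_1\omega_2 x)\le d_K(x,\omega_1 x)+d_K(x,\omega_2 x)$ to the conclusion $\log\rho(\omega_1\omega_2)\le\log\rho(\omega_1)+\log\rho(\omega_2)$ would require $\inf_x\bigl(f(x)+g(x)\bigr)\le\inf_x f(x)+\inf_x g(x)$, which fails in general. The paper's own proof commits exactly this error: it applies the triangle inequality and then asserts that one may take ``the infimum over all $x\in K^\circ$ to both terms separately on the right hand side,'' which is precisely the invalid step you flagged.

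More to the point, the gap cannot be repaired, because the proposition is false as stated. Take $\omega_1=s_0$ and $\omega_2=s_1s_2\cdots s_9\in W(A_9)$. Then $\rho(\omega_1)=1$ (a single reflection has eigenvalues $\pm1$) and $\rho(\omega_2)=1$ (an element of the finite group $W(A_9)\cong S_{10}$ has only roots of unity as eigenvalues), yet $\omega_1\omega_2=s_0s_1\cdots s_9$ is the Coxeter element of $W(E_{10})$, whose spectral radius is Lehmer's number $\lambda_L\approx1.17628>1$. Hence $\rho(\omega_1\omega_2)>\rho(\omega_1)\,\rho(\omega_2)$. Your proposed repairs via common axes or stable translation lengths necessarily fail on this example: each of $\omega_1,\omega_2$ has Hilbert translation length zero (each fixes points of $K^\circ$), but no point of $K^\circ$ is fixed by both, and their product is genuinely loxodromic.
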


\begin{proof}
By the triangle inequality, \[ d_K(x, \omega_1\omega_2 x) \;\le\; d_K(x, \omega_2 x) + d_K(\omega_2 x, \omega_1( \omega_2 x)).\] 
Taking the infimum over all $x\in K^\circ$ on the left-hand side gives
\[ \inf_{x \in K^\circ} d_K(x, \omega_1\omega_2 x) \;\le \; d_K(x, \omega_2 x) + d_K(\omega_2 x, \omega_1 \omega_2 x).\] 
Again, taking the infimum  over all $x \in K^\circ$ to both terms separately on the right hand side, we have 
\[\inf_{x \in K^\circ} d_K(x, \omega_1\omega_2 x) \;\le \; \inf_{x \in K^\circ} d_K(x, \omega_2 x) + \inf_{y \in K^\circ} d_K(y, \omega_1 y).\]
Applying Proposition~\ref{P:logSp} to each term gives 
\[  \log \rho(\omega_1 \omega_2)\; \le\; \log \rho(\omega_1) + \log\rho(\omega_2),\] which is equivalent to the desired ineqaulity.
\end{proof}

Let $\omega \in W(E_{10})$ be given by a reduced word
\[
\omega = g_1 g_2 \cdots g_m, \qquad g_i \in S.
\]
We say that $\omega'$ is a \emph{subword} of $\omega$ if there exist 
indices $1 \le i_1 < i_2 < \cdots < i_k \le m$ such that
$\omega' = g_{i_1} g_{i_2} \cdots g_{i_k}$.

\medskip
Let $\pi: K^\circ \to X$ denote the natural projection sending each 
$x\in K^\circ$ to the unique point $\pi(x)\in X$ in its 
$W(E_{10})$-orbit. 
By McMullen~\cite[Section~4]{McMullen:2002}, one obtains the following 
useful conclusion, stated here in a form adapted to our setting.

\begin{prop}[{\cite[Section~4]{McMullen:2002}}]\label{P:mcmullen}
Suppose $\omega'$ is a subword of $\omega$. Then 
\[
\rho(\omega') \le \rho(\omega).
\]
\end{prop}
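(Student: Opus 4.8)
The plan is to reduce the statement to the deletion of a single generator and then read off the inequality from the Hilbert-metric description of $\log\rho$ in Proposition~\ref{P:logSp}. First I would record that $\rho$ is a class function: since $\sigma$ represents $W(E_{10})$ by matrices and conjugate group elements have similar matrices, $\rho(\alpha s\beta)=\rho(s\beta\alpha)$ and $\rho(\alpha\beta)=\rho(\beta\alpha)$ for all $\alpha,\beta\in W(E_{10})$ and $s\in S$. Writing an arbitrary subword as the outcome of deleting letters one at a time, and absorbing the ambient word by a cyclic conjugation so that the deleted letter is moved to the front, it suffices to prove the single inequality
\[
\rho(\gamma)\ \le\ \rho(s\gamma),\qquad \gamma\in W(E_{10}),\ s\in S,
\]
where $s=s_i$ is a simple reflection whose mirror $H_i$ is a wall of the fundamental chamber $F$, and then to iterate. (Note that Proposition~\ref{P:sub-mul} runs the wrong way here, giving only upper bounds $\rho(\alpha s\beta)\le\rho(\alpha)\rho(\beta)$, so submultiplicativity alone cannot separate $\omega$ from $\omega'$.)

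Second, the geometric engine I would build is a \emph{folding retraction} across $H_i$. Define $r_i\colon K^\circ\to \overline{H_i^{+}}\cap K^\circ$ by $r_i(y)=y$ if $y\in\overline{H_i^{+}}$ and $r_i(y)=s_i y$ if $y\in H_i^{-}$. Because $s_i$ is a $d_K$-isometry fixing $H_i\cap K^\circ$ pointwise, because straight segments are Hilbert geodesics with length additive across the wall, and by the triangle inequality, $r_i$ is $1$-Lipschitz: for $y\in H_i^{+}$ and $z\in H_i^{-}$ the segment $\overline{yz}$ meets $H_i$ at a point $w$, so that $d_K(w,z)=d_K(w,s_i z)$ and hence $d_K(y,s_i z)\le d_K(y,w)+d_K(w,s_i z)=d_K(y,z)$; the same-side cases are immediate. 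This retraction encodes the principle that, measured from the $F$-side, crossing the mirror can only lengthen distances.

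Third, I would combine the folding with Proposition~\ref{P:logSp}. Using that $s_i$ is an isometry, $d_K(x,s_i\gamma x)=d_K(s_i x,\gamma x)$, so the displacement of $s_i\gamma$ is governed by configurations straddling $H_i$. The illustrative case is when $\rho(s_i\gamma)>1$ and the translation axis of $s_i\gamma$ meets $H_i$: at a crossing point $w\in H_i$ one has $s_i w=w$, whence
\[
\log\rho(s_i\gamma)=d_K(w,s_i\gamma w)=d_K(w,\gamma w)\ \ge\ \inf_{y\in K^\circ} d_K(y,\gamma y)=\log\rho(\gamma),
\]
which is exactly the desired bound. The role of the folding lemma is to feed a displacement-minimizing sequence for $s_i\gamma$ through $r_i$, producing base points on the $F$-side at which the $\gamma$-displacement does not exceed the $s_i\gamma$-displacement; passing to the infimum then yields $\log\rho(\gamma)\le\log\rho(s_i\gamma)$, and iterating over the deleted letters gives $\rho(\omega')\le\rho(\omega)$ for every subword.

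The hard part will be controlling on which side of the mirror the orbit lands. The naive pointwise comparison $d_K(x,\gamma x)\le d_K(x,s_i\gamma x)$ holds only when $\gamma x$ lies on the same side of $H_i$ as $x$ and genuinely reverses otherwise, so the non-equivariant map $r_i$ cannot be inserted blindly; one must show that the infimum defining $\log\rho(s_i\gamma)$ is approached along straddling configurations (equivalently, that the minimal displacement is realized where $s_i$ acts trivially), which is precisely the content supplied by the Hilbert-metric analysis of McMullen's Section~4 in \cite{McMullen:2002}. The boundary case $\rho(s_i\gamma)=1$ is not automatic and would be treated by the same infimum argument rather than an axis, observing that then $\inf_x d_K(x,s_i\gamma x)=0$ and the straddling/folding estimate forces $\inf_y d_K(y,\gamma y)=0$ as well. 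This side-control step is where I expect essentially all the difficulty to concentrate.
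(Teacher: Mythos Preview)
The paper does not supply its own proof; it cites McMullen's Section~4, and the sentence immediately preceding the proposition introduces the $1$-Lipschitz projection $\pi:K^\circ\to X$ onto the full fundamental chamber, which is McMullen's tool. Your single-wall folding $r_i$ is the one-reflection shadow of this $\pi$, so the geometric instinct is right, but the reduction you build on it has a genuine gap.

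The step ``it suffices to prove $\rho(\gamma)\le\rho(s\gamma)$ for $\gamma\in W(E_{10})$, $s\in S$, and then iterate'' is not merely hard to verify---it is false. Applying the same inequality with $s\gamma$ in place of $\gamma$ yields $\rho(s\gamma)\le\rho(s^2\gamma)=\rho(\gamma)$, hence $\rho(\gamma)=\rho(s\gamma)$ for all $\gamma,s$, forcing $\rho\equiv1$ on all of $W(E_{10})$ and contradicting the existence of Lehmer's number in $\Lambda_1$. So the difficulty you flag at the end is not a technical side-control issue to be overcome; the target statement itself is wrong. The underlying reason your iteration breaks is that after one deletion the remaining word need not be reduced, and then the side hypotheses of your folding lemma can genuinely fail: from the reduced word $s_1s_2s_1$ delete $s_2$ to get $s_1s_1$, and the next deletion would require $d_K(x,s_1x)\le d_K(x,x)=0$. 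The correct route, which the paper signals by introducing $\pi$, is not to pass to the spectral infimum after each deletion but to establish a single displacement inequality $d_K(x,\omega'x)\le d_K(x,\omega x)$ for $x$ in the fundamental chamber, using the reducedness of the \emph{original} word $g_1\cdots g_m$ to keep the relevant orbit points on the correct sides of all walls at once. This is what the global retraction $\pi$ (as opposed to a single $r_i$) delivers, via the billiard-unfolding argument in McMullen's cited section.
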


\subsection{Spectral Emergence Sequence}\label{SS:spectralG}

Let $\{\Lambda_k(\widehat I^{(k)})\}_{k\ge1}$ denote the sequence of
spectral subsets associated with a directed path in the
$s_0$--growth graph (cf.~Section~\ref{SS:spec-growth}).
This sequence records the emergence of new spectral values
as admissible triples are successively adjoined.

\begin{itemize}
\item[(1)] The initial node corresponds to
\[
\widehat I^{(1)} = \{(1,2,3)\} \in T_1.
\]

\item[(2)] For each $k\ge2$, the ordered set of triples
$\widehat I^{(k)}$ is obtained by adjoining a new triple
$J\in T$ to the beginning of $\widehat I^{(k-1)}=\{I_1,\dots,I_{k-1}\}$:
\[
\widehat I^{(k)} = \{J, I_1, \dots, I_{k-1}\}.
\]
Equivalently, $\widehat I^{(k)}$ extends $\widehat I^{(k-1)}$
by one admissible triple in the recursive construction
of $T_k$ from $T_{k-1}$ (see Section~\ref{SSS:recursive}).

\item[(3)] For each $k\ge1$, let
$\kappa_{\widehat I^{(k)}}$ denote the corresponding product of
reflections.  It is the unique minimal representative in its
double coset
\[
W(A_9)\,\kappa_{\widehat I^{(k)}}\,W(A_9).
\]

\item[(4)] The associated spectral subset
$\Lambda_k(\widehat I^{(k)})$ consists of the spectral radii
of all elements $\omega\in W(E_{10})$
that belong to the intersection
\[
\mathcal{D}_{k,k}\ \cap\
W(A_9)\,\kappa_{\widehat I^{(k)}}\,W(A_9),
\]
that is, those elements whose minimal $s_0$--length equals $k$
and which lie in the double coset of $\kappa_{\widehat I^{(k)}}$.
\end{itemize}

The sequence
$\{\Lambda_k(\widehat I^{(k)})\}_{k\ge1}$
thus describes the successive appearance of spectral radii
along a fixed directed path in the $s_0$--growth graph,
and forms the analytic counterpart of the combinatorial
branching structure developed in
Section~\ref{SS:spec-growth}.
\medskip
Define
\[
\mathcal{D}_k(\widehat I^{(k)})\ :=\
\mathcal{D}_{k,k}\ \cap\
W(A_9)\,\kappa_{\widehat I^{(k)}}\,W(A_9),
\]
so that
\[
\Lambda_k(\widehat I^{(k)}) \;=\;
\{\,
\rho(\omega)
\;:\;
\omega \in \mathcal{D}_k(\widehat I^{(k)})
\,\}.
\]
\begin{lem}\label{L:maxE}
For each $\widehat I^{(k)}\in T_k$, there exist elements
$\omega_k,\,\omega_k' \in \mathcal{D}_k(\widehat I^{(k)})$ such that
\[
\rho(\omega_k)
\;=\;
\max \Lambda_k(\widehat I^{(k)})
\qquad\text{and}\qquad
\rho(\omega_k')
\;=\;
\min \Lambda_k(\widehat I^{(k)}).
\]
Moreover, the element $\omega_k$ with maximal spectral radius is
conjugate to one of the form $\kappa_{\widehat I^{(k)}}\eta_k$,
where $\eta_k\in W(A_9)$:
\[
[\omega_k] = [\,\kappa_{\widehat I^{(k)}}\eta_k\,].
\]
\end{lem}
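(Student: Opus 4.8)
The plan is to split the statement into two essentially independent parts: the attainment of the extrema, which is a soft finiteness observation, and the normal form of the maximizer, which is the cyclic--conjugacy identity $\rho(xy)=\rho(yx)$ applied inside the double coset.

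\textbf{Attainment of $\max$ and $\min$.} The decisive point I would stress is that the parabolic subgroup is \emph{finite}: $W(A_9)\cong S_{10}$ has order $10!$. Consequently the double coset $W(A_9)\,\kappa_{\widehat I^{(k)}}\,W(A_9)$ is a finite subset of $W(E_{10})$ (of cardinality at most $(10!)^2$), and its subset $\mathcal{D}_k(\widehat I^{(k)})=\mathcal{D}_{k,k}\cap W(A_9)\,\kappa_{\widehat I^{(k)}}\,W(A_9)$ is therefore finite. Hence $\Lambda_k(\widehat I^{(k)})=\{\rho(\omega):\omega\in\mathcal{D}_k(\widehat I^{(k)})\}$ is a finite subset of $\mathbb{R}_{>1}$. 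A nonempty finite set of reals attains its supremum and infimum, so there are $\omega_k,\omega_k'\in\mathcal{D}_k(\widehat I^{(k)})$ with $\rho(\omega_k)=\max\Lambda_k(\widehat I^{(k)})$ and $\rho(\omega_k')=\min\Lambda_k(\widehat I^{(k)})$, provided the set is nonempty.

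\textbf{Normal form of the maximizer.} Since $\omega_k$ lies in the double coset, I would write $\omega_k=\rho_L\,\kappa_{\widehat I^{(k)}}\,\rho_R$ with $\rho_L,\rho_R\in W(A_9)$, as guaranteed by Lemma~\ref{L:kappaDecomposition} and Proposition~\ref{P:minimal-kappa}. Setting $\eta_k:=\rho_R\rho_L\in W(A_9)$ gives the identity $\omega_k=\rho_L\,(\kappa_{\widehat I^{(k)}}\eta_k)\,\rho_L^{-1}$, so that $[\omega_k]=[\kappa_{\widehat I^{(k)}}\eta_k]$. Because $\rho$ is a class function on $W(E_{10})$, this already yields the asserted relation $\rho(\omega_k)=\rho(\kappa_{\widehat I^{(k)}}\eta_k)$. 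In fact one can take the maximizer to be literally $\kappa_{\widehat I^{(k)}}\eta_k$: the $s_0$--length is constant, equal to $k$, on the entire double coset (by the definition of $\mathcal{D}_k$ preceding~\eqref{E:Dnn}), so $\kappa_{\widehat I^{(k)}}\eta_k\in\mathcal{D}_k$, and membership in $F_k$ depends only on the conjugacy class, so $[\kappa_{\widehat I^{(k)}}\eta_k]=[\omega_k]\in F_k$; hence $\kappa_{\widehat I^{(k)}}\eta_k\in\mathcal{D}_k(\widehat I^{(k)})$ and realizes the maximum.

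\textbf{Expected main obstacle.} The two steps above are short once finiteness and class--invariance are in hand; the only genuine content is the nonemptiness of $\mathcal{D}_k(\widehat I^{(k)})$, needed so that the extrema exist at all. This is the place where the recursive construction of the growth graph must be used: I would argue that the node $\widehat I^{(k)}$ lying on a directed path satisfies $h_{s_0}(\kappa_{\widehat I^{(k)}})=k$, so that the canonical representative $\kappa_{\widehat I^{(k)}}$ itself belongs to $\mathcal{D}_{k,k}$ and witnesses $\mathcal{D}_k(\widehat I^{(k)})\neq\emptyset$. Equivalently, among the conjugacy--invariant values $\{h_{s_0}(\kappa_{\widehat I^{(k)}}\eta):\eta\in W(A_9)\}$ (all $\le k$ since every such element lies in $\mathcal{D}_k$) at least one equals $k$. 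Establishing this minimality at the level of the conjugacy class, rather than merely $|\kappa_{\widehat I^{(k)}}|_{s_0}=k$, is the subtle point and the natural pressure point of the argument; everything else reduces to the finiteness of $W(A_9)$ and the identity $\rho(xy)=\rho(yx)$.
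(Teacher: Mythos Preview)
Your proof is correct and follows essentially the same route as the paper: finiteness of $W(A_9)\cong S_{10}$ forces the double coset (hence $\Lambda_k(\widehat I^{(k)})$) to be finite, giving attainment, and the cyclic rewrite $\rho_L\kappa_{\widehat I^{(k)}}\rho_R=\rho_L(\kappa_{\widehat I^{(k)}}\rho_R\rho_L)\rho_L^{-1}$ yields $[\omega_k]=[\kappa_{\widehat I^{(k)}}\eta_k]$ with $\eta_k=\rho_R\rho_L$. You are more explicit than the paper both in spelling out the conjugation and in flagging nonemptiness of $\mathcal{D}_k(\widehat I^{(k)})$ (which the paper's proof also leaves implicit), but the substance is the same.
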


\begin{proof}
Since each double coset $W(A_9)\kappa_{\widehat I^{(k)}}W(A_9)$
is finite, $\Lambda_k(\widehat I^{(k)})$ attains both a maximum and
a minimum.  Let $\omega_k$ be the element achieving the maximal spectral
radius.  Because $\omega_k$ has $s_0$--length $k$ and
$\kappa_{\widehat I^{(k)}}$ is the unique minimal representative
of its double coset, there exists $\eta_k\in W(A_9)$ such that
$[\omega_k]=[\,\kappa_{\widehat I^{(k)}}\eta_k\,]$.
\end{proof}

\begin{lem}\label{L:subwords}
Let $\widehat I^{(k)}=\{J,I_1,\dots,I_{k-1}\}$.
Then for every $\omega\in\mathcal{D}_k(\widehat I^{(k)})$
there exists $\sigma\in W(A_9)$ such that
\[
[\omega] = [\,\sigma\,\kappa_J\,\omega_{k-1}\,],
\]
where $\omega_{k-1}$ is the element of
$\mathcal{D}_{k-1}(\widehat I^{(k-1)})$
with maximal spectral radius.
Moreover, $\sigma\,\kappa_J\,\omega_{k-1}$ is expressed in reduced word form.
\end{lem}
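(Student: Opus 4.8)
The plan is to prove the two assertions separately: first produce a conjugate of $\omega$ of the shape $\sigma\,\kappa_J\,\omega_{k-1}$ (the conjugacy identity), and then arrange the auxiliary factor $\sigma$ so that the displayed product is length--additive (the reducedness). Throughout I use that both $\rho$ and the $s_0$--level are class functions, so I am free to replace $\omega$ by any conjugate and $\omega_{k-1}$ by any representative of its class lying in $\mathcal{D}_{k-1}(\widehat I^{(k-1)})$.

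For the conjugacy identity I start from the definition $\omega\in W(A_9)\,\kappa_{\widehat I^{(k)}}\,W(A_9)$ together with the factorization $\kappa_{\widehat I^{(k)}}=\kappa_J\,\kappa_{\widehat I^{(k-1)}}$, writing $\omega=\rho_L\,\kappa_J\,\kappa_{\widehat I^{(k-1)}}\,\rho_R$ with $\rho_L,\rho_R\in W(A_9)$. Conjugating by $\rho_L^{-1}$ gives $[\omega]=[\,\kappa_J\,\kappa_{\widehat I^{(k-1)}}\,\tau\,]$ with $\tau:=\rho_R\rho_L\in W(A_9)$. By Lemma~\ref{L:maxE} I may take $\omega_{k-1}=\kappa_{\widehat I^{(k-1)}}\eta_{k-1}$ with $\eta_{k-1}\in W(A_9)$, since this element lies in $\mathcal{D}_{k-1}(\widehat I^{(k-1)})$ and realizes the maximal spectral radius in its class. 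Setting $\sigma:=\eta_{k-1}^{-1}\tau\in W(A_9)$, the element $\sigma\,\kappa_J\,\omega_{k-1}=\eta_{k-1}^{-1}\,(\tau\,\kappa_J\,\kappa_{\widehat I^{(k-1)}})\,\eta_{k-1}$ is conjugate to $\tau\,\kappa_J\,\kappa_{\widehat I^{(k-1)}}$, which cyclically equals $\kappa_J\,\kappa_{\widehat I^{(k-1)}}\,\tau$; hence $[\sigma\,\kappa_J\,\omega_{k-1}]=[\omega]$, which is the first assertion.

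It remains to make this expression reduced, and this is the main obstacle. The $s_0$--skeleton is already under control: since the common conjugacy class has $s_0$--level $k$, every representative has $s_0$--length $\ge k$, while the displayed word contributes $0+1+(k-1)=k$ occurrences of $s_0$ (none from $\sigma$, one from the reflection $\kappa_J$, and $k-1$ from $\omega_{k-1}$). Thus $|\sigma\,\kappa_J\,\omega_{k-1}|_{s_0}=k$ and no braid relation can lower the $s_0$--count. What requires work is the $W(A_9)$--part, i.e.\ ruling out cancellation among the letters $s_1,\dots,s_9$ across the three factors. Here I use that $\kappa_{\widehat I^{(k)}}$ and $\kappa_{\widehat I^{(k-1)}}$ are Bruhat--minimal double--coset representatives, so they lie in $W^{A_9}\cap{}^{A_9}W$; consequently $\kappa_J\,\omega_{k-1}=\kappa_{\widehat I^{(k)}}\eta_{k-1}$ is already length--additive, $\ell(\kappa_{\widehat I^{(k)}}\eta_{k-1})=\ell(\kappa_{\widehat I^{(k)}})+\ell(\eta_{k-1})$, and $\ell(\kappa_J\,\omega_{k-1})>\ell(\omega_{k-1})$ (the reflection $\kappa_J$ raises the $s_0$--length from $k-1$ to $k$, so by Lemma~2.2.10 one has $\omega_{k-1}<_{\mathrm{Bruhat}}\kappa_J\,\omega_{k-1}$).

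The delicate step is prepending $\sigma$ without shortening, and this is where the argument must be pinned down. The factorization $\omega=\rho_L\kappa_J\kappa_{\widehat I^{(k-1)}}\rho_R$ is not unique: $\rho_L,\rho_R$ may be modified by the double--coset stabilizer $W(A_9)\cap \kappa_{\widehat I^{(k)}}\,W(A_9)\,\kappa_{\widehat I^{(k)}}^{-1}$, and the conjugate representative of $[\omega]$ may also be varied. I plan to exploit this freedom to replace $\sigma$ by the distinguished minimal--length representative of its coset relative to that stabilizer parabolic, for which the standard length decomposition of the double coset $W(A_9)\,d\,W(A_9)$ attached to its minimal element $d=\kappa_{\widehat I^{(k)}}$ (see the references in Section~\ref{SS:algebraic-structure}) yields $\ell(\sigma\,\kappa_J\,\omega_{k-1})=\ell(\sigma)+\ell(\kappa_J\,\omega_{k-1})$. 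Combining this with the additivity of $\kappa_J\,\omega_{k-1}$ above produces a reduced expression in which a reduced word for $\omega_{k-1}$ occurs as a terminal subword, as needed for the subword comparison of Proposition~\ref{P:mcmullen}. The crux is precisely the compatibility of the conjugacy constraint $[\omega]=[\sigma\,\kappa_J\,\omega_{k-1}]$ with selecting $\sigma$ in the length--minimizing coset, which forces an explicit analysis of the stabilizer of the double coset of $\kappa_{\widehat I^{(k)}}$.
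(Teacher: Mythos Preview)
Your conjugacy derivation is correct and is essentially the paper's argument rearranged: the paper writes $\omega=\alpha\,\kappa_J\,\kappa_{\widehat I^{(k-1)}}\,\beta$ and conjugates by $\beta^{-1}\eta_{k-1}$ in one step to obtain $[\omega]=[\,\eta_{k-1}^{-1}\beta\alpha\,\kappa_J\,\kappa_{\widehat I^{(k-1)}}\eta_{k-1}\,]$, then sets $\sigma=\eta_{k-1}^{-1}\beta\alpha$. You instead conjugate by $\rho_L^{-1}$, use a cyclic shift, and then conjugate by $\eta_{k-1}$; the end result is the same element $\sigma\,\kappa_{\widehat I^{(k)}}\,\eta_{k-1}$ in the same conjugacy class.

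On the ``reduced word form'' clause you are considerably more careful than the paper. The paper's entire justification is the single phrase ``since $\kappa_J\kappa_{\widehat I^{(k-1)}}$ is the unique minimal element in its double coset, we may reduce the prefactor $\eta_{k-1}^{-1}\beta\alpha$ to a single $\sigma\in W(A_9)$,'' which at face value only says that a product of three $W(A_9)$--elements is again in $W(A_9)$; it does not establish the triple length--additivity $\ell(\sigma\,\kappa_{\widehat I^{(k)}}\,\eta_{k-1})=\ell(\sigma)+\ell(\kappa_{\widehat I^{(k)}})+\ell(\eta_{k-1})$. You correctly isolate this as the real issue: minimality of $\kappa_{\widehat I^{(k)}}$ gives additivity on each side separately, but not across both sides simultaneously when the stabilizer $W(A_9)\cap \kappa_{\widehat I^{(k)}}W(A_9)\kappa_{\widehat I^{(k)}}^{-1}$ is nontrivial.

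Your proposed remedy---exploit the nonuniqueness of $(\rho_L,\rho_R)$ to move $\sigma$ into a distinguished coset of the stabilizer---is the natural thing to try, and you are right that modifying $(\rho_L,\rho_R)$ by an element of the stabilizer still produces a conjugate of $\omega$. What remains genuinely open in your write--up (and is absent from the paper as well) is the verification that some such choice simultaneously satisfies the conjugacy constraint $[\omega]=[\sigma\kappa_J\omega_{k-1}]$ \emph{and} lands $\sigma$ in the minimal--length coset needed for additivity; the two requirements interact, since shifting $\sigma$ through the stabilizer alters the right factor away from the fixed $\eta_{k-1}$. You flag this as ``the crux \dots\ which forces an explicit analysis of the stabilizer,'' and that is an honest assessment: neither your proposal nor the paper's proof closes this step, so your treatment is strictly more informative about where the difficulty lies.
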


\begin{proof}
Let $\omega_{k-1}$ and $\eta_{k-1}\in W(A_9)$ be as in
Lemma~\ref{L:maxE}, so that
$[\omega_{k-1}]=[\,\kappa_{\widehat I^{(k-1)}}\eta_{k-1}\,]$.
If $\omega\in\mathcal{D}_k(\widehat I^{(k)})$, then
$\omega=\alpha\,\kappa_J\,\kappa_{\widehat I^{(k-1)}}\,\beta$
for some $\alpha,\beta\in W(A_9)$.
Conjugating by $\beta^{-1}\eta_{k-1}$ gives
\[
[\omega]
=
[\,\eta_{k-1}^{-1}\beta\alpha\,\kappa_J\,
  \kappa_{\widehat I^{(k-1)}}\,\eta_{k-1}\,].
\]
Since $\kappa_J\kappa_{\widehat I^{(k-1)}}$
is the unique minimal element in its double coset,
we may reduce the prefactor
$\eta_{k-1}^{-1}\beta\alpha$ to a single
$\sigma\in W(A_9)$, obtaining
\[
[\omega]=[\,\sigma\,\kappa_J\,\omega_{k-1}\,].
\]
\end{proof}

\begin{prop}\label{P:spectral-inequality}
Let $\{\Lambda_k(\widehat I^{(k)})\}_{k\ge1}$ be the sequence of
spectral subsets associated with a directed path in the
$s_0$--growth graph.
Then for every $\rho\in\Lambda_k(\widehat I^{(k)})$,
\[
\max \Lambda_{k-1}(\widehat I^{(k-1)})
\;\le\;
\rho
\;\le\;
\min \Lambda_{k+1}(\widehat I^{(k+1)}).
\]
\end{prop}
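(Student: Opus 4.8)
The plan is to reduce the two-sided bound to a single monotonicity statement---namely that the maximum spectral radius at level $k-1$ is dominated by every spectral radius at level $k$ along the path---and then to obtain the upper bound by applying that same statement one level higher. Throughout I would use that $\rho$ is a class function on $W(E_{10})$ together with the subword monotonicity of Proposition~\ref{P:mcmullen}.

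First I would establish the lower bound. Fix $\rho=\rho(\omega)\in\Lambda_k(\widehat I^{(k)})$ with $\omega\in\mathcal{D}_k(\widehat I^{(k)})$. Since the chosen directed path realizes $\widehat I^{(k)}=\{J,I_1,\dots,I_{k-1}\}$ as an extension of $\widehat I^{(k-1)}=\{I_1,\dots,I_{k-1}\}$, Lemma~\ref{L:subwords} supplies $\sigma\in W(A_9)$ with $[\omega]=[\,\sigma\,\kappa_J\,\omega_{k-1}\,]$, where $\omega_{k-1}$ is the element of $\mathcal{D}_{k-1}(\widehat I^{(k-1)})$ of maximal spectral radius (Lemma~\ref{L:maxE}) and the displayed product is reduced. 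Concatenating reduced words for $\sigma$, $\kappa_J$, and $\omega_{k-1}$ thus yields a reduced expression for $\sigma\,\kappa_J\,\omega_{k-1}$, and deleting the letters coming from $\sigma$ and $\kappa_J$ exhibits $\omega_{k-1}$ as a subword. Proposition~\ref{P:mcmullen} then gives $\rho(\omega_{k-1})\le\rho(\sigma\,\kappa_J\,\omega_{k-1})$, and since $\rho$ is conjugacy invariant the right-hand side equals $\rho(\omega)=\rho$. Because $\rho(\omega_{k-1})=\max\Lambda_{k-1}(\widehat I^{(k-1)})$, this proves $\max\Lambda_{k-1}(\widehat I^{(k-1)})\le\rho$, which is exactly the left inequality.

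For the upper bound I would simply bootstrap the inequality just proved, applying it with $k$ replaced by $k+1$ to conclude $\max\Lambda_k(\widehat I^{(k)})\le\rho'$ for every $\rho'\in\Lambda_{k+1}(\widehat I^{(k+1)})$. Since each double coset is finite, $\min\Lambda_{k+1}(\widehat I^{(k+1)})$ is attained (Lemma~\ref{L:maxE}), so taking the minimum over $\rho'$ gives $\max\Lambda_k(\widehat I^{(k)})\le\min\Lambda_{k+1}(\widehat I^{(k+1)})$. As $\rho\le\max\Lambda_k(\widehat I^{(k)})$ holds trivially for $\rho\in\Lambda_k(\widehat I^{(k)})$, the right inequality follows.

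The main obstacle lies entirely in the first step: one must be confident that the \emph{reduced word form} asserted in Lemma~\ref{L:subwords} genuinely realizes $\omega_{k-1}$ as a subword of a reduced expression for the conjugate $\sigma\,\kappa_J\,\omega_{k-1}$, so that Proposition~\ref{P:mcmullen} is applicable, and that conjugacy invariance of $\rho$ legitimately transfers the resulting bound from the conjugate back to $\omega$ itself. Once this subword identification is secured, the remainder is formal, the upper bound being nothing more than the lower bound read one level higher combined with the trivial estimate $\rho\le\max\Lambda_k(\widehat I^{(k)})$.
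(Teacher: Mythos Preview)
Your proposal is correct and follows essentially the same approach as the paper: invoke Lemma~\ref{L:subwords} to exhibit $\omega_{k-1}$ as a subword of a reduced expression conjugate to $\omega$, apply Proposition~\ref{P:mcmullen} together with conjugacy invariance of $\rho$ to obtain the lower bound, and then shift the index by one to get the upper bound. Your write-up is in fact slightly more explicit than the paper's in spelling out the bootstrapping step $\rho\le\max\Lambda_k(\widehat I^{(k)})\le\min\Lambda_{k+1}(\widehat I^{(k+1)})$, and you correctly flag that the applicability of Proposition~\ref{P:mcmullen} rests on the reduced-word assertion in Lemma~\ref{L:subwords}.
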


\begin{proof}
If $\rho\in\Lambda_k(\widehat I^{(k)})$, then there exists
$\omega\in\mathcal{D}_k(\widehat I^{(k)})$ with $\rho=\rho(\omega)$.
By Lemma~\ref{L:subwords}, a conjugate of $\omega_{k-1}$ appears as
a subword of $\omega$.  Since the spectral radius is invariant under
conjugation, Proposition~\ref{P:mcmullen} implies
\[
\rho \;\ge\; \rho(\omega_{k-1})
=\max \Lambda_{k-1}(\widehat I^{(k-1)}).
\]
Applying the same argument to $\Lambda_{k+1}(\widehat I^{(k+1)})$
yields the upper bound, completing the proof.
\end{proof}

\medskip
Proposition~\ref{P:spectral-inequality} reveals that the spectral
evolution along any directed path in the $s_0$--growth graph
is \emph{monotone and constrained}.
Each new level $k$ inherits its spectral range from the previous level,
bounded below by $\max\Lambda_{k-1}(\widehat I^{(k-1)})$
and above by $\min\Lambda_{k+1}(\widehat I^{(k+1)})$.
In geometric terms, as one adjoins successive reflections
$\kappa_J$ in the recursive construction,
the corresponding action on the Tits cone
expands the spectral radius only within a controlled interval.
This monotone interlacing of spectral sets
reflects the ordered structure of minimal double--coset representatives
and anticipates the continuous scaling behavior
governed by the Hilbert metric.

\begin{thm}[Spectral Monotonicity]\label{T:monotonicity}
Let $\{\Lambda_k(\widehat I^{(k)})\}_{k\ge1}$ be the sequence of
spectral subsets associated with a directed path in the
$s_0$--growth graph. Then the sequences of minima and maxima are non-decreasing in $s_0$--level $k$:
\[
\min \Lambda_k(\widehat I^{(k)})\;\le\;
\min \Lambda_{k+1}(\widehat I^{(k+1)}), \qquad
\max \Lambda_k(\widehat I^{(k)})\;\le\;
\max \Lambda_{k+1}(\widehat I^{(k+1)})
\quad \text{for all } k \ge 1.
\]
\end{thm}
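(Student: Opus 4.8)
The plan is to deduce Theorem~\ref{T:monotonicity} directly from the interlacing inequalities already established in Proposition~\ref{P:spectral-inequality}. The heart of the matter is the chain
\[
\max \Lambda_{k-1}(\widehat I^{(k-1)})
\;\le\;
\rho
\;\le\;
\min \Lambda_{k+1}(\widehat I^{(k+1)})
\]
valid for every $\rho \in \Lambda_k(\widehat I^{(k)})$. Since each double coset $W(A_9)\,\kappa_{\widehat I^{(k)}}\,W(A_9)$ is finite, Lemma~\ref{L:maxE} guarantees that both $\min \Lambda_k(\widehat I^{(k)})$ and $\max \Lambda_k(\widehat I^{(k)})$ are attained, so all the quantities below are genuine extrema rather than infima or suprema. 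The strategy is to specialize the interlacing inequality to the extremal values $\rho$ at each level and read off the two monotonicity statements.

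First I would prove the statement for maxima. Fix $k \ge 1$ and apply Proposition~\ref{P:spectral-inequality} at level $k+1$: for every $\rho \in \Lambda_{k+1}(\widehat I^{(k+1)})$ one has $\max \Lambda_k(\widehat I^{(k)}) \le \rho$, where I have used the inequality $\max \Lambda_{(k+1)-1}(\widehat I^{((k+1)-1)}) \le \rho$ with the index shift $k+1 \mapsto k$. Choosing $\rho = \max \Lambda_{k+1}(\widehat I^{(k+1)})$, which exists by Lemma~\ref{L:maxE}, yields
\[
\max \Lambda_k(\widehat I^{(k)})
\;\le\;
\max \Lambda_{k+1}(\widehat I^{(k+1)}),
\]
which is exactly the second claimed inequality.

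Next I would prove the statement for minima. Again fix $k \ge 1$ and apply Proposition~\ref{P:spectral-inequality} at level $k$: for every $\rho \in \Lambda_k(\widehat I^{(k)})$ one has $\rho \le \min \Lambda_{k+1}(\widehat I^{(k+1)})$. Choosing $\rho = \min \Lambda_k(\widehat I^{(k)})$, which again exists by Lemma~\ref{L:maxE}, gives
\[
\min \Lambda_k(\widehat I^{(k)})
\;\le\;
\min \Lambda_{k+1}(\widehat I^{(k+1)}),
\]
establishing the first inequality. Combining the two displays completes the proof.

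I do not anticipate a serious obstacle here, since the theorem is essentially a corollary of Proposition~\ref{P:spectral-inequality}; the only point requiring care is the bookkeeping of index shifts when reading the two-sided bound, and in particular making sure that the lower bound $\max \Lambda_{k-1} \le \rho$ is the one invoked for the maxima statement (applied at level $k+1$) while the upper bound $\rho \le \min \Lambda_{k+1}$ is the one invoked for the minima statement (applied at level $k$). One should also note the edge case $k=1$: the chain in Proposition~\ref{P:spectral-inequality} references $\Lambda_{0}(\widehat I^{(0)})$, but this lower endpoint is never used in the argument above—only the upper bound $\rho \le \min \Lambda_2(\widehat I^{(2)})$ and the level-$2$ application of the lower bound enter—so both monotonicity inequalities hold for all $k \ge 1$ as stated.
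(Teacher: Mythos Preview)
Your proof is correct and follows the same route as the paper: both deduce the theorem immediately from Proposition~\ref{P:spectral-inequality}. The paper is slightly terser, observing only that $\max \Lambda_k(\widehat I^{(k)}) \le \min \Lambda_{k+1}(\widehat I^{(k+1)})$ and letting both monotonicity statements fall out at once, whereas you treat the minima and maxima separately---but the substance is identical.
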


\begin{proof}
This follows immediately from Proposition~\ref{P:spectral-inequality}, which ensures that
\[
\max \Lambda_k(\widehat I^{(k)}) \;\le\;
\min \Lambda_{k+1}(\widehat I^{(k+1)}).
\]
\end{proof}

\subsection{Levelwise Spectral Envelopes}\label{SS:spectralE}

For the initial node $\widehat I^{(1)}=\{(1,2,3)\}$,
\[
\Lambda_1([\widehat I^{(1)}])
\;=\;
\bigl\{\rho(\omega)\;:\;
\omega\in\mathcal D_{1,1}\cap W(A_9)\,\kappa_{\widehat I^{(1)}}\,W(A_9)\bigr\}.
\]
Since the double coset $W(A_9)\,\kappa_{\widehat I^{(1)}}\,W(A_9)$ is finite, 
the set $\Lambda_1([\widehat I^{(1)}])$ attains both a maximum and a minimum $>1$.\footnote{%
We exclude $\rho=1$, which arises from elliptic or parabolic elements through cyclotomic factors.}
Define
\[
\begin{aligned}
\Max_1([\widehat I^{(1)}]) &:= \max \Lambda_1([\widehat I^{(1)}]),\\
\Min_1([\widehat I^{(1)}]) &:= \min\{\rho\in\Lambda_1([\widehat I^{(1)}]) \;:\; \rho>1\}.
\end{aligned}
\]

By McMullen~\cite[Thm.~4.1]{McMullen:2002}, 
the minimum $\Min_1([\widehat I^{(1)}])$ equals 
\emph{Lehmer's number} $\lambda_L\approx1.17628$, 
the largest real root of the reciprocal polynomial
\[
L(t)\;=\;t^{10}+t^9-t^7-t^6-t^5-t^4-t^3+t+1.
\]
By direct computation, the maximum $\Max_1([\widehat I^{(1)}])=M_1$ is the largest real root of
\[
P(t)\;=\;t^{10}-t^9-t^8+t^7-t^5+t^3-t^2-t+1.
\]
In fact, there are exactly $11$ distinct spectral radii $>1$ in $\Lambda_1([\widehat I^{(1)}])$.
Table~\ref{T:Lambda_1} lists these values (rounded), together with their degrees and a compact encoding of the
\emph{reciprocal} minimal polynomials: for a reciprocal polynomial of even degree $2d$,
we list the first $d{+}1$ coefficients $(a_0,\dots,a_d)$, since the remaining $d$ are determined by palindromy.

\begin{table}[h]
\centering
\begin{tabular}{|c|c|c|l|}
\hline
$k$ & spectral radius $\rho_k$ & degree & reciprocal coeffs $(a_0,\dots,a_{d/2})$ \\ \hline\hline
1  & $1.17628$ & $10$ & $1,\,1,\,0,\,-1,\,-1,\,-1$ \\
2  & $1.21639$ & $10$ & $1,\,0,\,0,\,0,\,-1,\,-1$ \\
3  & $1.26123$ & $10$ & $1,\,0,\,-1,\,0,\,0,\,-1$ \\
4  & $1.28064$ & $\;\,8$ & $1,\,0,\,0,\,-1,\,-1$ \\
5  & $1.29349$ & $10$ & $1,\,0,\,-1,\,-1,\,0,\,1$ \\
6  & $1.35098$ & $10$ & $1,\,-1,\,0,\,0,\,-1,\,1$ \\
7  & $1.36000$ & $\;\,8$ & $1,\,-1,\,1,\,-2,\,1$ \\
8  & $1.38364$ & $10$ & $1,\,-1,\,0,\,-1,\,1,\,-1$ \\
9  & $1.40127$ & $\;\,6$ & $1,\,0,\,-1,\,-1$ \\
10 & $1.42501$ & $\;\,8$ & $1,\,-1,\,0,\,-1,\,1$ \\
11 & $1.43100$ & $10$ & $1,\,-1,\,-1,\,1,\,0,\,-1$ \\
\hline
\end{tabular}
\caption{Spectral radii in $\Lambda_1([\widehat I^{(1)}])$.  
Each row corresponds to the largest real root of a reciprocal polynomial determined by the listed first half of coefficients.}
\label{T:Lambda_1}
\end{table}

\medskip
For higher levels $n\ge2$, we aggregate the nodewise extrema over the equivalence classes $T_n/\!\sim$.
For each node class $[\widehat I]\in T_n/\!\sim$, define
\[
\begin{aligned}
\Max_n([\widehat I]) &:= \max \Lambda_n([\widehat I]),\\
\Min_n([\widehat I]) &:= \min \Lambda_n([\widehat I]).
\end{aligned}
\]

If $[\widehat J_1]\in T_{n-1}/\!\sim$ and $[\widehat J_2]\in T_{n+1}/\!\sim$
are connected to $[\widehat I]\in T_n/\!\sim$ by a directed path
\[
[\widehat J_1] \;\longrightarrow\; [\widehat I] \;\longrightarrow\; [\widehat J_2],
\]
then Proposition~\ref{P:spectral-inequality} yields
\[
\Max_{n-1}([\widehat J_1])
\;\le\;
\Min_n([\widehat I])
\;\le\;
\Max_n([\widehat I])
\;\le\;
\Min_{n+1}([\widehat J_2]).
\]

We now define the \emph{levelwise spectral envelopes}
\[
\begin{aligned}
M_n &:= \min_{[\widehat I]\in T_n/\!\sim}\ \Max_n([\widehat I]),
&\qquad&\text{(``bottleneck maximum'' at level $n$)},\\
m_n &:= \max_{[\widehat I]\in T_n/\!\sim}\ \Min_n([\widehat I]),
&\qquad&\text{(``bottleneck minimum'' at level $n$)}.
\end{aligned}
\]
By Proposition~\ref{P:spectral-inequality}, both sequences $(m_n)$ and $(M_n)$ are nondecreasing,
and every $\rho\in\Lambda_n$ lies in the global spectral band
\[
m_n \;\le\; \rho \;\le\; M_n.
\]
Since each level $T_n/\!\sim$ is finite, we can bound the levelwise envelopes
via the pathwise inequalities:
\[
M_{n-1}\ \le\ m_n\ \le\ M_n
\qquad(n\ge 2),
\]
where
\[
M_{n-1}=\min_{[\widehat J]\in T_{n-1}/\!\sim}\Max_{n-1}([\widehat J]),
\qquad
m_n=\max_{[\widehat I]\in T_n/\!\sim}\Min_n([\widehat I]).
\]

\medskip
In particular, $M_{n-1}$ provides a computable lower threshold for the
level-$n$ spectrum: every $\rho\in\Lambda_n$ satisfies $\rho\ge M_{n-1}$.
Thus, by enumerating $T_{n-1}/\!\sim$ and computing the nodewise maxima
$\Max_{n-1}$, we obtain an effective lower cutoff for~$\Lambda_n$.
Conversely, evaluating the nodewise maxima at level~$n$ yields
$M_n$, an upper envelope for~$\Lambda_n$.

\begin{thm}[Spectral Exhaustion Theorem]\label{T:exhaustion} For every $C >1$, there eixsts an integer $N\ge 1$ such that
\[ \{ \rho(\omega) \in \mathbb{R}\ :\ \omega \in W(E_{10}), \ \rho(\omega)\le C\} \subset \bigcup_{n\le N} \Lambda_n. \]
\end{thm}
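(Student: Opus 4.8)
The plan is to deduce the inclusion from the properness of the $W(E_{10})$--action in the Hilbert metric together with the discreteness of its length spectrum, and then to identify the cutoff $N$ effectively through the levelwise envelopes $M_n$ already constructed.

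First I would reduce the claim to the loxodromic part of the spectrum. Every Lorentzian isometry satisfies $\rho(\omega)\ge 1$, so the left-hand set is $\{1\}\cup\bigl(\{\rho(\omega):\omega\in W(E_{10})\}\cap(1,C]\bigr)$. The value $1$ is already captured at low level, since $\rho(s_0)=1$ and $h_{s_0}(s_0)=1$, so $1\in\Lambda_1$. By construction $\rho$ is a class function and $\Lambda_n$ is precisely the set of spectral radii of conjugacy classes of $s_0$--level $n$; hence each spectral radius $\rho(\omega)$ lies in $\Lambda_{h_{s_0}([\omega])}$. Thus it suffices to produce $N\ge1$ with $\Lambda_n\cap(1,C]=\varnothing$ for all $n>N$, i.e.\ to show that only finitely many $s_0$--levels are realized by conjugacy classes with $1<\rho(\omega)\le C$.

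The heart of the argument is a finiteness statement. By Proposition~\ref{P:logSp}, $\log\rho(\omega)=\inf_{x\in K^\circ}d_K(x,\omega x)$ is the Hilbert translation length of $\omega$, so $\rho(\omega)\le C$ is exactly the condition that $\omega$ translates by at most $\log C$. In McMullen's framework $W(E_{10})$ acts discretely by isometries on the interior of the Tits cone, which carries the finite--covolume hyperbolic structure of $\mathbb{H}^9$ (the bilinear form restricted to the span $V$ of the simple roots has signature $(1,9)$, and the $E_{10}$ chamber is a finite--covolume reflection simplex). In a finite--volume hyperbolic orbifold the loxodromic length spectrum is discrete with finite multiplicity below any bound: every closed geodesic avoids a fixed horoball neighborhood of each cusp and hence lies in a fixed compact core, on which properness of the action forces finiteness. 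Consequently there are only finitely many conjugacy classes with $1<\rho(\omega)\le C$. Since $h_{s_0}$ is a conjugacy invariant, these classes realize only finitely many levels; taking $N$ to be their maximum gives $\Lambda_n\cap(1,C]=\varnothing$ for $n>N$, and the asserted inclusion follows.

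The main obstacle is precisely this transport of the classical discreteness of the geodesic length spectrum into the Hilbert--metric setting of the Tits cone: one must use that the $E_{10}$ Weyl chamber has finite covolume in $\mathbb{H}^9$ and that Proposition~\ref{P:logSp} identifies $\log\rho$ with the geometric translation length, so that bounded spectral radius means bounded displacement. Care is also needed because parabolic and elliptic classes (with $\rho=1$) may occur at arbitrarily large $s_0$--level; these are harmless here only because the value $1$ has been handled separately. Finally, the envelope machinery yields an effective version of the cutoff: by Proposition~\ref{P:spectral-inequality} every $\rho\in\Lambda_n$ with $\rho>1$ satisfies $\rho\ge M_{n-1}$, and since $(M_n)$ is nondecreasing and, by the finiteness above, unbounded, one may take $N$ to be the least integer with $M_{N}>C$; enumerating $T_{N}/\!\sim$ and computing the nodewise maxima $\Max_N$ then exhibits such an $N$ explicitly.
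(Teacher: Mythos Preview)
Your argument is correct in substance but proceeds quite differently from the paper. The paper's proof is a two-line application of the internal combinatorics already developed: by Theorem~\ref{T:monotonicity} the envelopes $m_n$ are nondecreasing, and (using $M_{n-1}\le m_n$ together with Lemma~\ref{L:maxM}, which gives $M_k=M_1^k$) one has $m_n\to\infty$; hence some $N$ satisfies $m_N>C$, and the inclusion follows. You instead import external hyperbolic geometry: identify the Hilbert metric on the Tits cone with the hyperbolic metric on $\mathbb{H}^9$, note that $W(E_{10})$ is a finite-covolume reflection lattice there, and invoke the classical discreteness-with-finite-multiplicity of the loxodromic length spectrum for finite-volume hyperbolic orbifolds. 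Your route yields a stronger intermediate statement (only finitely many \emph{conjugacy classes}, not merely finitely many spectral values, below $C$) and is conceptually clean, at the cost of citing nontrivial facts about cusped lattices; the paper's route is entirely self-contained within the $s_0$--filtration and makes the cutoff $N$ computable directly from the nodewise maxima.

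One small slip: since the $E_{10}$ diagram is connected and simply laced, $s_0$ is conjugate to $s_1\in W(A_9)$, so $h_{s_0}(s_0)=0$, not $1$. Thus $1\in\Lambda_0$ rather than $\Lambda_1$; this is harmless for your argument since $\Lambda_0\subset\bigcup_{n\le N}\Lambda_n$ anyway. Also, in your final paragraph the clause ``$(M_n)$ is \dots\ unbounded by the finiteness above'' is circular as stated; if you want the effective version you should argue unboundedness of $M_n$ directly (e.g.\ via submultiplicativity as in Lemma~\ref{L:maxM}) rather than from the finiteness conclusion you are trying to sharpen.
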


\begin{proof}
By Theorem~\ref{T:monotonicity}, the level-wise spectral envelopes satisfy 
\[ m_1 \le m_2 \le \cdots \le m_n \le \cdots, \quad \text{with}\ \ \lim_{n \to \infty} m_n = \infty.\]
Hence, for any fixed $C>1$, there eixsts $N$ such that $m_N>C$, which implies that all spectral radii $\le C$ must appear in some $\Lambda_n$ for $n\le N$.
\end{proof}

\medskip
In terms of computation, it is nontrivial to determine whether a given
\(\omega \in W(A_9)\kappa_{\widehat I}W(A_9)\), with
\([\widehat I] \in T_n/\!\sim\), belongs to the \(s_0\)--level stratum \(F_n\).
To overcome this challenge, we compute a larger superset
\[
\tilde \Lambda_n
\ :=\
\bigl\{
\rho(\omega) \;:\;
\omega \in W(A_9)\,\kappa_{\widehat I}\,W(A_9),
\quad [\widehat I] \in T_n/\!\sim
\bigr\}.
\]
Since \(m_n \in [M_{n-1}, M_n]\), this superset \(\tilde \Lambda_n\) suffices to capture
all spectral radii in \(\Lambda_n\) up to the lower bound \(M_{n-1}\).

\medskip
To understand how new spectral values emerge across successive
\(s_0\)--levels, we define the extremal emerging radii at level~\(n\):
\[
\tilde m_n \;:=\;
\min\bigl( \tilde \Lambda_n \setminus \tilde \Lambda_{n-1} \bigr),
\qquad
M_n \;:=\; \max \tilde \Lambda_n.
\]
This perspective clarifies the spectral growth process.
As an example, we illustrate the levelwise spectral envelopes
for small~\(n\) in Table~\ref{T:Senvelops}.

\medskip
Notice that \(\tilde m_3\) is numerically slightly smaller than \(M_2\);
this occurs because \(\tilde m_3\) and \(M_2\) are drawn from
distinct directed paths in the \(s_0\)--growth graph. 
Let $\omega_\text{max} \in \mathcal{D}_{1,1}$ such that $\rho(\omega_\text{max}) = M_1$. Then by the monoid structure of $K$ given in Proposition~\ref{P:Demazure}, we have $\omega_\text{max}^k \in \mathcal{D}_{k,k}$. Since $\rho(\omega^k) = \rho(\omega)^k$, $k\ge 1$, from Proposition~\ref{P:sub-mul}, we have
\begin{lem} \label{L:maxM}
For each $k\ge 1$,
\[
M_k = M_1^k.
\]
\end{lem}
For other extreme values, our computations reveal the empirical inequalities
\[
\tilde m_k < M_{k-1}
\quad\text{and}\quad
\tilde m_k \approx \delta^{k-2} M_{k-1},
\qquad k \ge 3,
\]
where \(\delta := \tilde m_3/M_2 \approx 0.9335\).
Here ``\(\approx\)'' denotes numerical proximity rather than
an established asymptotic equivalence.
This scaling behavior suggests that the emergence of new spectral radii
is geometrically separated at higher levels.
Such exponential separation is reminiscent of displacement growth in Hilbert geometry; compare McMullen's metric perspective in~\cite{McMullen:2002}.

\begin{table}[h]
\centering
\begin{tabular}{|c|c|c|l|}
\hline
$\tilde m_i,\,M_i$ & spectral radius $\rho_k$ & degree & reciprocal coeffs $(a_0,\dots,a_{d/2})$ \\ \hline\hline
$\tilde m_1$ & $1.17628$ & $10$ & $1,\,1,\,0,\,-1,\,-1,\,-1$ \\
$M_1$ & $1.43100$ & $10$ & $1,\,-1,\,-1,\,1,\,0,\,-1$ \\
$\tilde m_2$ & $1.45799$ & $\;\,8$ & $1,\,0,\,-1,\,-1,\,0$ \\
$M_2$ & $2.04776$ & $10$ & $1,\,-3,\,3,\,-3,\,2,\,-1$ \\
$\tilde m_3$ & $1.91113$ & $10$ & $1,\,0,\,-2,\,-2,\,-1,\,-1$ \\
$M_3$ & $2.93035$ & $10$ & $1,\,-1,\,-4,\,-5,\,0,\,2$ \\
$\tilde m_4$ & $2.56366$ & $\;\,8$ & $1,\,-1,\,-2,\,-3,\,-4$ \\
$M_4$ & $4.19334$ & $10$ & $1,\,-3,\,-5,\,1,\,-2,\,-9$ \\
$\tilde m_5$ & $3.21255$ & $\;\,8$ & $1,\,-1,\,-4,\,-7,\,-7$ \\
$M_5$ & $6.00067$ & $10$ & $1,\,-6,\,-1,\,6,\,0,\,-1$ \\
\hline
\end{tabular}
\caption{Levelwise spectral envelopes.
Each entry corresponds to the largest real root of a reciprocal polynomial determined by the listed first half of coefficients.}
\label{T:Senvelops}
\end{table}

\medskip
These bounds enable complete enumeration of minimal polynomials for
\(\rho \in \Lambda_n\) below a given threshold.
To focus on new spectral contributions, we consider only
\emph{primitive} Salem numbers, i.e., those for which no positive root
\(\rho^{1/k}\) lies in \(\cup_n \Lambda_n\) for any \(k > 1\).

\medskip
Let \(\hat \Lambda_n\) denote the set of newly emerged primitive Salem numbers
at level~\(n\):
\[
\hat \Lambda_n
\ :=\
\{ \rho \in \tilde \Lambda_n \setminus \bigcup_{k < n} \tilde \Lambda_k\ :\ \rho \text{ is primitive.}\}.
\]
In Table~\ref{T:Lambda_1}, the eighth Salem number is not primitive. The first Salem number, $\rho_1$,is Lehmer's number---the largest real root of the polynomial \[ t^{10}+t^9-t^7-t^6-t^5-t^4-t^3+t+1.\] The minimal polynomial of $\rho_1^2$ is given by 
\[ t^{10}-t^9-t^7+t^6-t^5+t^4-t^3-t+1, \] which shows that $\rho_8=\rho_1^2$. 
Thus, the number of primitive Salem numbers at level one is \(|\hat \Lambda_1| = 10\).

Similarly, by checking the minimal polynomials of powers of Salem numbers, we find \(|\hat \Lambda_2| = 37\), \(|\hat \Lambda_3| = 180\),  \(|\hat \Lambda_4| = 866\) and  \(|\hat \Lambda_5| = 4100\). 

\vspace{1ex}
We include a complete list of all newly emerged primitive Salem numbers
in \(\tilde \Lambda_2\) in Appendix~\ref{B:tables}, along with the first 50 elements of \(\hat \Lambda_3\). The full lists of primitive Salem numbers in $\hat \Lambda_n, n=3,4,5$ are available in \texttt{https://www.math.fsu.edu/~kim/publication.html}.

\begin{rem}
The growth of \(\hat \Lambda_n\) appears to be exponential.
We observe:
\[
\frac{|\hat \Lambda_2|}{|\hat \Lambda_1|} = \frac{37}{10} = 3.7,
\quad
\frac{|\hat \Lambda_3|}{|\hat \Lambda_2|} = \frac{180}{37} \approx 4.86,
\quad
\frac{|\hat \Lambda_4|}{|\hat \Lambda_3|} = \frac{866}{180} \approx 4.81,
\quad
\frac{|\hat \Lambda_5|}{|\hat \Lambda_4|} = \frac{4100}{866} \approx 4.73.
\]
This suggests the heuristic:
\[
|\hat \Lambda_{n+1}| \approx \eta_n \cdot |\hat \Lambda_n|, \qquad \eta_n \approx 4-(n-1)/10
\qquad \text{for sufficiently large } n.
\]

\medskip
To gain further insight into the spectral density, we study the spacing
between consecutive newly emerging Salem numbers in
\[ \tilde \Lambda_n^\text{new} : = \tilde \Lambda_n \setminus \bigcup_{k<n} \tilde \Lambda_k.\]
Let \(\mathrm{Gap}_n\) denote the set of absolute differences between
successive elements of \(\tilde \Lambda_n^\text{new}\).
From computation, we observe:

\begin{table}[h]
\centering
\begin{tabular}{|c|c|c|}
\hline
$n$ & Average of $\mathrm{Gap}_n$& Standard Deviation of $\mathrm{Gap}_n$ \\ \hline\hline
$1$ & $0.0255$ & $0.0168$\\ \hline
$2$ & $0.0123$ & $0.0081$\\ \hline
$3$ & $0.0050$ & $0.0051$\\ \hline
$4$ & $0.0018$ & $0.0030$\\ \hline
$5$ & $0.0007$ & $0.0024$\\ \hline
\end{tabular}
\caption{Levelwise Gap distribution.}
\label{T:GapDis}
\end{table}
%
%
Moreover, the average gap appears to decay exponentially, as shown
in Figure~\ref{F:gap}. This suggests no arithmetic obstruction to
accumulation---another indication of the fractal-like density of
spectral radii within the Tits cone.

\end{rem}

\begin{figure}[h]
\centering
\includegraphics[width=0.7\textwidth]{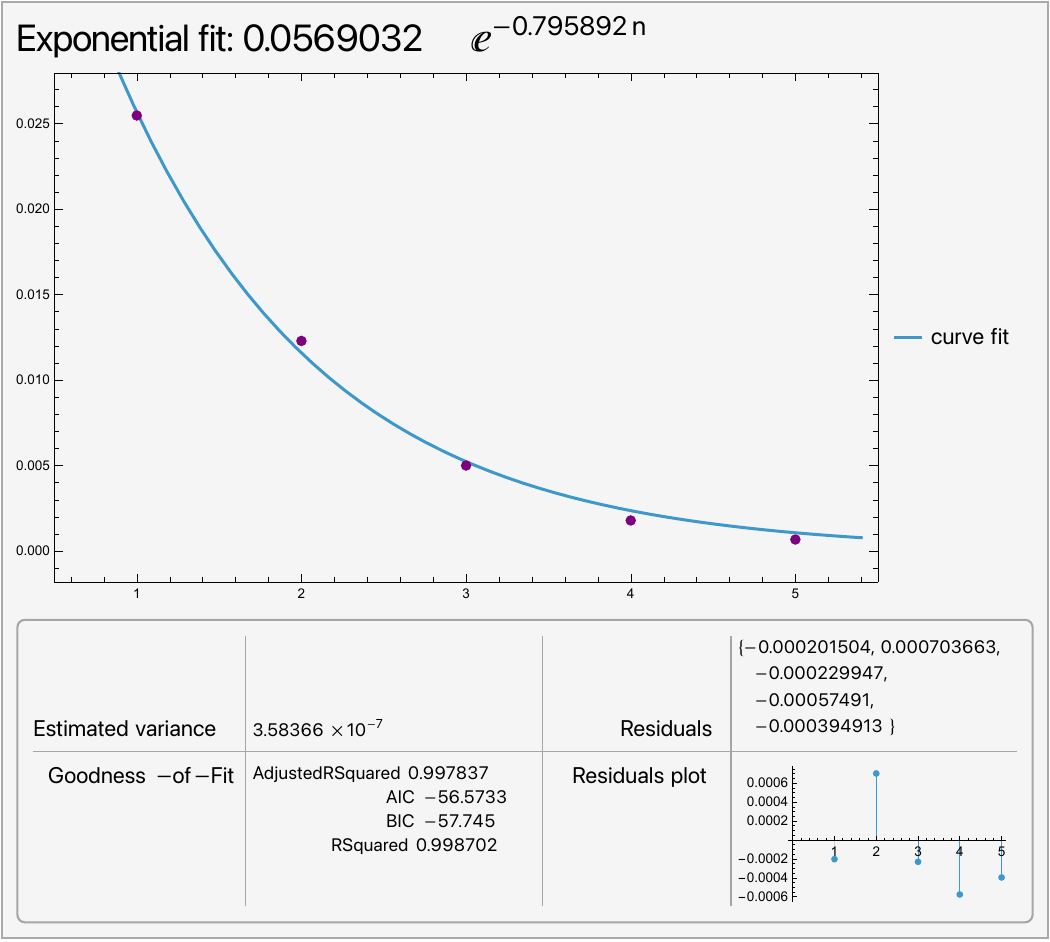}
\caption{Exponential decay fit to the average gaps in
\(\mathrm{Gap}_n\) between newly emerged Salem numbers in \(\tilde \Lambda_n^\text{new}\).
Purple dots show observed means; the curve is a best-fit exponential regression
obtained using Mathematica.}
\label{F:gap}
\end{figure}

\subsection{Hilbert--Metric Interpretation of Levelwise Growth}\label{SS:Hilbert}

The spectral envelopes $(m_n,M_n)$ introduced in 
Section~\ref{SS:spectralE} reflect the exponential growth rates of
elements in $W(E_{10})$ measured by their $s_0$--length.
We now interpret this growth geometrically in terms of
the Hilbert metric on the Tits cone.

\medskip
Let $\mathcal{C}\subset V$ be the Tits cone of $W(E_{10})$,
and let $\Omega\subset \mathbf{P}(V)$ denote its projectivization.
Choose an affine slice of $\Omega$ intersecting the interior of the
fundamental chamber in a bounded convex domain, so that the
\emph{Hilbert metric} $d_K$ is well defined on~$\Omega$.
Fix a basepoint $x$ in the interior of the fundamental chamber.

\medskip
For $\omega\in W(E_{10})$, define its \emph{Hilbert displacement} at a basepoint~$x$
by
\[
\delta_x(\omega)
\;:=\;
d_K\bigl(x,\,\omega\!\cdot\!x\bigr),
\]
where $d_K$ denotes the Hilbert metric on the interior $K^\circ$ of the Tits cone.
The \emph{Hilbert translation length} of~$\omega$ is
\[
\ell_K(\omega)
\;:=\;
\inf_{y\in K^\circ} d_K(y,\,\omega\!\cdot\!y).
\]
By McMullen~\cite[Cor.~3.5]{McMullen:2002}, one has $\ell_K(\omega)=\log\rho(\omega)$.

\medskip
Since each reflection in $W(E_{10})$ acts projectively on~$\Omega$ by
a cone--preserving linear transformation, 
the displacement $\delta(\omega)$ measures how far the element
$\omega$ moves the chamber in the hyperbolic direction determined by~$s_0$. Note that $\delta_x(\omega)$ depends on the basepoint, while $\ell_K(\omega)$ is a conjugation-invariant lower bound.

\medskip
For each level $n\ge1$, define the levelwise displacement bounds
\[
t_n
\;:=\;
\inf\{\delta(\omega)\;:\;\omega\in\mathcal{D}_{n,n}\},
\qquad
T_n
\;:=\;
\sup\{\delta(\omega)\;:\;\omega\in\mathcal{D}_{n,n}\}.
\]
Then the Hilbert balls
\[
B_n
\;:=\;
B_K(x,t_n)
\]
form a nested family
$B_1\subseteq B_2\subseteq\cdots$,
whose union fills $\Omega$.
Moreover, by construction,
the orbit points $\omega\!\cdot\!x$ with
$\omega\in\mathcal{D}_{n,n}$ all lie outside
$B_K(x,t_n-\varepsilon)$ for any $\varepsilon>0$;
that is, level~$n$ elements appear precisely beyond the
Hilbert radius~$t_n$.

\begin{thm}[Hilbert Ball Inclusion of Level-$n$ Elements]\label{T:hilbertBall}
Let $x$ be a point in the interior of the fundamental chamber.
Define $B_n := B_K(x, t_n) \subset \mathcal{C}$ to denote the Hilbert ball of radius.
\[
t_n := \inf\bigl\{\delta(\omega) \;:\; \omega \in \mathcal{D}_{n,n}\bigr\}.
\]
Then for any $\omega \in W(E_{10})$, if $\omega \cdot x \in B_n$,
the $s_0$--level of $\omega$ satisfies $h_{s_0}(\omega) \le n$.
\end{thm}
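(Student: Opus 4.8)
The plan is to argue by contrapositive, working with the open Hilbert ball $B_n=B_K(x,t_n)$, so that $\omega\cdot x\in B_n$ is equivalent to $\delta_x(\omega)=d_K(x,\omega\cdot x)<t_n$. I will show that a large conjugacy--level, $h_{s_0}(\omega)\ge n+1$, forces $\delta_x(\omega)\ge t_n$, hence $\omega\cdot x\notin B_n$. The two inputs I would lean on are McMullen's identity from Proposition~\ref{P:logSp}, which gives the \emph{basepoint-independent} lower bound
\[
\delta_x(\omega)\;\ge\;\ell_K(\omega)\;=\;\log\rho(\omega)\qquad\text{for every }x\in K^\circ,
\]
and the levelwise spectral monotonicity of Theorem~\ref{T:monotonicity}, encoded through the envelopes $m_k$.

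First I would convert the hypothesis $h_{s_0}(\omega)=m$ into a spectral statement. By definition $[\omega]\in F_m$, so $\rho(\omega)\in\Lambda_m$, and the global spectral band $m_k\le\rho\le M_k$ for $\rho\in\Lambda_k$ yields $\rho(\omega)\ge m_m$. Since $(m_k)$ is non-decreasing (Theorem~\ref{T:monotonicity}), the assumption $m\ge n+1$ gives $\rho(\omega)\ge m_m\ge m_{n+1}$. Combining this with the displacement bound above produces
\[
\delta_x(\omega)\;\ge\;\log\rho(\omega)\;\ge\;\log m_{n+1}.
\]
Next I would calibrate $t_n$ against the same envelopes. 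Every $\omega'\in\mathcal{D}_{n,n}$ satisfies $\delta_x(\omega')\ge\log\rho(\omega')\ge\log m_n$, so the infimum obeys the easy lower bound $t_n\ge\log m_n$. Granting the matching upper bound $t_n\le\log m_{n+1}$, the chain $\delta_x(\omega)\ge\log m_{n+1}\ge t_n$ closes the contrapositive.

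The hard part will be exactly this upper bound $t_n\le\log m_{n+1}$, and it is where the basepoint dependence of $\delta_x$ (as opposed to the conjugation-invariant $\ell_K$) becomes delicate: $t_n$ is an infimum of displacements at the \emph{fixed} point $x$, and $d_K(x,\omega'\cdot x)$ strictly exceeds $\ell_K(\omega')=\log\rho(\omega')$ unless $x$ sits near the translation axis of $\omega'$. I would control it in one of two ways: (a) invoke the nested-ball structure established just before the theorem, which asserts $B_1\subseteq B_2\subseteq\cdots$ calibrated to the spectral thresholds, so that $t_n$ is pinned between $\log m_n$ and $\log m_{n+1}$; or (b) exhibit, for each $n$, a near-minimizing element $\omega'_n\in\mathcal{D}_{n,n}$ whose axis passes within $o(1)$ of $x$, so that $\delta_x(\omega'_n)\to\log m_n\le\log m_{n+1}$. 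Approach (b) isolates the genuine content as a geometric claim about the distribution of axes of level-$n$ elements in the Tits cone; once it is granted, the remaining inequalities are formal consequences of Proposition~\ref{P:logSp}, Lemma~\ref{L:spec-projection}, and Theorem~\ref{T:monotonicity}.
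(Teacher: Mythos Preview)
The paper does not supply a proof of this theorem: it is stated immediately after the brief discussion that (i) asserts, without argument, the nesting $B_1\subseteq B_2\subseteq\cdots$ and (ii) records the tautology that orbit points $\omega\cdot x$ with $\omega\in\mathcal{D}_{n,n}$ lie outside $B_K(x,t_n-\varepsilon)$. The theorem appears to be offered as a geometric summary of that discussion, so there is no detailed argument in the paper to compare yours against.

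Your contrapositive route via $\delta_x(\omega)\ge\log\rho(\omega)\ge\log m_{n+1}$ is natural, but the step you yourself flag as ``the hard part,'' the upper bound $t_n\le\log m_{n+1}$, is a genuine gap and is not supplied by anything in the paper. The obstacle is structural: $t_n$ is a basepoint-dependent displacement infimum, while $\log m_{n+1}$ is a conjugacy-invariant translation length, and the only clean comparison available, $\delta_x(\omega')\ge\log\rho(\omega')$, bounds $t_n$ from \emph{below}, not above. Your fix (a) is circular---the nesting $t_1\le t_2\le\cdots$ is asserted in the paper, not proved, and in any case yields only $t_n\le t_{n+1}$, not $t_n\le\log m_{n+1}$. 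Your fix (b) is the honest formulation of what is missing---that some level-$n$ element has its translation axis passing within $o(1)$ of the fixed basepoint $x$---but this is a substantive geometric claim you do not establish, and for a generic $x$ in the chamber interior there is no a priori reason for any $\delta_x(\omega')$ to approach $\log\rho(\omega')$. So the proposal, as written, does not close.

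A smaller caveat: you invoke ``$\rho(\omega)\ge m_m$'' as though $m_m$ were the global minimum of $\Lambda_m$, but the paper defines $m_n:=\max_{[\widehat I]}\Min_n([\widehat I])$ as the maximum of nodewise minima. The paper does assert the global band $m_n\le\rho\le M_n$, but its justification via Proposition~\ref{P:spectral-inequality} is a pathwise statement, so this inequality also deserves scrutiny before you lean on it.
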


\medskip
This theorem highlights that the $s_0$--level filtration is not merely a combinatorial construct, but admits a geometric realization: the $s_0$--level of an element $\omega\in W(E_{10})$ provides a coarse bound on its hyperbolic distance from the base chamber in the Tits cone. In this way, the spectral growth hierarchy aligns with a nested system of Hilbert balls, offering a geometric model for the emergence of new spectral values and the expansion of reflection complexity at higher levels.

\medskip
By a theorem of McMullen~\cite[Prop.~4.1]{McMullen:2002}
(see also Bushell~\cite{Bushell:1973}),
for a cone--preserving linear transformation $A$ acting on~$\Omega$
the Hilbert displacement and spectral radius are logarithmically comparable:
there exist constants $c_1,c_2>0$, depending only on the cone,
such that
\[
c_1^{-1}\log\rho(A)
\;\le\;
d_K(x,A\!\cdot\!x)
\;\le\;
c_2\log\rho(A).
\]
Applying this to the elements of $\mathcal{D}_{n,n}$ yields
\[
c_1^{-1}\log m_n
\;\lesssim\;
t_n
\;\lesssim\;
T_n
\;\lesssim\;
c_2\log M_n.
\]
Hence, the Hilbert radii $(t_n,T_n)$ grow in lockstep with the
logarithmic spectral envelopes \[(\log m_n,\log M_n).\]
The $s_0$--level filtration of $W(E_{10})$ therefore induces a
\emph{geometric filtration} of the Tits cone by nested Hilbert balls,
in which each level $n$ corresponds to the maximal hyperbolic distance
realized by elements of $s_0$--length~$n$.

\medskip
This dual description reveals how the algebraic growth of
spectral radii in $\Lambda_n$ manifests as the hyperbolic
expansion of reflection walls $g\,s_0\,g^{-1}$ inside the Tits cone.
As $n\to\infty$, the balls $B_n$ exhaust~$\Omega$, providing a
geometric visualization of the spectral growth filtration
in the hyperbolic model of~$W(E_{10})$.

\begin{appendices}
\section{Spectral Radii in $W(E_{10})$}\label{B:tables}
This appendix lists primitive Salem numbers that appear as spectral radii
in the enlarged spectral sets \(\tilde\Lambda_2\) and \(\tilde\Lambda_3\),
introduced in Section~\ref{SS:spectralE}.
A Salem number \(\rho > 1\) is called \emph{primitive} if \(\rho^{1/k}\)
is not a Salem number for any integer \(k > 1\).
We exclude all non-primitive powers \(\rho^k\), \(k \ge 2\), from the tables.

Each entry in the tables corresponds to the largest real root of
a reciprocal minimal polynomial.
To conserve space, we list only the first half of the coefficients
\((a_0,\dots,a_d)\), as the remainder are determined by palindromy.
Entries are sorted in increasing order of spectral radius.
The table for \(s_0\)--level 2 is complete; the table for \(s_0\)--level 3
includes the first 50 primitive Salem numbers.

These spectral values represent the new growth of geometric complexity
within the \(W(E_{10})\) reflection group, as measured through the
spectral radius of elements in \(W(A_9)\kappa_{\widehat I}W(A_9)\).
We refer the reader to Section~\ref{SS:spectralE} and the spectral
growth filtration in Section~\ref{SS:spec-growth} for context and discussion.

{\footnotesize{
\begin{table}[h]
\centering
\begin{tabular}{|c|c|c|l||c|c|c|l|}
\hline
$k$ & spectral radius $\rho_k$ & degree & reciprocal coeffs  
& $k$ & spectral radius $\rho_k$ & degree & reciprocal coeffs  \\ \hline\hline
1 & $1.45799$ & $8$ & $  1,\,   0,\, -1,\, -1,\,   0$ &20 & $1.80979$ & $8$ & $  1,\, -1,\,   0,\, -2,\,   0$ \\
2 & $1.50614$ & $6$ & $  1,\, -1,\,   0,\, -1$ &21 & $1.81161$ & $8$ & $  1,\, -2,\,   0,\,   1,\, -1$ \\
3 & $1.52306$ & $8$ & $  1,\, -1,\, -1,\,   0,\,   1$ &22 & $1.83108$ & $6$ & $  1,\, -2,\,   0,\,   1$ \\
4 & $1.53293$ & $10$ & $  1,\, -1,\, -1,\,   0,\,   0,\,   1$ &23 & $1.83488$ & $8$ & $  1,\,   0,\, -1,\, -2,\, -3$ \\
5 & $1.54720$ & $8$ & $  1,\, -2,\,   2,\, -3,\,   3$ &24 & $1.86406$ & $8$ & $  1,\, -1,\, -2,\,   0,\,   2$ \\
6 & $1.55603$ & $6$ & $  1,\, -1,\, -1,\,   1$ &25 & $1.87574$ & $10$ & $  1,\, -2,\, -1,\,   3,\,   0,\, -3$ \\
7 & $1.58235$ & $6$ & $  1,\,   0,\, -1,\, -2$ &26 & $1.88320$ & $4$ & $  1,\, -2,\,   1$ \\
8 & $1.60545$ & $8$ & $  1,\, -2,\,   1,\,   0,\, -1$ &27 & $1.89911$ & $10$ & $  1,\, -2,\,   0,\,   0,\,   1,\, -1$ \\
9 & $1.63557$ & $6$ & $  1,\, -2,\,   2,\, -3$ &28 & $1.91650$ & $8$ & $  1,\, -1,\, -1,\, -1,\,   0$ \\
10 & $1.66105$ & $8$ & $  1,\, -2,\,   1,\, -1,\,   1$ &29 & $1.92063$ & $8$ & $  1,\, -3,\,   3,\, -2,\,   1$ \\
11 & $1.68491$ & $8$ & $  1,\, -1,\, -1,\,   0,\,   0$ &30 & $1.94686$ & $6$ & $  1,\, -1,\, -1,\, -1$ \\
12 & $1.69018$ & $10$ & $  1,\, -1,\, -2,\,   1,\,   1,\, -1$ &31 & $1.97209$ & $10$ & $  1,\, -2,\,   0,\,   0,\, -1,\,   3$ \\
13 & $1.69351$ & $8$ & $  1,\, -1,\,   0,\, -1,\, -1$  &32 & $1.97482$ & $6$ & $  1,\, -2,\,   1,\, -2$ \\
14 & $1.72208$ & $4$ & $  1,\, -1,\, -1$ &33 & $1.99400$ & $8$ & $  1,\, -2,\,   1,\, -2,\,   1$ \\
15 & $1.75310$ & $10$ & $  1,\,   0,\, -1,\, -1,\, -2,\, -3$ &34 & $1.99852$ & $10$ & $  1,\, -2,\,   1,\, -2,\,   1,\, -2$ \\
16 & $1.78164$ & $6$ & $  1,\, -1,\, -1,\,   0$ &35 & $2.01129$ & $8$ & $  1,\, -3,\,   3,\, -3,\,   3$ \\
17 & $1.79607$ & $8$ & $  1,\, -1,\, -1,\,   0,\, -1$ &36 & $2.01601$ & $10$ & $  1,\, -3,\,   2,\,   1,\, -3,\,   3$ \\
18 & $1.80017$ & $8$ & $  1,\, -3,\,   4,\, -5,\,   5$ &37 & $2.04249$ & $6$ & $  1,\, -3,\,   3,\, -3$ \\
19 & $1.80502$ & $10$ & $  1,\, -2,\,   0,\,   1,\, -1,\,   1$  & $$ & $$ & $  $ \\
\hline
\end{tabular}
\caption{Primitive Spectral radii in $\tilde \Lambda_2$.  
Each row corresponds to the largest real root of a reciprocal polynomial determined by the listed first half of coefficients.}
\label{T:Lambda_2}
\end{table}
}}

{\footnotesize{
\begin{table}[h]
\centering
\begin{tabular}{|c|c|c|l||c|c|c|l|}
\hline
$k$ & spectral radius $\rho_k$ & degree & reciprocal coeffs  
& $k$ & spectral radius $\rho_k$ & degree & reciprocal coeffs  \\ \hline\hline
1 & 1.91113 & 10 & 1,\,0,\,-2,\,-2,\,-1,\,-1 
& 26 & 2.17824 & 10 & 1,\,-3,\,3,\,-3,\,2,\,-1 \\
2 & 1.92679 & 8 & 1,\,0,\,-2,\,-2,\,-1 
& 27 & 2.18313 & 8 & 1,\,-1,\,-1,\,-2,\,-2 \\
3 & 1.94999 & 10 & 1,\,-1,\,-2,\,-1,\,1,\,3 
& 28 & 2.19565 & 6 & 1,\,-1,\,-1,\,-3 \\
4 & 1.95530 & 8 & 1,\,-2,\,-1,\,3,\,-1 
& 29 & 2.20113 & 6 & 1,\,-2,\,-1,\,2,\,-1 \\
5 & 1.98779 & 6 & 1,\,-2,\,-3,\,-2 
& 30 & 2.20346 & 10 & 1,\,-2,\,-1,\,1,\,0,\,1 \\
6 & 2.02203 & 8 & 1,\,-2,\,0,\,0,\,0 
& 31 & 2.20647 & 8 & 1,\,-2,\,-1,\,0,\,3 \\
7 & 2.04953 & 8 & 1,\,-1,\,-1,\,-1,\,-2 
& 32 & 2.21982 & 10 & 1,\,-3,\,2,\,0,\,-3,\,5 \\
8 & 2.05354 & 10 & 1,\,-2,\,-1,\,2,\,0,\,-1 
& 33 & 2.22587 & 6 & 1,\,-3,\,2,\,-1 \\
9 & 2.05632 & 10 & 1,\,-2,\,0,\,0,\,1,\,1 
& 34 & 2.22747 & 8 & 1,\,-1,\,-1,\,-2,\,-3 \\
10 & 2.06018 & 8 & 1,\,-3,\,2,\,1,\,-3 
& 35 & 2.22879 & 10 & 1,\,-1,\,-3,\,-1,\,2,\,3 \\
11 & 2.06973 & 8 & 1,\,-2,\,0,\,1,\,-3 
& 36 & 2.23749 & 8 & 1,\,-2,\,-1,\,1,\,0 \\
12 & 2.08102 & 4 & 1,\,-1,\,-2,\,-1 
& 37 & 2.23890 & 8 & 1,\,0,\,-2,\,-2,\,-4 \\
13 & 2.10429 & 8 & 1,\,-1,\,-1,\,-1,\,-3 
& 38 & 2.24275 & 8 & 1,\,-3,\,2,\,-1,\,1 \\
14 & 2.11854 & 8 & 1,\,0,\,-2,\,-2,\,-3 
& 39 & 2.24781 & 10 & 1,\,-1,\,-2,\,-1,\,-1,\,-1 \\
15 & 2.12150 & 8 & 1,\,-2,\,0,\,0,\,-1 
& 40 & 2.24885 & 8 & 1,\,-2,\,0,\,-1,\,0 \\
16 & 2.12479 & 10 & 1,\,-1,\,-3,\,0,\,2,\,1 
& 41 & 2.25119 & 8 & 1,\,-1,\,-2,\,-1,\,-1 \\
17 & 2.12992 & 10 & 1,\,-2,\,0,\,0,\,-1,\,0 
& 42 & 2.25646 & 6 & 1,\,-2,\,0,\,-1 \\
18 & 2.13466 & 8 & 1,\,-1,\,-1,\,-2,\,-1 
& 43 & 2.27019 & 10 & 1,\,-3,\,1,\,2,\,-3,\,0 \\
19 & 2.13709 & 8 & 1,\,-2,\,0,\,1,\,1 
& 44 & 2.27501 & 10 & 1,\,-2,\,0,\,0,\,-2,\,0 \\
20 & 2.14511 & 8 & 1,\,-1,\,-2,\,-1,\,1 
& 45 & 2.27671 & 8 & 1,\,-3,\,2,\,0,\,-2 \\
21 & 2.14850 & 10 & 1,\,-2,\,-2,\,4,\,1,\,-5 
& 46 & 2.27965 & 10 & 1,\,-2,\,0,\,0,\,2,\,0 \\
22 & 2.15372 & 4 & 1,\,-3,\,3,\,-3 
& 47 & 2.28496 & 10 & 1,\,-1,\,-1,\,-2,\,-3,\,-4 \\
23 & 2.16021 & 8 & 1,\,-1,\,-3,\,0,\,3 
& 48 & 2.29663 & 4 & 1,\,-2,\,0,\,-2 \\
24 & 2.16782 & 10 & 1,\,-1,\,-2,\,-2,\,1,\,2 
& 49 & 2.29949 & 10 & 1,\,-2,\,-2,\,3,\,1,\,-3 \\
25 & 2.17367 & 8 & 1,\,-2,\,0,\,1,\,-1 
& 50 & 2.30812 & 10 & 1,\,-2,\,0,\,-4,\,-5,\,-5 \\
\hline
\end{tabular}
\caption{First 50 primitive Salem spectral radii in $\tilde \Lambda_3$. Each row lists two entries side-by-side. The reciprocal coefficients column shows coefficients of $a_0, \dots a_{d/2}$ where $d=$ degree.}
\label{T:Primitive_Level3}
\end{table}
}
}

\vfill\eject
\section{SageMath Implementation}\label{A:code}

\noindent
This appendix provides a SageMath implementation of the recursive construction of the sets
$T_n$ described in Section~\ref{SSS:recursive}. The code defines the reflections
$\kappa_I$ for triples $I\subset\{1,\dots,10\}$, computes the transformation
data $\mathrm{Trans}(\widehat I)$, applies the adjacency and level filters, and
identifies matrices up to the double--coset action of $W(A_9)$ by canonicalizing
under independent row/column permutations (indices $1,\dots,10$; the $e_0$ row/column is fixed).

\medskip
\noindent\textbf{Usage.}
The initial set $T_2$ corresponds to the two double cosets in
Corollary~\ref{C:small-s0}.
Running \texttt{T3 = build\_T\_next(T2)} constructs $T_3$; iterating builds $T_{n+1}$ from $T_n$.
This procedure is straightforward to implement computationally. A SageMath implementation is provided below.

{\footnotesize{

\medskip
\noindent\textbf{SageMath code.}

\begin{lstlisting}
from sage.all import *
ZZ = IntegerRing()

J = diagonal_matrix([1] + [-1]*10)

def bilinear(x, y):
    x = vector(ZZ, x)
    y = vector(ZZ, y)
    return x.dot_product(J * y)

def reflection_matrix(v):
    v = vector(ZZ, v)
    den = bilinear(v, v)               
    assert den != 0
    outer = v.column() * (J * v).row() # 11x1 times 1x11 -> 11x11
    return identity_matrix(ZZ, 11) - (2/den) * outer

# --- Triples and reflections kappa_I ---
indices = list(range(1, 11))  # {1,...,10}
ALL_TRIPLES = [frozenset(S) for S in Subsets(indices, 3)]

def kappa_I_matrix(I):
    """
    I is a 3-subset of {1,...,10}. Reflection through v = e0 - sum_{i in I} ei.
    """
    v = [0]*11
    v[0] = 1
    for t in I:
        v[t] -= 1
    return reflection_matrix(v)

def Trans(tuple_of_triples):
    """
    Matrix of kappa_{I_1} ... kappa_{I_n} (left-multiplying, column convention).
    """
    M = identity_matrix(ZZ, 11)
    for I in tuple_of_triples:
        M = kappa_I_matrix(I) * M
    return M

# --- Adjacency filter: |I_$\cap$ I_{k+1}| in {0,1} ---
def adj_gap1_ok(tuple_of_triples):
    for A, B in zip(tuple_of_triples, tuple_of_triples[1:]):
        if len(A & B) not in (0, 1):
            return False
    return True

# --- Canonicalization under W(A9): independent row/col perms on 1..10 (0 fixed) ---
def canonicalize_double_coset(M):
    def sort_columns(N):
        # keys for cols 1..10: (entry at row 0, then rows 1..10)
        keys = [(N[0,j], tuple(N[1:11, j]), j) for j in range(1,11)]
        keys.sort()
        perm_old = [k[2] for k in keys]          # old indices in new order
        P = zero_matrix(ZZ, 11); P[0,0] = 1
        for new, old in enumerate(perm_old, start=1):
            P[old, new] = 1
        return N * P

    def sort_rows(N):
        # keys for rows 1..10: (entry at col 0, then cols 1..10)
        keys = [(N[i,0], tuple(N[i,1:11]), i) for i in range(1,11)]
        keys.sort()
        perm_old = [k[2] for k in keys]
        P = zero_matrix(ZZ, 11); P[0,0] = 1
        for new, old in enumerate(perm_old, start=1):
            P[new, old] = 1
        return P * N

    A = M
    for _ in range(6):        # typically stabilizes in 1--2 passes
        A1 = sort_columns(A)
        A2 = sort_rows(A1)
        if A2 == A:
            break
        A = A2
    return A

# --- Base set T2 from Corollary (two ordered pairs) ---
T2 = [
    (frozenset([1,4,5]), frozenset([1,2,3])),
    (frozenset([4,5,6]), frozenset([1,2,3])),
]

# --- Precompute canonical forms for level 2 double cosets (id, level-1, two level-2) ---
def precompute_level_le2_canon():
    forbidden = []

    # level 0: identity
    M_id = identity_matrix(ZZ, 11)
    forbidden.append(canonicalize_double_coset(M_id))

    # level 1: any kappa_I; all W(A9)-conjugate -> pick {1,2,3}
    M_1 = Trans((frozenset([1,2,3]),))
    forbidden.append(canonicalize_double_coset(M_1))

    # level 2: two classes from Corollary C:small-s0
    M_2a = Trans((frozenset([1,4,5]), frozenset([1,2,3])))
    M_2b = Trans((frozenset([4,5,6]), frozenset([1,2,3])))
    forbidden.append(canonicalize_double_coset(M_2a))
    forbidden.append(canonicalize_double_coset(M_2b))

    # immutable keys for fast comparison
    return { tuple(M.list()) for M in forbidden }

FORBIDDEN_LE2 = precompute_level_le2_canon()

def is_level3_by_exclusion(tup3):
    """
    tup3 = (I, I1, I2).
    Keep iff its canonicalized matrix is NOT in the precomputed level  2 set.
    """
    M = Trans(tup3)
    Mcanon = canonicalize_double_coset(M)
    key = tuple(Mcanon.list())
    return key not in FORBIDDEN_LE2

def admissible_extensions_front_T3(I1, I2):
    """
    Build all I with adjacency |I \cap I1| in {0,1}, then keep exactly those
    whose product with (I1, I2) has s0-length 3 by exclusion.
    """
    cand = [I for I in ALL_TRIPLES if len(I & I1) in (0,1)]
    out = []
    for I in cand:
        tup3 = (I, I1, I2)
        if not adj_gap1_ok(tup3):
            continue
        if is_level3_by_exclusion(tup3):
            out.append(I)
    return out

# --- Recursive builder: T_{n+1} from T_n ---
def build_T_next(Tn, *, strict_adjacency=True):
    """
    Input: Tn is a list of ordered tuples (each entry is a frozenset of size 3).
    Output: deduplicated list of ordered tuples of length n+1,
            modulo left/right W(A9) via canonicalization of Trans.
    """
    reps = {}
    for tup in Tn:
        n = len(tup)
        I1 = tup[0]
        # Special n=2 step: use the level 2 exclusion to decide true level 3
        if n == 2:
            cand = admissible_extensions_front_T3(I1, tup[1])
        else:
            cand = ALL_TRIPLES if not strict_adjacency else \
                   [I for I in ALL_TRIPLES if len(I & I1) in (0,1)]
        for I in cand:
            new_tup = (I,) + tup
            if strict_adjacency and not adj_gap1_ok(new_tup):
                continue
            M = Trans(new_tup)
            Mcanon = canonicalize_double_coset(M)
            key = tuple(Mcanon.list())
            if key not in reps:
                reps[key] = new_tup
    return list(reps.values())

# --- Example: build T3 from T2 ---
T3 = build_T_next(T2)
print("T3 size:", len(T3))
for t in T3:
    print(t)
\end{lstlisting}
}}

\end{appendices}

\bibliographystyle{plain}
\bibliography{biblio}
\end{document}